\documentclass[11pt,a4paper]{article}
\pdfoutput=1
\usepackage[margin=1in]{geometry}

\usepackage[utf8]{inputenc}
\usepackage[dvipsnames]{xcolor}
\usepackage[bookmarksnumbered,linktocpage,hypertexnames=false,colorlinks=true,linkcolor=NavyBlue,urlcolor=NavyBlue,citecolor=ForestGreen,anchorcolor=green,breaklinks=true,pagebackref=true,pdfusetitle]{hyperref}
\usepackage{bbm,braket,microtype,amsmath,amssymb,amsthm,amsfonts,dsfont,dynkin-diagrams,mathtools,colortbl,booktabs,algorithm,algpseudocode,graphicx,enumitem,xspace,float,mathdots,ellipsis,caption,subcaption,mleftright,multirow,adjustbox}
\usepackage[normalem]{ulem}
\usepackage[T1]{fontenc}
\usepackage{textcomp}
\usepackage[english]{babel}
\usepackage[capitalize,nameinlink]{cleveref}
\usepackage{tikz}
\usepackage{stmaryrd}
\usepackage{tikz-cd}
\usepackage{xparse}
\usepackage{thm-restate}

\newcommand{\ideal}[1]{\langle #1 \rangle}

\allowdisplaybreaks[4]
\newtheorem{theorem}{Theorem}[section]
\newtheorem{corollary}[theorem]{Corollary}\AddToHook{env/corollary/begin}{\crefalias{theorem}{corollary}}
\newtheorem{definition}[theorem]{Definition}\AddToHook{env/definition/begin}{\crefalias{theorem}{definition}}
\newtheorem{lemma}[theorem]{Lemma}\AddToHook{env/lemma/begin}{\crefalias{theorem}{lemma}}
\newtheorem{proposition}[theorem]{Proposition}\AddToHook{env/proposition/begin}{\crefalias{theorem}{proposition}}\crefname{proposition}{Proposition}{Propositions}
\AddToHook{env/problem/begin}{\crefalias{theorem}{problem}}
\AddToHook{env/assumption/begin}{\crefalias{theorem}{assumption}}
\AddToHook{env/claim/begin}{\crefalias{theorem}{claim}}
\AddToHook{env/conjecture/begin}{\crefalias{theorem}{conjecture}}
\newtheorem{remark}[theorem]{Remark}\AddToHook{env/remark/begin}{\crefalias{theorem}{remark}}
\newtheorem{example}[theorem]{Example}\AddToHook{env/example/begin}{\crefalias{theorem}{example}}
\AddToHook{env/question/begin}{\crefalias{theorem}{question}}
\newtheorem*{theorem*}{Theorem}
\newtheorem*{corollary*}{Corollary}
\newtheorem*{proposition*}{Proposition}
\AddToHook{cmd/appendix/after}{\crefalias{section}{appendix}}
\numberwithin{equation}{section}

\usepackage{aliascnt}
\makeatletter
\let\c@algorithm\relax 
\makeatother
\newaliascnt{algorithm}{theorem}

\makeatletter
\let\c@table\relax 
\makeatother
\newaliascnt{table}{theorem}

\newcommand{\PP}{\mathbb{P}}
\newcommand{\GG}{\mathbb{G}}

\newcommand{\ZZ}{\mathbb{Z}}

\newcommand{\NN}{\mathbb{Z}_{\geq 0}}
\newcommand{\KK}{\mathbb{K}}
\newcommand{\cR}{\mathcal{R}}
\newcommand{\cX}{\mathcal{X}}
\newcommand{\cP}{\mathcal{P}}

\newcommand{\cZ}{\mathcal{Z}}

\newcommand{\cH}{\mathcal{H}}
\newcommand{\cI}{\mathcal{I}}
\newcommand{\cO}{\mathcal{O}}
\newcommand{\xx}{\ensuremath{\boldsymbol{x}}}
\newcommand{\ff}{\ensuremath{\boldsymbol{f}}}
\newcommand{\GL}{\mathrm{GL}}
\newcommand{\SL}{\mathrm{SL}}

\newcommand{\dd}{\mathbf{d}}

\DeclareMathOperator{\poly}{poly}

\DeclareMathOperator{\codim}{codim}
\DeclareMathOperator{\rk}{rk}
\DeclareMathOperator{\Spec}{Spec}
\DeclareMathOperator{\Proj}{Proj}
\DeclareMathOperator{\Res}{Res}

\title{Small Resultant Systems via Linear Combinations}
\author{
M. Levent Do\u{g}an 
\thanks{Ludwig-Maximilians-Universit\"{a}t M\"{u}nchen, Germany. \url{mahmut.dogan@lmu.de} }
\and 
Elias Tsigaridas 
\thanks{Inria Paris and Sorbonne University, France.
\url{elias.tsigaridas@inria.fr}
}
\and 
Zafeirakis Zafeirakopoulos \thanks{Department of Mathematics, National and Kapodistrian University of Athens, Greece. \url{zzafeir@math.uoa.gr}}}
\date{}

\begin{document}
\maketitle
{\centering\footnotesize Dedicated to the memory of Winfried Bruns.\par}

\begin{abstract}
	For a system of $s$ homogeneous polynomials of degree $d$ in $n$ variables, say $\ff = 0$,
	we consider the problem of constructing \emph{resultant systems}.
	A resultant system is a finite set of polynomials in the coefficients of the input polynomials, the vanishing of which characterizes the systems $\ff$ with a common non-zero solution.
The classical approaches for constructing resultant systems rely either on maximal minors of large coefficient matrices or on the coefficients of a resultant of generic linear combinations of the input polynomials.
Typically, they produce resultant systems containing a very large number of polynomials.
    
We develop new constructions based on taking resultants of linear combinations of the input polynomials; 
this results in resultant systems of small cardinality.
Our main results are:
    \begin{itemize}
    \item We prove that a resultant system with $\binom{d+n-1}{n-1}s-n^2+1$ polynomials exists; each polynomial is the resultant of $n$ linear combinations of the input polynomials.
    This improves the previously known upper bounds, even for systems of bivariate homogeneous polynomials.
    \item Under the assumption that the input polynomials are non-zero,
    we construct explicit resultant systems with cardinality $\poly(s,d)$, when $n$ is fixed.
    \end{itemize}
\end{abstract}

\tableofcontents

\section{Introduction}
Resultant (systems) are central in elimination theory: they characterize when a system of polynomial equations has a solution.
Consider a system of $s$ homogeneous forms 
\begin{equation}
\label{eq:system}
\ff(\xx) = 0 \Leftrightarrow
\begin{cases}
    f_1(x_1, \dots, x_n) &\coloneqq \sum_{|\alpha|=d_1} c_{1,\alpha} \xx^{\alpha} =\,0 \\
    &\;\vdots  \\
    f_s(x_1, \dots, x_n) &\coloneqq \sum_{|\alpha|=d_s} c_{s,\alpha} \xx^{\alpha} =\,0
\end{cases} \enspace,
\end{equation}
in $n\geq 2$ variables,
each having degree $d_i\coloneqq \deg f_i$
and coefficients in an algebraically closed field $\KK$ of characteristic $0$.
We ask: 
\begin{center}
	\emph{When does the polynomial system $\ff(\xx) = 0$ in \eqref{eq:system}
	have a solution in  $\PP^{n-1}_{\KK}$?}
\end{center}

If $s< n$, then there is always a non-zero solution to \eqref{eq:system}.
So, without loss of generality, we will assume  that $s\geq n$.
Further assume that $d_1\geq d_2\geq\dots\geq d_s$.

We view the polynomials $\ff\coloneqq(f_1,\ldots,f_s)$ as a point in the affine space
\[
	\cP^{d_1,\ldots,d_s} := \prod\nolimits_{i=1}^s \KK[\xx]_{d_i};
\]
after choosing the standard monomial basis, this is equivalently a tuple of coefficient vectors.
Here, $\KK[\xx]_{d_i}$ denotes the vector space of homogeneous forms 
in $n$ variables of degree $d_i$. 
A standard incidence-variety argument,
see~\cref{prop:resultant_variety}, shows that the set of tuples 
$\ff\in\cP^{d_1,\ldots,d_s}$ admitting a common non-zero solution 
is an irreducible algebraic variety
\[
	\cR_{d_1,\ldots,d_s} \subseteq \cP^{d_1,\ldots,d_s};
\]
we call it the \emph{resultant variety}. Its codimension is $s-n+1$.

Consequently, there exists a finite collection of polynomials 
in the coefficients of $f_1,\ldots,f_s$ whose simultaneous vanishing
characterizes the existence of a common non-zero solution. 
When $s=n$, the resultant variety has codimension one and 
is a hypersurface: it is defined by a single polynomial;
this is the classical Macaulay resultant. 
When $s>n$, its codimension is greater than one, so, in general,
we need several polynomials to describe the resultant variety.

For every $i \in[s]$ and  multi-index $\alpha\in\ZZ_{\geq 0}^n$
with $|\alpha|=d_i$, let $y_{i,\alpha}$ be a variable corresponding
to the coefficient of the monomial $\xx^\alpha$ in $f_i$.
In this setting, for
$R \in \KK\bigl[y_{i,\alpha} \,:\, i\in[s],\ |\alpha| = d_i ]$
and a set of polynomials $\ff=(f_1,\ldots,f_s)$,
where $f_i=\sum_{|\alpha|=d_i}c_{i,\alpha}\xx^\alpha$,
we consider the evaluation
$R(\ff)\coloneqq R\bigl((c_{i,\alpha})_{i,\alpha}\bigr)$;
that is the specialization of the polynomials of the resultant system
to the coefficients of the input polynomials $f_i$.

The previous discussion motivates the following definition.

\begin{definition}[Resultant system]
	\label{def:resultant-system}
A finite collection of polynomials
\[
	R_1,\ldots,R_N
	\subset
	\KK\big[y_{i,\alpha}:i\in[s],\ |\alpha|=d_i\big]
\]
is called a \emph{resultant system} for $s$ homogeneous forms 
of degrees $d_1,\ldots,d_s$ in $n$ variables if, 
for every $\ff=(f_1,\ldots,f_s)\in\cP^{d_1,\ldots,d_s}$, we have
\[
	\cZ_{\PP}(f_1,\ldots,f_s)\neq\varnothing
	\quad\Longleftrightarrow\quad
	R_1(\ff)=\cdots=R_N(\ff)=0.
\]
\end{definition}

In other words, a resultant system is a 
collection of polynomials whose common zero set 
in the coefficient space is exactly the resultant variety, \emph{set-theoretically}. 
The classical constructions of resultant systems 
may produce a very large number of such polynomials, 
either as maximal minors of coefficient matrices or 
as the coefficients of a resultant involving (a suitable number of) 
generic linear combinations.
It is therefore natural to ask what the
smallest sufficient number of these polynomials is.
Our main goal is to construct resultant systems 
containing a \emph{small} number of polynomials,
that in turn we compute as classical resultants of suitable linear combinations of the original forms.

\begin{example}
\label{ex:maximal-minors}
    Suppose $d_1 = \dots = d_s = 1$. Then, $f_1,\dots,f_s$ form a homogeneous linear system in $n$ variables:\[
    \begin{split}
    f_1(\xx) &= c_{1,1} x_1 + c_{1,2} x_2 + \dots + c_{1,n} x_n\\
    &\;\;\vdots\\
    f_s(\xx) &= c_{s,1} x_1 + c_{s,2} x_2 + \dots + c_{s,n} x_n
    \end{split}
    \]
    In this case, the resultant variety coincides with the determinantal variety of $s\times n$ matrices of rank at most $n-1$.
    That is, the system has a non-zero solution if and only if the matrix $C\coloneqq [c_{ij}]_{i\in [s], j \in [n]}$ has $\rk C<n$.
    
    Consequently, the resultant variety is defined by the $\binom{s}{n}$ maximal minors of $C$. 
    Intriguingly, there is a substantially smaller resultant system:
    Bruns and Schw\"{a}nzl \cite{Bruns-89,Bruns-Schwanzl-90} proved that a collection of $N\coloneqq (s-n)n+1$ polynomials is necessary and sufficient.
    Namely, there exists a collection $R_1,\dots,R_{N}$ of polynomials in variables $c_{i,j}, i\in [s], j\in [n]$ such that $\rk C<n$ if and only if $R_\ell(C)=0$,
    for all $\ell=1,\dots,N$.
\end{example}

\begin{example}
\label{ex:macaulay}
A classical result due to Macaulay \cite{Macaulay-02,Macaulay-16} states that if \eqref{eq:system} is a square system, i.e., if $s=n$, then the resultant variety is a hypersurface whose defining equation is \emph{Macaulay's resultant} $\Res(f_1,f_2,\dots,f_n)$: \[
 \ff = 0 \text{ has a non-zero solution}\quad\iff\quad\Res(f_1,f_2,\dots,f_{n}) = 0.
\] 
Furthermore, in this case, the resultant system consists of a single multihomogeneous polynomial, whose degree in the coefficients of $f_i$ is $\prod_{j\neq i} d_j$, see e.g. \cite[Theorem~(3.1)]{Cox-Little-Oshea-05}.
If $n=s=2$, i.e., in the case of two homogeneous bivariate polynomials $f_1 = \sum_{\ell=0}^d a_{\ell} x^\ell y^{d-\ell}$ and $f_2 = \sum_{\ell=0}^e b_{\ell} x^{\ell} y^{e-\ell}$, 
Macaulay's resultant has the following simple form as the determinant of the $(d+e)\times (d+e)$ \emph{Sylvester matrix} whose entries are the coefficients of $f_1,f_2$: \[
\Res(f_1,f_2) \;\coloneqq \;\begin{vmatrix}
    a_0 & a_1 & \dots & a_d & 0 & \dots & 0\\
    0 & a_0 & a_1 & \dots  & a_d & \dots & 0\\
    \vdots & \vdots & \ddots & \ddots & \ddots & \ddots & \vdots\\
    0 & \dots & 0 & a_0 & a_1 & \dots & a_d\\
    b_0 & b_1 & \dots & b_e & 0 & \dots & 0\\
    0 & b_0 & b_1 & \dots  & b_e & \dots & 0\\
    \vdots & \vdots & \ddots & \ddots & \ddots & \ddots & \vdots\\
    0 & \dots & 0 & b_0 & b_1 & \dots & b_e
\end{vmatrix}
\]
This polynomial $\Res(f_1,f_2)$ is the \emph{Sylvester resultant} of two forms in two variables \cite{Sylvester-40}.
It is a bihomogeneous polynomial in the coefficients of $f_1,f_2$ of multidegree $(e,d)$.
\end{example}

We give a brief overview of our contributions so that it is easier for the reader to compare our results with the existing and related works presented in Section~\ref{sec:prev-works}.
We elaborate in more detail on our contribution in Section~\ref{sec:main-results}.
 
If $s>n$, the resultant variety is no longer a hypersurface, so we cannot  describe it as the zero locus of a single polynomial.
We construct resultant systems significantly smaller than previously known.
Our main results, informally, are:
\begin{itemize}
    \item For $s$ homogeneous polynomials of uniform degree $d$, we prove the existence of a resultant system of cardinality $\binom{d+n-1}{n-1}s-n^2+1$ (\cref{thm:main-optimal}).
    \item For the case of linear systems, that is $d=1$, we present a new description of a set of $(s-n)n+1$ polynomials that define the determinantal variety of rank deficient matrices (\cref{cor:linear-resultant-system}).
    \item For bivariate homogeneous polynomials ($n=2$) we demonstrate that our constructions give resultant systems with cardinality that matches, up to a multiplicative constant, a known  lower bound by Lyubeznik~\cite{Lyubeznik-95}.
    \item For the general case, we construct an explicit \emph{punctured resultant system}, i.e., a resultant system under the assumption that $f_\ell\neq 0$, for all $\ell=1,2,\dots,s$.
    It contains $\poly(s,d)$ polynomials, when $n$ is fixed (\cref{thm:intro-explicit-multivariate}).
    For bivariate homogeneous systems, this construction consists of $(s-2)d+1$ polynomials~(\cref{thm:bivariate-explicit}).
\end{itemize}

\subsection{Previous and related work}
\label{sec:prev-works}
We now summarize the previous work on resultant systems. 

\subsubsection{Bivariate systems}

First, we focus on systems of forms in two variables, that is:
\begin{equation}
	\label{eq:biv-system}
	\ff(\xx) = 0 \Leftrightarrow
\begin{cases}
    f_1(x_1, x_2) &\coloneqq \sum_{\ell=0}^{d_1} c_{1,\ell} x_1^{d_1-\ell}x_2^\ell =\,0 \\
    &\;\vdots  \\
    f_s(x_1, x_2) &\coloneqq \sum_{\ell=0}^{d_s} c_{s,\ell} x_1^{d_s-\ell}x_2^\ell =\,0
\end{cases} 
\enspace .
\end{equation}

The problem of constructing a resultant system for bivariate homogeneous systems was considered by Kapferer \cite{Kapferer-29} (see also van der Waerden \cite{vdWaerden-40} and Hodge-Pedoe \cite{Hodge-Pedoe-94}).
He showed that one can always reduce to the equal--degree case $d_1=d_2=\dots=d_s=d$, by replacing each polynomial $f_i, i=2,\dots,s$ with the pair $x_1^{d_1-d_i}f_i, x_2^{d_1-d_i}f_i$.
A moment's thought shows that this does not change the solution set of the original system.

Then, he introduces $2s$ new variables, namely $\lambda_1,\dots,\lambda_s$ and $\mu_1,\dots,\mu_s$, and  expands the Sylvester resultant (\cref{ex:macaulay}) of two generic linear combinations $\sum_i\lambda_i f_i, \sum_i \mu_i f_i$ in the variables $\lambda_i,\mu_i$: \[
\Res\left( \lambda_1 f_1 + \dots + \lambda_s f_s,\; \mu_1 f_1 + \dots + \mu_s f_s \right) = \sum_{|\alpha|=|\beta|=d} R_{\alpha,\beta}(c_{i,j}) \, \lambda_1^{\alpha_1}\dots\lambda_s^{\alpha_s} \, \mu_1^{\beta_1}\dots \mu_s^{\beta_s}.
\] Kapferer proved that the coefficients $R_{\alpha,\beta}, |\alpha|=|\beta|=d$, form a resultant system.
This approach yields a resultant system of cardinality $\binom{d+s-1}{s-1}^2$, since this is the number of coefficients of a generic bihomogeneous polynomial of degree $(d,d)$ in $2s$ variables.\footnote{We remark that some repetitions and cancellations can occur. For example, when $d$ is odd we have for every $\alpha$ and $\beta$, $R_{\alpha,\alpha}$ is identically zero; whereas $R_{\alpha,\beta}=- R_{\beta,\alpha}$. 
We are not aware of a general cancellation rule that reduces the number of polynomials in a substantial way.}
We note that this number is exponential in $d$ and/or $s$.

An alternative method is given in \cite{Orsinger-51,Kakie-76}.
In this construction, one obtains a resultant system as the set of maximal minors of a \emph{generalized Sylvester matrix}.
We denote this matrix by $S_r(\ff)$, 
where $r$ is a natural number~$r\geq d_1$. 
It has dimension $\left(s(r+1)-\sum_{i=1}^s d_i\right)\times (r+1)$
and is defined as follows:
\begin{equation}
	S_r(\ff)\coloneqq \begin{pmatrix}
		c_{1,0} & c_{1,1} & \dots & c_{1,d_1} & &  \\
		 & \ddots & \ddots & & \ddots & \\
		& & c_{1,0} & c_{1,1} & \dots & c_{1,d_1} \\
		\vdots & \vdots & \vdots & \vdots & \vdots & \vdots\\
		c_{s,0} & c_{s,1} & \dots & c_{s,d_s} & & \\
		 & \ddots & \ddots & & \ddots & \\
		 & & c_{s,0} & c_{s,1} & \dots & c_{s,d_s} 
	\end{pmatrix} 
	\enspace,
\end{equation}
where the coefficients of each $f_i$ are repeated in $r-d_i+1$ rows and the blank spaces are filled with zeros.
If $s=2$ and $r=d_1+d_2-1$, then $S_r(\ff)$ is simply the Sylvester matrix of $f_1$ and $f_2$.
Orsinger \cite{Orsinger-51} proved that the maximal minors of $S_r(\ff)$ form a resultant system for $r = 2d_1 -1$.
Kaki\'{e} \cite{Kakie-76} proved that $r=d_1+d_s-1$ suffices.
In particular, for any admissible value of $r$ in these results, 
the polynomials in \eqref{eq:biv-system} have a common non-zero solution if and only if the generalized Sylvester matrix is such that $\rk(S_r(\ff))<r+1$.
In the most recent addition in this line of work, 
Conner et al.~\cite{CMSS-25}  proved, in the equal--degree case, that the maximal minors of $S_r(\ff)$ for $r=d,\dots,2d-1$ define $\cR_{d,\dots,d}$ \emph{ideal theoretically}, i.e., they generate the ideal of the resultant variety.

We assume for simplicity that $d_1=\dots=d_s=d$ and we analyze the cardinality of Orsinger and Kaki\'{e}'s resultant system.
In this case, we have $r=2d-1$ and the generalized Sylvester matrix $S_{2d-1}(\ff)$ has $\binom{sd}{2d}$ maximal minors. 
For sufficiently large $s$ and $d$, this number is substantially larger than the number of polynomials obtained by Kapferer's construction.

However, it is possible to reduce the cardinality using the results of \cite{Bruns-89,Bruns-Schwanzl-90}:\footnote{We have not encountered this observation in the literature.} 
Recall from \cref{ex:maximal-minors} that there are $(sd)(2d)-(2d)^2+1=(2s-4)d^2+1$ polynomials that cut out the determinantal variety of $(sd)\times(2d)$ matrices of rank $<2d$.
An application of this result to $S_{2d-1}(\ff)$ yields a resultant system of cardinality $\cO(s d^2)$.

Lyubeznik \cite{Lyubeznik-95} investigated lower bounds for resultant systems of bivariate polynomials.
Assuming $d_1\geq d_2\geq\dots\geq d_s$, he proved a lower bound \begin{equation}
\label{eq:lyubeznik-lower-bound}
N_s \coloneqq 1 + \sum\nolimits_{i=3}^s (d_i + 1)
\end{equation} when $d_1 \neq d_3$. 
Furthermore, he proved that a resultant system with $N_s$ polynomials exists if $s=3$, or $d_2=\dots=d_s=d_1-1$ and each $f_i$ is assumed to be non-zero.
Lyubeznik's construction of resultant systems consists of polynomials of the form \begin{equation}
\label{eq:lyubeznik-system}
\Res(\,\lambda^{(i)}_1 f_1+\dots+\lambda^{(i)}_s f_s,\; \mu^{(i)}_1 f_1 + \dots + \mu^{(i)}_s f_s\,)
\end{equation} for an explicit collection of coefficients $(\lambda^{(i)},\mu^{(i)}), i=1,2,\dots, N_s$.
Lyubeznik asked if a resultant system of cardinality $N_s$ always exists \cite[Questions 1 and 2]{Lyubeznik-95}.
By building upon Lyubeznik's construction, Encarnaci\'{o}n \cite{Encarnacion-98} proved that an explicit resultant system of the form \eqref{eq:lyubeznik-system} with $\cO(s^2d)$ polynomials exists.

We observe that there is a gap between the lower bound and the upper bounds: 
Encarnaci\'{o}n's \cite{Encarnacion-98} construction and Orsinger-Kaki\'{e} \cite{Orsinger-51,Kakie-76} construction via maximal minors of $S_{2d-1}(\ff)$ reinforced by the Bruns-Schw{\"a}nzl reduction in the number of maximal minors yield resultant systems of cardinalities $\cO(s^2d)$ and $\cO(sd^2)$, respectively.
However, Lyubeznik's \cite{Lyubeznik-95} lower bound is $\Omega(sd)$.

In \cref{cor:bivariate,thm:bivariate-explicit} below, we close this gap by demonstrating the existence of resultant systems for $n=2$ of cardinality $\cO(sd)$.
These resultant systems also have the form \eqref{eq:lyubeznik-system}, for a collection of appropriately chosen $\lambda^{(i)},\mu^{(i)}$.

\subsubsection{Systems with more than two variables}
For general $n\geq 2$, Kapferer \cite{Kapferer-29} constructed a resultant system based on an inductive procedure that eliminates one variable at each step.\footnote{According to van der Waerden \cite{vdWaerden-40} and Orsinger \cite{Orsinger-51}, this method is known as \emph{Kronecker's method of elimination}. }
Following the presentation of Lang~\cite{Lang-12}, 
this construction introduces a new variable $t$ and constructs a new system 
\[
    \mathbf{g}(\xx, t) = 0 \enspace, \qquad
    \text{ where }\,
    g_i(x_1,\dots,x_n,t) \coloneqq f_i(t x_1, \dots, t x_{n-1}, x_n), \quad \, \text{for }\, i =1,2,\dots,s.
\] A moment's thought shows that \eqref{eq:system} has a non-zero solution if and only if $g_1,\dots,g_s$ has a solution $(x_1,\dots,x_n,t)$ with $(x_1,\dots,x_{n-1})\neq 0$ and $(x_n,t)\neq 0$.
This suggests the following algorithm to compute a resultant system:
View $g_1,\dots,g_s$ as a system of bivariate forms in the variables $(x_n,t)$ and coefficients in $\KK[x_1,\dots,x_{n-1}]$, then eliminate $x_n$ and $t$ by using a resultant system for bivariate polynomials.
The resulting system consists of polynomials in the variables $x_1,\dots,x_{n-1}$, which has a solution with $(x_1,\dots,x_{n-1})\neq 0$ if and only if the original system has a non-zero solution.
By eliminating one variable at each step, one can construct a resultant system for forms in $n$ variables.
Unfortunately, at each step, the degree of the polynomials is squared.
Thus, even if we choose the resultant system for bivariate systems of the smallest cardinality, this method yields a resultant system that consists of a doubly exponential, in $n$, number of polynomials, with degrees up to $\cO(d^{2^{n-1}})$, when $n$ is fixed. 

Yap \cite{Yap-99} presents a different construction,
based on the maximal minors of a coefficient matrix.
Let $D_1,\dots,D_s,D$ be positive integers, 
such that $\forall i, D_i + d_i =D$.
We consider the map\footnote{When $s=n$ and $D=d_1+\dots+d_n+n-1$, this matrix is known as the \emph{Macaulay matrix}. 
In this case, the greatest common divisor of the maximal minors of $M$ equals Macaulay's resultant, see \cite[\S 1]{Macaulay-16}.} \[
M : \prod_{\ell=1}^s \KK[\xx]_{D_{\ell}} \rightarrow \KK[\xx]_D, \quad (q_1,\dots,q_s) \mapsto \sum_{\ell=1}^s f_\ell q_\ell \enspace.
\] For $n=2$, the matrix of $M$ in the standard monomial basis coincides with the transpose of the generalized Sylvester matrix $S_D(\ff)$.
For general $n$ in the equal--degree case $d_1=\dots=d_s=d$, $M$ is a $\binom{D-d+n-1}{n-1}s$ by $\binom{D+n-1}{n-1}$ matrix, whose entries are the coefficients $c_{i,\alpha}$.
Yap proved that the maximal minors of $M$ form a resultant system whenever $D\geq 1+n(B-1)$ where $B=\cO(d^n)$ is the \emph{effective Nullstellensatz bound}, see \cite[Lecture XI, Theorem 13]{Yap-99}. 
When $n$ is fixed, this yields a resultant system with $s^{\cO(d^{n(n-1)})}$ equations of degrees $\cO(d^{n(n-1)})$.

Abramov \cite{Abramov-13} generalized Kapferer's method of obtaining a resultant system as the coefficients of a resultant of generic linear combinations.
As before, we will assume without loss of generality that $d_1=\dots=d_s=d$, by replacing each $f_i, i=2,\dots,s$ with $n$ polynomials $x_1^{d_1-d_i}f_i, x_2^{d_1-d_i}f_i,\dots, x_n^{d_1-d_i}f_i$.  

We consider the $n\times s$ matrix $\Lambda$ whose entries are new variables $\Lambda_{ij}, i\in [n], j\in [s]$. 
Abramov proved that the coefficients of the expansion of Macaulay's resultant of $n$ linear combinations of $f_1,f_2,\dots,f_s$ in the variables $\Lambda_{ij}$ form a resultant system:
\begin{equation}
\label{eq:abramov-resultant}
    \Res\left(\, \Lambda_{11}f_1 + \dots + \Lambda_{1s}f_s ,\, \dots ,\, \Lambda_{n1}f_1 + \dots + \Lambda_{ns} f_s \,\right) = \sum_{\forall \ell, |\alpha_\ell|=d^{n-1}} R_{\alpha_1,\dots,\alpha_n} \, \Lambda_{1,\cdot}^{\alpha_1}\, \Lambda_{2,\cdot}^{\alpha_2}\,\dots\,\Lambda_{n,\cdot}^{\alpha_n},
\end{equation} 
where $\Lambda_{i,\cdot}$ denotes the collection of the variables $\Lambda_{ij}, j=1,\dots,s$ in the $i$-th row of $\Lambda$.

\begin{restatable}[{\cite{Abramov-13}}]{theorem}{abramov}
\label{thm:intro-abramov}
    The polynomials $R_{\alpha_1,\dots,\alpha_n}$ in \eqref{eq:abramov-resultant} form a resultant system for $s$ homogeneous forms of degree $d$ in $n$ variables.
\end{restatable}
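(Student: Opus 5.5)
The plan is to prove both directions of the equivalence in \cref{def:resultant-system} by looking at the polynomial $P_{\ff}(\Lambda) \coloneqq \Res(\Lambda_{1,\cdot}\cdot\ff,\dots,\Lambda_{n,\cdot}\cdot\ff)$ in the entries of $\Lambda$. Here $\Lambda_{i,\cdot}\cdot\ff=\sum_j\Lambda_{ij}f_j$. By \eqref{eq:abramov-resultant}, all $R_{\alpha_1,\dots,\alpha_n}(\ff)$ vanish if and only if $P_{\ff}$ is the zero polynomial in $\KK[\Lambda]$. Since $\KK$ is infinite, this happens if and only if $P_{\ff}(\Lambda)=0$ for every $\Lambda\in\KK^{n\times s}$. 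So the statement reduces to the following claim: $\ff$ has a common zero in $\PP^{n-1}$ if and only if, for \emph{every} $\Lambda\in \KK^{n\times s}$, the $n$ forms $g_i=\sum_j \Lambda_{ij}f_j$ have a common zero in $\PP^{n-1}$. We use Macaulay's theorem (\cref{ex:macaulay}) in the square case for this reduction.

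The forward direction is immediate. A common zero $\xi$ of $f_1,\dots,f_s$ is a common zero of every $g_i$, so $\Res(g_1,\dots,g_n)=0$ for all $\Lambda$. Hence $P_{\ff}\equiv 0$, and all coefficients vanish.

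For the converse we argue by contraposition. Suppose $\cZ_{\PP}(f_1,\dots,f_s)=\varnothing$; we must produce one $\Lambda$ such that the $n$ generic linear combinations have no common projective zero. This is a dimension count using a general-position argument, done one combination at a time. Let $V=\operatorname{span}(f_1,\dots,f_s)\subseteq\KK[\xx]_d$, a linear system whose base locus is empty. We choose $g_1,\dots,g_n\in V$ inductively so that $Z_k\coloneqq\cZ_{\PP}(g_1,\dots,g_k)$ has every irreducible component of dimension exactly $n-1-k$ (for $k=n$ this means $Z_n=\varnothing$). At step $k+1$, $Z_k$ has finitely many irreducible components $W_1,\dots,W_m$ of dimension $n-1-k\ge 0$. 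For each $W_t$, the set of $g\in V$ vanishing identically on $W_t$ is a linear subspace $V_t$. It is proper, because otherwise every $f_j$ vanishes on $W_t\neq\varnothing$, contradicting the empty base locus. Since $\KK$ is infinite, $V$ is not a finite union of proper subspaces, so we can pick $g_{k+1}\in V\setminus\bigcup_t V_t$. By Krull's principal ideal theorem (projective dimension theorem), $g_{k+1}$ cuts each $W_t$ in pure dimension $n-2-k$, or in the empty set when $\dim W_t=0$. This gives the property for $k+1$. After $n$ steps $Z_n=\varnothing$, so $\Res(g_1,\dots,g_n)\neq 0$. Writing $g_i=\sum_j\Lambda_{ij}f_j$ gives $P_{\ff}(\Lambda)\neq 0$, hence some $R_{\alpha_1,\dots,\alpha_n}(\ff)\neq 0$.

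The only delicate step is this inductive general-position choice. In particular we must make sure every component of dimension $0$ is killed, which is exactly the case $\dim W_t=0$ handled above. The rest is bookkeeping about the equal-degree reduction: if the original degrees differ, the paper's replacement of $f_i$ by $x_k^{d_1-d_i}f_i$ preserves the zero set in $\PP^{n-1}$, so it suffices to treat the case $d_1=\dots=d_s=d$. Expanding the resultant as a polynomial in $\Lambda$ is legitimate because $\Res$ is a polynomial in the coefficients of the $g_i$, which are linear in $\Lambda$, and it has multidegree $d^{n-1}$ in each row, as stated in \cref{ex:macaulay}.
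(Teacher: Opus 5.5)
Your proof is correct and has the same skeleton as the paper's. The forward direction is identical. The converse reduces, exactly as in the paper, to exhibiting a single $\Lambda$ for which the square system $\Lambda\cdot\ff$ has no common projective zero.

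The difference lies in how that $\Lambda$ is obtained. The paper simply invokes Perron's theorem (\cref{thm:perron}), which asserts something stronger: for generic $\Lambda$ one has $\cZ_{\PP}(\Lambda\cdot\ff)=\cZ_{\PP}(\ff)$, even when this set is nonempty. You instead prove only the special case actually needed, namely empty base locus. You do this by the standard inductive general-position argument: avoid the finitely many proper subspaces $V_t$ of forms vanishing on a component of $Z_k$, then use Krull's principal ideal theorem to drop the dimension of every component by one.

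Your route makes the theorem self-contained and elementary at the cost of a paragraph. The paper's route is shorter and yields genericity of a good $\Lambda$ directly. In your setting genericity also comes for free, since $\{\Lambda : P_{\ff}(\Lambda)\neq 0\}$ is Zariski open and you have shown it is nonempty.
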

\noindent
In \cref{sec:prelim-system}, we will give a simple proof of \cref{thm:intro-abramov} using Perron's theorem \cite{Perron-41}.

We observe that the number of $R_{\alpha_1,\dots,\alpha_n}$'s coincides with the number of coefficients of a generic multihomogeneous form of multidegree $(d^{n-1},\dots,d^{n-1})$ in $sn$ variables.
Hence, Abramov's resultant system contains $\binom{d^{n-1}+s-1}{s-1}^{n}$ polynomials.
This number is exponential in $d$ and/or $s$ even when $n$ is fixed.
Our aim is to reduce this number by considering resultants of the form \eqref{eq:abramov-resultant} for specific evaluations at matrices $\Lambda_1,\dots,\Lambda_N\in\KK^{n\times s}$, for a carefully chosen $N$.

\subsection{Main results}
\label{sec:main-results}

We present in detail our contributions. 
We make the assumption that $d_1=\dots=d_s=d$.
Recall that this is a harmless assumption since we can always replace the polynomials $f_i, i=2,\dots,s$ with $x_1^{d-d_i}f_i,\dots,x_n^{d-d_i}f_i$.
This increases the number of polynomials by at most $(s-1)(n-1)$.

Our first and  main theorem  supports the existence of a resultant system with cardinality $\binom{d+n-1}{n-1}s-n^2+1$. 
We remark that this number is $\Theta(s d^{n-1})$ for constant $n$.
To the best of our knowledge, this is the first resultant system in the literature that is not exponentially large in $s$ or $d$ for $n>2$.
\begin{restatable}{theorem}{mainthm}
\label{thm:main-optimal}
	Suppose $s>n$ and $d_1=\dots=d_s=d$.
	If $\cH\subset\KK^{n\times s}$ is a generic set of $n\times s$-matrices of cardinality $|\cH|\geq \binom{d+n-1}{n-1}s-n^2+1$, then the following collection of polynomials 
	\begin{equation}
    \label{eq:resultant-system}
	\big\{\,\Res(\Lambda_{11}f_1+\dots+\Lambda_{1s}f_s\,,\, \dots,\, \Lambda_{n1}f_1+\dots+\Lambda_{ns}f_s)\;\mid\; \Lambda\in\cH\,\big\}
	\end{equation}  
	is a resultant system for $\ff = 0$ \eqref{eq:system}.
\end{restatable}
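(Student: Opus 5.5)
Our plan is a dimension count on an incidence variety, with Abramov's theorem (\cref{thm:intro-abramov}) as the starting point. For $\ff\in\cP^{d,\ldots,d}$, let $B_\ff\subseteq\KK^{n\times s}$ be the set of matrices $\Lambda$ with $\Res(\Lambda\ff)=0$, where $\Lambda\ff$ denotes the $n$-tuple of linear combinations. If $\ff\in\cR_{d,\ldots,d}$, a common zero of the $f_i$ is a common zero of every combination, so $B_\ff=\KK^{n\times s}$. One direction is therefore immediate: the polynomials in \eqref{eq:resultant-system} vanish on the resultant variety. For the converse, \cref{thm:intro-abramov} says that $\ff\notin\cR$ exactly when the polynomial $\Lambda\mapsto\Res(\Lambda\ff)$ is not identically zero. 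In that case $B_\ff$ is a hypersurface, or empty, in $\KK^{n\times s}$. We must show that a generic finite set $\cH$ of the stated size cannot lie entirely in $B_\ff$ for any $\ff\notin\cR$.

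The key reduction is to exploit the symmetries of $B_\ff$. If $g\in\GL_n$ and $\Lambda\in B_\ff$, then $g\Lambda\in B_\ff$, because the combinations $g\Lambda\ff$ span the same linear system and the Macaulay resultant vanishes iff the forms have a common zero. Replacing $\ff$ by $A\ff$ with $A\in\GL_s$ similarly transforms $B_\ff$ into $B_\ff A^{-1}$. The condition $\Lambda\in B_\ff$ really depends only on the pair consisting of the subspace $V=\operatorname{span}(f_1,\ldots,f_s)\subseteq\KK[\xx]_d$ (together with how $\ff$ parametrises it) and the row space of $\Lambda$. Concretely, $\Res(\Lambda\ff)=0$ iff the $n$ forms $\Lambda\ff$ have a common zero $p\in\PP^{n-1}$. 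So we consider the incidence variety
\[
\mathcal{I}=\{(\ff,\Lambda_1,\ldots,\Lambda_N,p_1,\ldots,p_N)\;:\;\ff\notin\cR,\ (\Lambda_j\ff)(p_j)=0\ \forall j\}.
\]
We aim to show that its projection to $(\KK^{n\times s})^N$ is not dominant once $N\geq\binom{d+n-1}{n-1}s-n^2+1$. Then a generic $\cH$ lies outside the closure of the image, and the theorem follows.

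The dimension count goes as follows. Set $D=\binom{d+n-1}{n-1}$, so that $\dim\cP=Ds$. Fix $\ff$ and $p$ with $\ff(p)\neq0$, which holds automatically since $\ff\notin\cR$. Then the vector $\ff(p)\in\KK^s$ is nonzero, and the condition $\Lambda\ff(p)=0$ consists of $n$ independent linear conditions on $\Lambda$. Hence for each fixed $\ff$, the set $B_\ff$ is the image of an $(ns-n)+(n-1)=ns-1$ dimensional family, i.e.\ it is a hypersurface. This naive count gives $\dim\mathcal{I}\le Ds+N(ns-1)$, which yields non-dominance only when $N>Ds$. We gain the extra $n^2-1$ by quotienting by symmetries. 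The $\GL_n$ action on $\KK^{n\times s}$ changes nothing about the conditions, so it is better to pass to the Grassmannian of row spaces; but the clean saving comes from $\GL_n$ acting on the variables $\xx$. Substituting $\ff\mapsto\ff\circ g$ and $p_j\mapsto g^{-1}p_j$ preserves $\mathcal{I}$ and fixes every $\Lambda_j$. Thus the fibres of $\mathcal{I}\to(\KK^{n\times s})^N$ contain $\GL_n$-orbits, which generically have dimension $n^2$ as soon as $\ff$ is not fixed by a positive-dimensional group. This gives the bound $\dim(\text{image})\le Ds+N(ns-1)-n^2$. Non-dominance, i.e.\ this being less than $Nns$, requires $N>Ds-n^2$, which is exactly the stated bound $N\ge Ds-n^2+1$.

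The main obstacle is making this last step rigorous. The difficulty is the locus of $\ff$ with a positive-dimensional stabilizer (for instance $\ff$ built from fewer variables), where the orbit dimension drops below $n^2$. We plan to handle this by stratifying $\cP\setminus\cR$ by the dimension of the stabilizer in $\GL_n$ acting on $\cP$. For a stratum whose orbits have dimension $n^2-k$, we will show that the stratum itself has codimension at least $k$ in $\cP$, using the fact that $\ff\notin\cR$ forces $s\ge n$ forms without common zero. This codimension bound compensates for the smaller orbits. We will also verify that the fibres over each $(\ff,\Lambda_j)$ in the $p_j$-direction are finite generically, via the $\Lambda$-generic finiteness of $\cZ(\Lambda\ff)$. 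We expect the stabilizer-stratum estimate to be the delicate part.
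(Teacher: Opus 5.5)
Your argument is correct, and it takes a genuinely different route from the paper. The paper works \emph{downstairs}. It constructs the GIT quotient $\cX=\PP(\cP^{d,\dots,d})\sslash\SL(n)$, which requires Hilbert--Nagata, Luna's theorem for generic stability, and Hoskins' Theorem~4.11. It then computes $\dim\cX=\binom{d+n-1}{n-1}s-n^2$ and runs the incidence count over the open set $V=\phi(U)\subset\cX$. That count needs the $\SL(n)$-invariance of $\Res(\Lambda\cdot\ff)$ (\cref{lem:resultant-invariance}) to make the incidence well defined, and Perron's theorem to get fibers of codimension exactly $k$.

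You instead stay \emph{upstairs} on $\cP\setminus\cR$. The auxiliary points $p_j$, using only $\ff(p)\neq0$, make $\mathcal{I}$ a vector bundle of rank $N(ns-n)$ over the irreducible variety $(\cP\setminus\cR)\times(\PP^{n-1})^N$. So $\mathcal{I}$ is irreducible of dimension exactly $Ds+N(ns-1)$ without Perron's theorem; in fact Perron's theorem falls out of your argument as a by-product. The saving of $n^2$ comes from the fibers of $\pi\colon\mathcal{I}\to(\KK^{n\times s})^N$ containing the $\GL_n$-orbits $(\ff\circ g,\Lambda,g^{-1}p)$, and this needs no invariance of the resultant. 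Your route therefore uses only the fiber dimension theorem and finiteness of stabilizers. The paper's route buys the conceptual reading of the bound as $\dim\cX+1$, the analogue of $\dim\GG(n,s)+1$ in the linear case.

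The step you expect to be delicate is not needed. $\mathcal{I}$ is irreducible and local fiber dimension is upper semicontinuous (\cref{thm:harris-dim}). So it suffices that the fibers have local dimension at least $n^2$ on a dense set of points, and that follows from generic finiteness of stabilizers. The proof of \cref{lem:stabilizer} with $(x_1^d,\dots,x_n^d,0,\dots,0)$ works verbatim in $\GL_n$. Hence $\dim\overline{\pi(\mathcal{I})}\le\dim\mathcal{I}-n^2$ without any stratification: special strata cannot enlarge the irreducible closure of the image.

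In fact those strata are empty on $\cP\setminus\cR$:
\begin{itemize}
\item If the stabilizer of $\ff$ contains a torus acting in suitable coordinates as $\mathrm{diag}(t^{a_1},\dots,t^{a_n})$ with some $a_i\neq0$, then $x_i^d$ has nonzero weight. So $x_i^d$ occurs in no $f_\ell$, and $e_i$ is a common zero.
\item If the stabilizer contains $\exp(tN)$ with $N\neq0$ nilpotent, take $v$ with $Nv\neq0=N^2v$. Then $f_\ell(v+tNv)$ is constant in $t$, which forces $f_\ell(Nv)=0$ for all $\ell$.
\end{itemize}
Finiteness of the fibers in the $p_j$-direction is likewise irrelevant for an upper bound on the image.
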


The cardinality $\binom{d+n-1}{n-1}s-n^2+1$ is not a coincidence:
it equals the dimension of the projective GIT quotient of the space of $s$-tuples of forms of degree $d$ by the natural $\SL(n)$-action via change of variables (plus one); 
see \cref{sec:git} for further details.

If $d=1$, then the cardinality of the resultant system 
of \cref{thm:main-optimal} coincides with the number of polynomials that define the determinantal variety of rank deficient $s\times n$-matrices (\cref{ex:maximal-minors}).
In this setting, an application of \cref{thm:main-optimal} is as follows:
\begin{restatable}{corollary}{linearcor}
\label{cor:linear-resultant-system}
    Suppose that $s>n$ and $\Lambda_1,\Lambda_2,\dots,\Lambda_{N}\in\KK^{n\times s}$ is a generic collection of $N\coloneqq sn-n^2+1$ matrices.
    Then, for every $C\in\KK^{s\times n}$ we have \[
    \rk C < n \quad\iff\quad \forall i\in [N], \; \det(\Lambda_i C)=0.
    \]
\end{restatable}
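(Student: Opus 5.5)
We derive the corollary by specializing \cref{thm:main-optimal} to $d=1$, and then translating the resultants into determinants. With $d=1$ we have $\binom{d+n-1}{n-1}=\binom{n}{n-1}=n$, so the bound in \cref{thm:main-optimal} becomes $ns-n^2+1=N$. Hence, for a generic collection $\cH=\{\Lambda_1,\dots,\Lambda_N\}\subset\KK^{n\times s}$, the polynomials $\Res(\Lambda_{i,1\cdot}\ff,\dots,\Lambda_{i,n\cdot}\ff)$, $i\in[N]$, form a resultant system for $s$ linear forms in $n$ variables. Here $\Lambda_{i,k\cdot}\ff$ denotes $\sum_j (\Lambda_i)_{kj}f_j$.

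Next we identify both sides of the equivalence in the corollary. For a matrix $C\in\KK^{s\times n}$, let $f_j(\xx)=\sum_k c_{jk}x_k$ be the $j$-th row of $C$ applied to $\xx$. By \cref{ex:maximal-minors}, $\cZ_{\PP}(f_1,\dots,f_s)\neq\varnothing$ if and only if $\rk C<n$. The linear combinations $g_k=\sum_j(\Lambda_i)_{kj}f_j$ are the linear forms given by the rows of the $n\times n$ matrix $\Lambda_iC$.

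It remains to check that the Macaulay resultant of $n$ linear forms in $n$ variables equals the determinant of their coefficient matrix, at least up to a nonzero scalar. We only need the vanishing loci to agree. By \cref{ex:macaulay}, $\Res(g_1,\dots,g_n)=0$ if and only if the $g_k$ have a common nonzero zero. For a square linear system this happens exactly when $\det(\Lambda_iC)=0$. (Alternatively, the resultant is irreducible of degree $\prod_{j\ne k}d_j=1$ in each $g_k$ and vanishes on $\{\det=0\}$, so it is a nonzero multiple of $\det$, using the normalization $\Res(x_1,\dots,x_n)=1$.)

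Combining these, for every $C$ we have $\rk C<n$ if and only if $\ff$ has a projective zero, which holds if and only if all $N$ resultants vanish, which holds if and only if $\det(\Lambda_iC)=0$ for all $i\in[N]$. There is no real obstacle here. The only point to watch is that the ``generic'' hypothesis of \cref{thm:main-optimal} is exactly the genericity assumed for $\Lambda_1,\dots,\Lambda_N$. The hypothesis $s>n$ also carries over unchanged.
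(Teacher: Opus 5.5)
Your deduction is correct. For $d=1$ the bound $\binom{n}{n-1}s-n^2+1$ equals $N$. Tuples $\ff\in\cP^{1,\dots,1}$ correspond bijectively to matrices $C$ via $\ff(\xx)=C\xx$. The Macaulay resultant of the $n$ linear forms given by the rows of $\Lambda_iC$ vanishes exactly when $\det(\Lambda_iC)=0$.

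This is the reading the introduction suggests (``an application of \cref{thm:main-optimal}''). However, the proof the paper actually gives in \cref{sec:linear-case} deliberately avoids \cref{thm:main-optimal}. It uses the Grassmannian $\GG(n,s)$ in place of the GIT quotient. Via \cref{lem:grassmannian,lem:det-codim} it shows that $\Sigma_L=\{[\Lambda]:\det(\Lambda C_L)=0\}$ is an irreducible hypersurface depending only on the column space $L$ of $C$. It then computes the dimension of the incidence variety $\cI_k\subset\GG(n,s)\times\PP(\KK^{n\times s})^{\times k}$ as $n(s-n)+(sn-2)k$. This is below $k(sn-1)$ as soon as $k>n(s-n)$, so a generic $N$-tuple avoids the image.

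Your route is a two-line specialization, but it inherits everything \cref{thm:main-optimal} rests on: Perron's theorem, Hilbert--Nagata finiteness, Luna's closed-orbit theorem, and the properties of the GIT quotient, all used in characteristic $0$. The direct argument needs only elementary facts about $\GG(n,s)$ and the irreducibility of the determinant. It works in any characteristic, as the remark after the proof notes. It also serves as the model for the main proof, where $\GG(n,s)\cong\PP(\KK^{s\times n})\sslash\SL(n)$ is the $d=1$ case of the quotient $\cX$.
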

To the best of our knowledge, this is a novel description of $N$ polynomials that define the determinantal variety of matrices of rank $<n$.
We also remark that it differs from the results of Bruns and Schw{\"{a}}nzl~\cite{Bruns-Schwanzl-90},
who proved that certain sums of the form \[
q_r \coloneqq \sum_{\substack{1\leq j_1 < j_2 < \dots < j_n \leq s\\ \sum_{\ell=1}^n j_\ell-\ell=r-1}} [j_1, j_2,\dots, j_n],\qquad r=1,2,\dots,N,
\] where $[j_1,\dots,j_n]$ denotes the maximal minor corresponding to the rows $j_1,\dots,j_n$, define the determinantal variety of $s\times n$ matrices of rank $<n$, see also \cite{Nie-10}.

We are not aware of any method of computing the sums $q_r, r=1,2,\dots,N$ without actually computing all of the $\binom{s}{n}$ maximal minors $[j_1,\dots,j_n]$.
By comparison, if the (generic) collection $\Lambda_1,\dots,\Lambda_N$ with the property in \cref{cor:linear-resultant-system} is known in advance, then the polynomials $\det(\Lambda_i C)$ are easy to evaluate. 
We leave the construction of an explicit tuple $(\Lambda_1,\dots,\Lambda_N)$ with this property as an open problem.

Regarding the comparison of \cref{thm:main-optimal} 
to related results for the bivariate case, we have the following immediate corollary.
\begin{restatable}{corollary}{bivariatecor}
\label{cor:bivariate}
    Suppose $n=2$ and $d_1=\dots=d_s=d$. Then there is a resultant system that consists of $(d+1)s-3$ polynomials.
\end{restatable}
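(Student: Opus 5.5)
This is a direct specialization of \cref{thm:main-optimal} to $n=2$. We split into the cases $s>2$ and $s=2$, since the theorem requires $s>n$.

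\textbf{Case $s>2$.} Take $n=2$ in \cref{thm:main-optimal}. The required cardinality is
\[
\tbinom{d+1}{1}s-2^2+1=(d+1)s-3 .
\]
The theorem asserts that for a generic set $\cH\subset\KK^{2\times s}$ with $|\cH|\geq (d+1)s-3$, the Sylvester resultants
\[
\Res(\Lambda_{11}f_1+\dots+\Lambda_{1s}f_s,\;\Lambda_{21}f_1+\dots+\Lambda_{2s}f_s),\qquad \Lambda\in\cH,
\]
form a resultant system. Genericity means the admissible tuples $(\Lambda_1,\dots,\Lambda_N)$, with $N=(d+1)s-3$, contain a nonempty Zariski open subset of $(\KK^{2\times s})^N$. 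Since $\KK$ is algebraically closed, hence infinite, this open set has a $\KK$-point. Choosing such a tuple gives exactly $(d+1)s-3$ polynomials, each a Sylvester resultant as in \cref{ex:macaulay}. Repetitions among them can only shrink the count, and in any case a generic tuple consists of distinct matrices.

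\textbf{Case $s=2$.} Here \cref{thm:main-optimal} does not apply, but the claim follows from Macaulay's theorem (\cref{ex:macaulay}). The single polynomial $\Res(f_1,f_2)$ is a resultant system, and $(d+1)\cdot 2-3=2d-1\geq 1$. To reach exactly $2d-1$ polynomials we may pad the system, for instance with scalar multiples of $\Res(f_1,f_2)$ or with resultants $\Res(\Lambda_{11}f_1+\Lambda_{12}f_2,\;\Lambda_{21}f_1+\Lambda_{22}f_2)$ for invertible $\Lambda$. Each such resultant equals $(\det\Lambda)^{d}\Res(f_1,f_2)$, so it has the same zero set.

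\textbf{Expected obstacle.} There is none of substance: all the content lies in \cref{thm:main-optimal}. The only points that need care are that ``generic'' yields an actual finite choice of matrices, and that the boundary case $s=n=2$ is handled separately by the classical Sylvester resultant.
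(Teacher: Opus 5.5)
Your proposal is correct and matches the paper, which presents this corollary as an immediate specialization of \cref{thm:main-optimal} to $n=2$, where $\binom{d+1}{1}s-2^2+1=(d+1)s-3$. Your separate treatment of the boundary case $s=2$ is a correct small addition that the paper leaves implicit: that case is excluded by the hypothesis $s>n$ of the theorem, and you handle it via the Sylvester resultant together with the identity $\Res(\Lambda\cdot\ff)=(\det\Lambda)^{d}\Res(f_1,f_2)$.
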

\noindent
Recall that the best known upper bounds for bivariate systems were $\cO(s^2d)$ by Encarnaci\'{o}n \cite{Encarnacion-98} and $\cO(sd^2)$ by Orsinger \cite{Orsinger-51} and Kaki\'{e} \cite{Kakie-76} after applying the Bruns-Schw\"{a}nzl reduction in the number of maximal minors needed to define the determinantal variety.
In comparison, \cref{cor:bivariate} yields a resultant system with $(d+1)s-3$ polynomials in the equal--degree case, and $(2s-1)(d+1)-3$ in general by replacing each $f_i$ with $x_1^{d_1-d_i}f_i, x_2^{d_1-d_i}f_i$, matching Lyubeznik's lower bound \eqref{eq:lyubeznik-lower-bound} up to a factor of $2$.

\subsubsection{Punctured resultant systems}
We now turn to a related question:
What is the cardinality of the smallest resultant system if we assume $f_\ell\neq 0$ for every $\ell$;
for example by assuming that all the leading coefficients  are non-zero?
We will call a collection of polynomials $R_1,R_2,\dots,R_{N}$ a \emph{punctured resultant system} for forms of degrees $d_1,d_2,\dots,d_s$ if for every $\ff\in\cP^{d_1,\dots,d_s}$ with $\forall \ell=1,\dots,s, f_\ell\neq 0$ we have \[
\cZ_{\PP}(f_1,\dots,f_s)\neq\varnothing\quad\iff\quad R_1(\ff)=\dots=R_N(\ff)=0.
\]

We prove in the bivariate case that there is an explicit choice of $(s-2)d+1$ linear combinations whose resultants form a punctured resultant system.
Note that this number is smaller than $(d+1)s-3$ that we stated in \cref{cor:bivariate}.
Moreover, the choice of the linear combinations is completely explicit and does not rely on the choice of a generic point. 
\begin{restatable}{theorem}{puncturedbivariate}
\label{thm:bivariate-explicit}
	Suppose $n=2$ and $d_1=\dots=d_s=d$. 
	Then, there is an explicit punctured resultant system of cardinality $(s-2)d+1$.
\end{restatable}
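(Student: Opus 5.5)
The plan is to single out $f_1$ and pack $f_2,\dots,f_s$ into a one-parameter family of linear combinations. Introduce a new variable $t$ and set
\[
g_t \coloneqq f_2 + t f_3 + t^2 f_4 + \dots + t^{s-2} f_s ,
\qquad
P(t) \coloneqq \Res(f_1, g_t).
\]
By \cref{ex:macaulay}, the Sylvester resultant is bihomogeneous of bidegree $(d,d)$ in the coefficients of its two arguments. The coefficients of $g_t$ are polynomials in $t$ of degree at most $s-2$, so $P(t)$ is a polynomial in $t$ of degree at most $(s-2)d$.

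Fix distinct $t_1,\dots,t_N\in\KK$ with $N\coloneqq (s-2)d+1$. The proposed explicit punctured resultant system is
\[
\bigl\{\Res(f_1, g_{t_j}) : j=1,\dots,N\bigr\}.
\]
Each member is a resultant of two linear combinations of the $f_i$, of the form \eqref{eq:lyubeznik-system}. Since $\deg P\le N-1$, Lagrange interpolation (a Vandermonde argument) shows that all $N$ values $P(t_j)$ vanish if and only if $P\equiv 0$ as a polynomial in $t$. It therefore suffices to prove that, when $f_1\neq 0$, the form $P$ vanishes identically if and only if $f_1,\dots,f_s$ have a common zero in $\PP^1$.

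The easy direction is immediate. If $p\in\PP^1$ is a common zero of all $f_i$, then $g_t(p)=0$ for every $t$, so $f_1$ and $g_t$ share the zero $p$ and $P(t)=0$ for all $t$.

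For the converse, I will use the standard fact that for two binary forms of degree $d$ the Sylvester resultant vanishes if and only if the forms have a common zero in $\PP^1$, where the zero form vanishes everywhere. I will check this degenerate case against \cref{ex:macaulay}: if the second form is $0$, the Sylvester matrix has zero rows, so the resultant is $0$, as it should be.

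Now suppose $P\equiv 0$. Then for every $t\in\KK$ the forms $f_1$ and $g_t$ share a zero in $\PP^1$. Since $f_1\neq 0$, it has at most $d$ zeros in $\PP^1$. As $\KK$ is infinite, by pigeonhole there is a single zero $p$ of $f_1$ with $g_t(p)=0$ for infinitely many $t$. Here $g_t(p)=\sum_{i=2}^s t^{i-2} f_i(p)$, evaluated at a fixed affine representative of $p$, is a polynomial in $t$ with infinitely many roots. Hence all its coefficients vanish, that is $f_2(p)=\dots=f_s(p)=0$, and $p$ is a common zero of the whole system.

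This argument only uses $f_1\neq 0$, which is weaker than the punctured hypothesis. The step that needs the most care is the resultant characterization in the degenerate case where $g_t$ is the zero form or drops degree. I would handle it by noting that homogeneity keeps $g_t$ a degree-$d$ form, so no root is lost at infinity, and that the Sylvester determinant vanishes exactly when $f_1$ and $g_t$ have a nonconstant common factor or one of them is zero.
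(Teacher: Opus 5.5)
Your proof is correct. Your system is essentially the paper's, with your $f_1$ playing the role of the paper's $f_s$ (the paper uses $\Res(f^{\lambda_i},f_s)$ with $f^{\lambda}=f_1+\lambda f_2+\dots+\lambda^{s-2}f_{s-1}$ and needs only $f_s\neq 0$). Where you differ is in how the number $(s-2)d+1$ enters the argument.

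The paper never uses the degree of the resultant in the parameter. It pigeonholes the $N=(s-2)d+1$ vanishing resultants directly into the at most $d$ roots of $f_s$. This gives $s-1$ distinct combinations vanishing at one root, and \cref{lem:vandermonde} finishes the proof.

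You instead use the bidegree $(d,d)$ of the Sylvester resultant to bound $\deg_t \Res(f_1,g_t)\le (s-2)d$. You then interpolate to get vanishing for every $t\in\KK$, and pigeonhole infinitely many parameters into finitely many roots.

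Your route gives slightly more. The $N$ values $\Res(f_1,g_{t_j})$ are an invertible Vandermonde transform of the coefficients of the single polynomial $\Res(f_1,g_t)$. So they generate exactly the same ideal as those coefficients, in the spirit of Kapferer and Abramov, for any choice of distinct $t_j$.

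The paper's route is purely combinatorial and does not need the multidegree of the resultant. It works for any coefficient matrix of full spark (see the remark after the proof, e.g.\ $(I_s\;C)$ with $C$ a Cauchy matrix). Your interpolation step, by contrast, relies on the coefficients being polynomials of degree at most $s-2$ in one parameter. The paper's pigeonhole-plus-Vandermonde template is also what it extends to $n>2$ in \cref{thm:explicit-multivariate}, via \cref{prop:dimension-drop}.
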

\noindent
The explicit system that we construct consists of a collection of Sylvester resultants of a linear combination of the first $s-1$ polynomials against $f_s$, i.e., \[
R_i\coloneqq \Res\left(\, f_1 + \lambda_i f_2 + \dots + \lambda_i^{s-2} f_{s-1} ,\; f_s\, \right), \quad i=1,\dots,(s-2)d+1,
\] 
where, in this case, $\Res$ denotes the Sylvester resultant and $\lambda_i\in\KK, i=1,\dots,(s-2)d+1$ are arbitrary distinct scalars, e.g., we may choose them to be the first $(s-2)d+1$ natural numbers.
We remark that we do not assume genericity; any collection of distinct scalars is sufficient.

For systems with $n>2$, we can again construct an explicit punctured resultant system.
Unfortunately, the cardinality of the resultant system that we construct is no longer comparable to  \cref{thm:main-optimal};
but it is $\poly(s,d)$ if $n$ is constant.

\begin{theorem}[\cref{thm:explicit-multivariate}--informal]
    \label{thm:intro-explicit-multivariate}
    Suppose $n>2$ is fixed.
    Then there is an explicit punctured resultant system of cardinality $ \cO(s^{n-1} d^{n(n-1)/2})$.
\end{theorem}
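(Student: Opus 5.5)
We want an explicit punctured resultant system for $n>2$ of size $\cO(s^{n-1}d^{n(n-1)/2})$. The plan is to generalize the bivariate construction of \cref{thm:bivariate-explicit} by induction on $n$. In the bivariate case one substitutes a moment-curve linear combination and interpolates in the parameter $\lambda$. For $n$ variables we use $n-1$ moment-curve parameters.

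\emph{Step 1: the parametrized resultant.} For $k=1,\dots,n-1$ set
\[
g_k(\lambda_k)\coloneqq \sum_{j=1}^{s-n+1} \lambda_k^{j-1} f_{k-1+j}
\]
(or a similar staggered choice of index windows), and put $g_n\coloneqq f_s$. Then consider
\[
P(\lambda_1,\dots,\lambda_{n-1})\coloneqq\Res(g_1(\lambda_1),\dots,g_{n-1}(\lambda_{n-1}),f_s),
\]
a polynomial whose degree in each $\lambda_k$ is at most $(s-n)d^{n-1}$, by multihomogeneity of Macaulay's resultant (\cref{ex:macaulay}). Vanishing of $P$ identically is equivalent to vanishing of its values on a grid $\Lambda_1\times\dots\times\Lambda_{n-1}$ of distinct scalars, with $|\Lambda_k|=(s-n)d^{n-1}+1$. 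This already gives an explicit finite family. A first reduction of its size comes from exploiting symmetry, or from restricting to total degree: $P$ is symmetric-like in structure, so one can pass to a sub-grid adapted to total degree, i.e.\ a unisolvent set for polynomials of bounded total degree.

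\emph{Step 2: correctness.} If $\ff$ has a common zero $\xx_0$, every $g_k$ vanishes at $\xx_0$, so $P\equiv0$. The converse is the heart of the proof. Assume $P\equiv0$, so that for generic $\lambda$ the forms $g_1(\lambda_1),\dots,g_{n-1}(\lambda_{n-1}),f_s$ share a zero. Because $f_s\neq0$, $\cZ(f_s)$ is a hypersurface. Consider the incidence set of pairs $(\lambda,\xx)$ with $\xx\in\cZ(f_s)$ and all $g_k(\lambda_k)(\xx)=0$. It dominates $\KK^{n-1}$, so it has dimension at least $n-1$, while $\cZ(f_s)$ has dimension $n-2$. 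The fiber over some point $\xx\in\cZ(f_s)$ therefore has positive dimension. For fixed $\xx$, the condition $g_k(\lambda_k)(\xx)=0$ says that $\lambda_k$ is a root of the univariate polynomial $\sum_j f_{k-1+j}(\xx)\lambda_k^{j-1}$. This set is finite unless all $f_{k-1+j}(\xx)=0$.

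So positivity of fiber dimension forces some window $k$ to vanish entirely at $\xx$, though not yet all windows. To reach a full common zero, iterate: restrict to the subvariety where window $k$ vanishes, drop one dimension, and repeat. Nonvanishing of each $f_\ell$ is what keeps the dimension counts honest at each stage, just as in the bivariate proof.

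\emph{Main obstacle.} The hardest step is this iterative dimension argument. Positive fiber dimension only kills one window at a time, so the windows must be chosen so that $n-1$ successive kills cover all $f_1,\dots,f_{s-1}$. The fallback is to make each $g_k$ range over all $f_1,\dots,f_{s-1}$ with independent moment-curve parameters. In that case I expect one needs the slicing form $P(\lambda)=\Res(g(\lambda_1),\dots)$ with iterated specialization: fix $\lambda_1$ at $(s-2)d^{n-1}+1$ values, then recurse on $n-1$ variables. This recursion produces the $d^{n(n-1)/2}$ product of degrees $d^{n-1}d^{n-2}\cdots$ and the $s^{n-1}$ factor appearing in the stated bound.
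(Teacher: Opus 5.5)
Your proposal has a genuine gap, and it sits exactly where the bound $\cO(s^{n-1}d^{n(n-1)/2})$ becomes nontrivial.

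\textbf{The interpolation route cannot reach the bound.} Your Step~2 only addresses the implication ``$P\equiv 0\Rightarrow$ common zero''. So your finite family is correct only if its vanishing certifies $P\equiv 0$, which you obtain by interpolation.

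But $P$ has degree up to $(s-2)d^{n-1}$ in \emph{each} $\lambda_k$ (or $(s-n)d^{n-1}$ for staggered windows). This is because Macaulay's resultant has degree $d^{n-1}$ in the coefficients of every argument, and specializing the other parameters does not lower this bound. Treat $P$ as an arbitrary polynomial with these degree bounds. Then any certifying set, whether a full grid, a unisolvent set for total degree, or one adapted to symmetric polynomials, has size growing like $d^{(n-1)^2}$ in $d$, times a power of $s$. Symmetry saves at most a factor $(n-1)!$, and $(n-1)^2>n(n-1)/2$ for every $n>2$.

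Your closing recursion (``fix $\lambda_1$ at $(s-2)d^{n-1}+1$ values, then recurse'') writes down the right target numbers. However, nothing in it produces the smaller factors $d^{n-2},\dots,d$, because they are not degrees of $P$. Separately, the ``kill one window, drop a dimension, repeat'' step of your incidence argument is never made precise.

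\textbf{The missing idea.} You need to give up interpolation and show geometrically that vanishing on a much sparser grid already forces a common zero. The factors $d,d^2,\dots,d^{n-1}$ are B\'{e}zout bounds on the degrees of successive intersections, and these degrees bound the number of irreducible components. The paper uses the single family $f^\lambda=f_1+\lambda f_2+\dots+\lambda^{s-2}f_{s-1}$ and pairwise disjoint sets $S_1,\dots,S_{n-1}$ with $|S_\ell|=(s-\ell-1)d^\ell+1$.

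The key lemma is as follows. Suppose $C$ is pure $m$-dimensional of degree $D$, with $\dim(C\cap\cZ_{\PP}(f_1,\dots,f_{s-1}))<m$. Suppose $C$ lies in the zero set of $r$ combinations $f^{\tau_1},\dots,f^{\tau_r}$, and $S$ is a set of $(s-r-2)D+1$ parameters disjoint from the $\tau_i$. Then some $f^\lambda$ with $\lambda\in S$ vanishes on no component of $C$. Otherwise, since $C$ has at most $D$ components, pigeonhole gives $s-1-r$ parameters vanishing on one component. Together with the $\tau_i$ these are $s-1$ distinct Vandermonde combinations, so by \cref{lem:vandermonde} all of $f_1,\dots,f_{s-1}$ vanish on that component.

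Start from $C_0=\cZ_{\PP}(f_s)$, which is a hypersurface precisely because $f_s\neq 0$. Iterating the lemma gives $\lambda^{(\ell)}\in S_\ell$ such that $C_\ell=\cZ_{\PP}(f_s,f^{\lambda^{(1)}},\dots,f^{\lambda^{(\ell)}})$ is pure of dimension $n-2-\ell$ and degree at most $d^{\ell+1}$, unless a common zero has already appeared.

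At the end, $C_{n-2}$ has at most $d^{n-1}$ points. Every resultant with $\mu\in S_{n-1}$ vanishes, so pigeonhole gives $s-n+1$ values $\mu$ with $f^\mu$ vanishing at a single point $p$. Together with the $n-2$ chosen $\lambda^{(\ell)}$, these span $f_1,\dots,f_{s-1}$, so $p$ is a common zero.

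Note that the ordering is the reverse of yours. The smallest set is used first, against the degree-$d$ hypersurface. The largest set is used last, against the at most $d^{n-1}$ points.
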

\noindent
As in the bivariate case, the punctured resultant system consists of polynomials of the form \[
R_i\coloneqq \Res\left(\, \sum_{\ell=1}^{s-1} \lambda_i^{\ell-1} f_\ell, \, \sum_{\ell=1}^{s-1} \mu_i^{\ell-1} f_\ell,\, \dots,\, \sum_{\ell=1}^{s-1} \gamma_i^{\ell-1} f_\ell, \, f_s\,\right),\quad i=1,\dots,\cO(s^{n-1}d^{n(n-1)/2}),
\] where $\Res$ denotes Macaulay's resultant and $\lambda_i,\mu_i,\dots,\gamma_i\in\KK$, for $i=1,\dots,\cO(s^{n-1}d^{n(n-1)/2})$, are distinct scalars.

\subsection{Outline of the paper}
In the preliminary \cref{sec:prelims}, we collect known facts from algebraic geometry.
We prove the existence of the resultant variety and its dimension formula and provide an elementary proof of \cref{thm:intro-abramov} using Perron's theorem.
\cref{sec:main} is dedicated to the proof of \cref{thm:main-optimal}.
We start by proving \cref{cor:linear-resultant-system} in \cref{sec:linear-case} as a preparation for the proof of \cref{thm:main-optimal}.
The main technicality in the proof of \cref{thm:main-optimal} is the construction of the \emph{projective GIT quotient} of the space of tuples of polynomials in $n$ variables by the action of $\SL(n)$.
We give this construction in \cref{sec:git} and give a complete proof of \cref{thm:main-optimal} in \cref{sec:proof-main}.
In \cref{sec:multiproj}, we prove \cref{thm:bivariate-explicit,thm:intro-explicit-multivariate} by relying on pigeonhole--type arguments in intersection theory.

\section{Preliminaries}
\label{sec:prelims}
We briefly review some facts from algebraic geometry.
For a detailed discussion of the various topics, we refer to \cite{Harris-93,Shafarevich-13}. 
\subsection{Affine and projective varieties}
\label{sec:varieties}

We fix an algebraically closed field $\KK$ of characteristic $0$ and let
$\KK[x_1,\dots,x_n]$ denote the ring of polynomials in $n$ variables with coefficients in $\KK$. 
We will also use the notation $\KK[\xx]$ if the number of variables is clear from the context.
Given a natural number $d$, $\KK[\xx]_d$ denotes the vector space of homogeneous polynomials of degree $d$.

We denote by $\KK^n$ the $n$-dimensional vector space over $\KK$.
A polynomial $f\in\KK[x_1,\dots,x_n]$ defines a function on $\KK^n$, which maps $(a_1,\dots,a_n)$ to $f(a_1,\dots,a_n)$.
If $f_1,\dots,f_s\in\KK[\xx]$, then we denote 
their zero locus in $\KK^n$ by
  \[
\cZ(f_1,\dots,f_s) \coloneqq \{x\in\KK^n\mid f_1(x)=\dots=f_s(x)=0\} .
\] 
We call a set of this form an \emph{affine variety}.
We have $\cZ(f_1,\dots,f_s)=\cZ(I)$ where $I=(f_1,\dots,f_s)$ is the ideal of $\KK[\xx]$ generated by $f_1,\dots,f_s$.

We obtain the \emph{Zariski topology} on $\KK^n$ by declaring affine varieties as closed sets.
An affine variety $X\subset\KK^n$ is called \emph{irreducible} if it cannot be written as the union of two proper closed subsets. 
Every affine variety admits a decomposition as a finite union $X=X_1\cup\dots\cup X_c$  of irreducible varieties.

For a subset $X\subset\KK^n$, we define \[
I(X) \coloneqq \{ f\in \KK[\xx] \mid \forall x\in X, f(x)=0 \}
\] to be the \emph{ideal} of $X$. 
Hilbert's Nullstellensatz (see e.g. \cite[Chapter 4, \S 1]{Cox-Little-Oshea-25}) states that \[
I(\cZ(f_1,\dots,f_s)) = \sqrt{(f_1,\dots,f_s)},
\] where $\sqrt{\cdot}$ denotes the \emph{radical}.
If $X\subset\KK^n$ is an affine variety, then we denote by \[
\KK[X] \coloneqq \KK[\xx]/I(X)
\] its \emph{affine coordinate ring}.
Note that $\KK[\KK^n]\cong\KK[x_1,\dots,x_n]$.
There is a one-to-one correspondence between points $p\in X$ and maximal ideals $\mathfrak{m}\subset\KK[X]$ via $p\mapsto I(\{p\})=(x_1-p_1,\dots,x_n-p_n)$.
In particular, we have $p=p'$ if and only if $I(\{ p\})=I(\{p'\})$.

Conversely, if $A$ is a finitely generated $\KK$-algebra, then
$A \cong \KK[x_1,\dots,x_n]/I$, for some ideal
$I=\ideal{f_1,\dots,f_s}\subset \KK[x_1,\dots,x_n]$.
The closed points of $\Spec A$ are identified with the affine variety
$\cZ(I)\subset \KK^n
$.
If $A$ is reduced, equivalently if $I$ is radical, then $A$ is the coordinate
ring of the affine variety $\cZ(I)$.

There is an alternative definition that does not rely on choosing generators. 
We define $\Spec A$ to be the set of prime ideals of $A$.
We define the Zariski topology on $\Spec A$ whose closed sets are of the form $\cZ(I)\coloneqq\{\mathfrak{p}\in \Spec A\mid I\subset \mathfrak{p}\}$ where $I$ denotes an ideal of $A$.

We denote by $\PP^{n-1}$ the $(n-1)$-dimensional projective space over $\KK$; defined as the set of one--dimensional linear subspaces of $\KK^n$.
If $f_1,\dots,f_s$ are homogeneous polynomials, then we denote by \[
\cZ_{\PP}(f_1,\dots,f_s) \coloneqq \{[x]\in\PP^{n-1}\mid f_1(x)=\dots=f_s(x)=0\}
\] their zero locus in $\PP^{n-1}$.
Here, $[x]$ denotes the line in $\KK^n$ that goes through $x\neq 0$.
We call a set of the form $\cZ_{\PP}(f_1,\dots,f_s)$ a \emph{projective variety}.
If $\cZ_{\PP}(f_1,\dots,f_s)\neq\varnothing$, we say that $f_1,\dots,f_s$ have a common non-zero solution.
We note that the Cartesian product of two projective varieties is also a projective variety via the \emph{Segre embedding}, see \cite[Example~2.21]{Harris-93}.

The Zariski topology on $\PP^{n-1}$ is defined by defining the projective varieties to be closed sets.
If $X\subset\PP^{n-1}$ is a projective variety, then we define the \emph{affine cone} over $X$ as \[
C(X) \coloneqq \{x\in \KK^n\mid [x]\in X\} \cup \{0\}.
\] Note that $C(X)=\cZ(f_1,\dots,f_s)$ is an affine variety.
The \emph{homogeneous coordinate ring} of a projective variety $X$ is defined as the affine coordinate ring $\KK[C(X)]$ of $C(X)$.
We note that $\KK[C(X)]$ admits a natural grading $\KK[C(X)]=\oplus_{d=0}^\infty \KK[\xx]_d/I(C(X))_d$.
This is well-defined since $I(C(X))$ is a homogeneous ideal.
There is a correspondence between points $[x]\in X$ and homogeneous prime ideals of $\KK[X]$, given by $[x]\mapsto \mathfrak{p}_{x}\coloneqq 
\ideal{f\in \KK[X]\mid f\text{ is homogeneous and } f(x)=0}$.
We have $[x]=[x']$ if and only if $\mathfrak{p}_x=\mathfrak{p}_{x'}$.

If $A=\oplus_{d=0}^{\infty} A_d$ is a \emph{graded ring}, then there is a corresponding projective variety $\Proj A$ defined as follows:
If $A$ is generated by homogeneous elements $f_1,\dots,f_s$ of the same degree, then we have a surjection \[
\KK[y_1,\dots,y_s]\rightarrow A, \quad y_i\mapsto f_i.
\] The kernel $I$ of this map is a homogeneous ideal and we define $\Proj A\coloneqq \cZ_{\PP}(I)\subset\PP^{s-1}$.
If $A$ cannot be generated by elements of the same degree, then we realize $\Proj A$ as an algebraic subset of the \emph{weighted projective space}, which is itself a projective variety.

An alternative description of $\Proj A$ operates as follows.
We call $A_{+}\coloneqq\oplus_{d=1}^\infty A_d$ the \emph{irrelevant ideal} of $A$. 
We define $\Proj A$ to be the set of homogeneous prime ideals of $A$ that do not contain $A_{+}$:\[
\Proj A\coloneqq \{ \mathfrak{p}\mid \mathfrak{p}\text{ is a homogeneous prime ideal of }A \text{ and } A_{+}\not\subset \mathfrak{p} \}.
\] We define the Zariski topology on $\Proj A$ by defining the closed sets to be those of the form $\cZ(I)=\{\mathfrak{p}\in\Proj A\mid I\subset \mathfrak{p}\}$ for a homogeneous ideal $I$.
This defines the structure of a projective variety on $\Proj A$.

An open subset $X$ of a projective variety is called a \emph{quasi-projective variety}. 
The dimension of a quasi-projective variety is defined as the dimension of its closure $\dim X\coloneqq \dim\overline{X}$.
We will use the following theorem for dimension computations.

\begin{theorem}[{\cite[Theorem~11.12]{Harris-93}}]
\label{thm:harris-dim}
    Let $X$ be a quasi-projective variety and $\phi:X\rightarrow\PP^n$ be a map.
    Let $Y$ be the Zariski closure of $\phi(X)$.
    For $p\in X$, let $X_p=\phi^{-1}(\phi(p))$ be the fiber of $\phi$ through $p$, and let $\mu(p)\coloneqq \dim_p(X_p)$ be the local dimension of $X_p$ at $p$.
    Then, $\mu(p)$ is an upper semi-continuous function with respect to the Zariski topology on $X$, i.e., its superlevel sets $\{p\in X\mid \mu(p)\geq r\}$ are Zariski closed.
    Moreover, if $X_0$ is an irreducible component of $X$, $r$ is the minimum value of $\mu(p)$ on $X_0$ and $Y_0\subset Y$ is the closure of the image of $X_0$, then \[
    \dim X_0 = \dim Y_0 + r.
    \]
\end{theorem}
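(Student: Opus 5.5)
We would follow the standard argument via transcendence degree, Krull's principal ideal theorem, and induction on $\dim X$. First we reduce to the irreducible case. The local dimension $\dim_p(X_p)$ is the maximum, over the irreducible components $X_0\ni p$, of the local dimension at $p$ of $X_p\cap X_0$. A finite union of closed sets is closed. Hence both assertions of \cref{thm:harris-dim} reduce to the statement for the restriction $\phi|_{X_0}\colon X_0\to Y_0$, where $X_0$ is irreducible and $Y_0$ is the closure of its image. For semicontinuity we will need to keep track of fibers of the full map $\phi$; this point is handled in the last step. Since both statements are local, we may also pass to affine open subsets of $X_0$ and $Y_0$, so that $\phi$ becomes a dominant morphism of irreducible affine varieties. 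Dominance gives an injection $\KK[Y_0]\hookrightarrow\KK[X_0]$ and hence an extension of function fields $\KK(Y_0)\subset\KK(X_0)$. Set $e\coloneqq \operatorname{trdeg}_{\KK(Y_0)}\KK(X_0)=\dim X_0-\dim Y_0$.

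\emph{Step 1 (generic fibers have dimension $e$).} Apply Noether normalization to $\KK(Y_0)\otimes_{\KK[Y_0]}\KK[X_0]$ over $\KK(Y_0)$, and clear the finitely many denominators. This produces a nonzero $g\in\KK[Y_0]$ and elements $z_1,\dots,z_e\in\KK[X_0]_g$ such that $\KK[X_0]_g$ is finite over $\KK[Y_0]_g[z_1,\dots,z_e]$. After shrinking further, the resulting map $X_0\cap\phi^{-1}(U)\to U\times\mathbb{A}^e$ is finite and surjective, where $U=\{g\neq 0\}$. Consequently every fiber over a point of the dense open set $U$ has pure dimension $e$.

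\emph{Step 2 (every fiber component has dimension $\ge e$).} Fix $q\in Y_0$ and set $m=\dim Y_0$. Choose $h_1,\dots,h_m$ on an affine neighbourhood of $q$ such that $q$ is an isolated point of $\cZ(h_1,\dots,h_m)\cap Y_0$; such a choice exists by repeatedly cutting with functions that avoid all components through $q$. Near $p$, the fiber $X_p$ then coincides with $X_0\cap\cZ(\phi^*h_1,\dots,\phi^*h_m)$. Krull's principal ideal theorem shows that every component of this intersection has dimension $\ge \dim X_0-m=e$. Combined with Step 1, the minimum of $\mu$ on $X_0$ is exactly $r=e$, which gives $\dim X_0=\dim Y_0+r$.

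\emph{Step 3 (upper semicontinuity).} We argue by induction on $\dim X$. For $t\le e$, the set $\{p\in X_0:\mu(p)\ge t\}$ is all of $X_0$ by Step 2. For $t>e$, Step 1 shows that the set is contained in the proper closed subset $X'\coloneqq\phi^{-1}(Y_0\setminus U)\cap X_0$, whose dimension is smaller than $\dim X_0$. The key observation is that for $p\in X'$, the fiber of $\phi|_{X'}$ through $p$ equals $X_p\cap X_0$. Therefore the local fiber dimension computed on $X'$ agrees with the one computed on $X_0$, and the induction hypothesis applied to the components of $X'$ shows that the superlevel set is closed.

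We expect the main obstacle to be Step 2 together with the bookkeeping of local dimensions across components in Step 3. The local-dimension statement must be made at the point $p$ itself rather than globally, so the hypersurfaces $h_i$ must be chosen so that $q$ is isolated in their common zero set. We would try to achieve this first via a prime avoidance argument in the local ring $\mathcal{O}_{Y_0,q}$.
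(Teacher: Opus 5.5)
The paper gives no proof of this statement: it quotes it from Harris and uses it as a black box, so there is nothing internal to compare against. Your proposal is a correct, self-contained proof along the standard commutative-algebra route. Write $\mu_0$ for the local fibre dimension of $\phi|_{X_0}$. The three ingredients are:
\begin{itemize}
\item generic Noether normalization, which gives $\mu_0\le e$ over a dense open $U$;
\item Krull's principal ideal theorem applied to a system of parameters $h_1,\dots,h_m$ of the $m$-dimensional local ring $\mathcal{O}_{Y_0,q}$, which gives $\mu_0\ge e$ everywhere (so the step you flag as the main obstacle is routine);
\item induction on dimension, which works precisely because $X'=\phi^{-1}(Y_0\setminus U)\cap X_0$ is a union of full fibres, so the fibre dimension computed on $X'$ agrees with the one computed on $X_0$.
\end{itemize}
Citing keeps the paper short; your argument makes the result independent of the reference, using only Noether normalization, Krull's theorem and induction on dimension.

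Three points should be tightened when you write it out.

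\emph{Step 1 only gives an upper bound.} A finite surjective map onto $U\times\mathbb{A}^e$ bounds the fibre components over $U$ by $e$ from above. Purity of the fibres needs Step~2. This is harmless, since Step~3 only uses the upper bound.

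\emph{Reducible $X$ in the dimension formula.} The reduction to $\phi|_{X_0}$ is not literally automatic here, because $\mu$ is computed from fibres of $\phi$ on all of $X$. You need to note that $\mu\ge\mu_0$ on $X_0$, with equality on the dense open set $X_0\setminus\bigcup_{i\neq 0}X_i$. That set meets the non-empty open set produced in Step~1, so both minima equal $e$.

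\emph{Affine reduction.} After you pass to an affine open $W\subset X_0$, the set $U$ only controls fibres of $\phi|_W$, not points of $X_0\setminus W$ lying over $U$. So $X'$ must be formed inside $W$, where it is saturated for $\phi|_W$. Semicontinuity on the various $W$ then glues, because $\mu_0|_W$ is exactly the local fibre dimension of $\phi|_W$.

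With these conventions the argument is complete.
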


As a corollary we obtain the following:
\begin{corollary}
\label{cor:quasi-proj-dim}
   If $X$ is a quasi-projective variety, $\phi:X\rightarrow\PP^{n}$ is a map and all non-empty fibers have the same global dimension $r$, then \[
   \dim X = \dim\overline{\phi(X)} + r.
   \]
\end{corollary}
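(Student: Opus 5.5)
The plan is to derive \cref{cor:quasi-proj-dim} directly from \cref{thm:harris-dim}, applying it once to each irreducible component of $X$. First I fix an irreducible decomposition $X = X_1\cup\dots\cup X_c$ into irreducible components; these are quasi-projective, being closed in $X$. Writing $Y_i\coloneqq\overline{\phi(X_i)}$, we have $\overline{\phi(X)}=\bigcup_i Y_i$, because closure commutes with finite unions. It follows that $\dim X=\max_i\dim X_i$ and $\dim\overline{\phi(X)}=\max_i \dim Y_i$.

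The next step is to identify the minimal local fiber dimension on each component. For $p\in X$, the fiber $X_p=\phi^{-1}(\phi(p))$ is non-empty, so by hypothesis its global dimension is $r$. Hence $\mu(p)=\dim_p X_p\le r$ for every $p$. Moreover, $\mu(p)=r$ exactly when $p$ lies on a component of $X_p$ of maximal dimension $r$. Applying \cref{thm:harris-dim} to a component $X_i$ gives $\dim X_i=\dim Y_i+r_i$, where $r_i\coloneqq\min_{X_i}\mu\le r$. Taking maxima then yields $\dim X\le \max_i\dim Y_i + r=\dim\overline{\phi(X)}+r$.

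For the reverse inequality, choose $i$ with $\dim Y_i=\dim\overline{\phi(X)}$. It suffices to show that $\dim X_i \ge \dim Y_i + r$, which requires a component with $r_i=r$. This is the main obstacle: a priori the fibers over $\phi(X_i)$ could meet $X_i$ only in small pieces, with their $r$-dimensional parts lying in other components. I would argue as follows. The set $Z\coloneqq\{p\in X\mid\mu(p)\ge r\}$ is closed by upper semicontinuity, and it meets every fiber, since each fiber has an $r$-dimensional component. Hence $\phi(Z)=\phi(X)$.

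Now write $Z=\bigcup_j Z_j$ as a union of irreducible components and apply \cref{thm:harris-dim} to $\phi|_{Z_j}$. The fibers of this restriction are $X_p\cap Z$, and this set contains every $r$-dimensional component of $X_p$ through $p$, so the local dimension at points of $Z_j$ is at least $r$ (to be checked carefully: local fiber dimension in $Z_j$ versus in $Z$). This gives $\dim Z_j\ge\dim\overline{\phi(Z_j)}+r$. Since $\overline{\phi(X)}=\bigcup_j\overline{\phi(Z_j)}$, some $j$ has $\dim\overline{\phi(Z_j)}=\dim\overline{\phi(X)}$, and therefore $\dim X\ge\dim Z_j\ge\dim\overline{\phi(X)}+r$.

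The point needing care is the local-dimension step for $Z_j$. If it proves delicate, I would replace $Z_j$ by the union of the $r$-dimensional fiber components, or alternatively pass to a component $Z_j$ that dominates $Y_i$ and use the generic fiber dimension statement implicit in \cref{thm:harris-dim}.
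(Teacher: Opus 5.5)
Your proof is correct. Your upper bound is the paper's argument, but your lower bound takes a different route.

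The paper fixes a top-dimensional component $W$ of $\overline{\phi(X)}$ and considers the components $X_\ell$ of $X$ with $\overline{\phi(X_\ell)}=W$. It then asserts, without argument, that for one of them the minimum of $\mu$ on $X_\ell$ equals $r$, after which \cref{thm:harris-dim} gives $\dim X_\ell=\dim W+r$. Justifying that claim needs a generic-fiber argument over $W$.

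You instead pass to the closed locus $Z=\{\mu\ge r\}=\{\mu=r\}$, show that it surjects onto $\phi(X)$, and apply \cref{thm:harris-dim} to $\phi|_Z$. This way you never need to know which component of $X$ realizes the dimension count; you only use $\dim X\ge\dim Z_j$.

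The point you flagged is not an obstacle. Apply the theorem to $\phi|_Z$ itself, whose fiber through $p$ is $X_p\cap Z$. If $F$ is an $r$-dimensional component of $X_p$ through $p$, then every $q\in F$ has $X_q=X_p$, hence $\mu(q)\ge r$. So $F\subset Z$, and $\dim_p(X_p\cap Z)=r$ for every $p\in Z$. The ``moreover'' clause of \cref{thm:harris-dim}, applied to each component $Z_j$ of $Z$, then gives $\dim Z_j=\dim\overline{\phi(Z_j)}+r$ directly. You never need to compute fiber dimensions inside $Z_j$.

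What each approach buys: your argument is self-contained given the cited theorem, and it supplies the justification that the paper's lower bound leaves implicit. The paper's route, once its claim is justified, gives slightly more: over each prescribed top-dimensional component of the image it produces a component of $X$ of dimension $\dim\overline{\phi(X)}+r$ dominating it.
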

\begin{proof}
Set $Y=\overline{\phi(X)}$.
    Suppose $X=X_1\cup \dots\cup X_c$ is the irreducible decomposition of $X$ and define $Y_\ell\coloneqq\overline{\phi(X_\ell)}$.
    We have $\dim X =\max_{1\leq \ell\leq c}\dim X_\ell $.
    For $p\in X_{\ell}$, we have \[
    \dim_p(\phi^{-1}(\phi(p))\cap X_{\ell}) \leq \dim\phi^{-1}(\phi(p)) = r,
    \] where $\dim_p$ denotes the local dimension at $p$.
    Hence, \cref{thm:harris-dim} gives \[
    \dim(X_{\ell}) = \dim Y_\ell + \min_{p\in X_{\ell}}\dim_p\left(\phi^{-1}(\phi(p))\cap X_{\ell}\right) \leq \dim Y+r.
    \] Since $\ell$ was arbitrary, we deduce $\dim X\leq \dim Y +r$.
    
    For the opposite inequality, let $Z$ be a component of $Y$ of maximal dimension.
    Since $Y$ is covered by the irreducible varieties $\overline{\phi(X_\ell)}, \ell=1,\dots,c$, there exists $\ell$ such that $\overline{\phi(X_\ell)}=Z$.
Among all such components, at least one of them has the property that $\mu(p)$ takes the value $r$ on $X_\ell$.     
By applying \cref{thm:harris-dim} we obtain $\dim X_{\ell}=\dim Z + r=\dim Y+r$.
    This finishes the proof.
\end{proof}

We recall that if $X$ is a projective variety and $\phi:X\rightarrow\PP^{n}$ is a map, then $\phi(X)$ is Zariski closed, see \cite[Theorem~3.12]{Harris-93}. 
In this case, \cref{thm:harris-dim} implies the following:
\begin{lemma}[{\cite[Corollary~11.13]{Harris-93}}]
\label{lem:dimension}
    Suppose $X$ is a projective variety, $\phi:X\rightarrow\PP^{n}$ is a map and $Y=\phi(X)$.
    For $q\in Y$ define $\lambda(q)\coloneqq \dim\phi^{-1}(q)$.
    Then, $\lambda(q)$ is an upper semi-continuous function of $q$ with respect to the Zariski topology on $Y$, i.e., its superlevel sets $\{q\in Y\mid \lambda(q)\geq r\}$ are Zariski closed.
    Moreover, if $X_0$ is an irreducible component of $X$, $Y_0=\phi(X_0)$ and $r$ is the minimum value of $\lambda(q)$ on $Y_0$, then \[
    \dim X_0 = \dim Y_0 + r.
    \]
\end{lemma}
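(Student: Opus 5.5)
The statement to prove is \cref{lem:dimension}, the projective version of \cref{thm:harris-dim}. The plan is to deduce it from \cref{thm:harris-dim}, using that $X$ is now projective, so the map $\phi$ is closed.

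\textbf{Upper semicontinuity of $\lambda$.} Fix $r$ and set
\[
E_r\coloneqq\{p\in X\mid \mu(p)\geq r\},
\]
where $\mu(p)=\dim_p\phi^{-1}(\phi(p))$. By \cref{thm:harris-dim}, $E_r$ is Zariski closed in $X$. Since $\dim\phi^{-1}(q)$ is the maximum of the local dimensions at its points, we have $\lambda(q)\geq r$ if and only if some $p\in\phi^{-1}(q)$ has $\mu(p)\geq r$. Hence
\[
\{q\in Y\mid \lambda(q)\geq r\}=\phi(E_r).
\]
A closed subset of a projective variety is projective, so $\phi(E_r)$ is Zariski closed by \cite[Theorem~3.12]{Harris-93}. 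This gives upper semicontinuity.

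\textbf{Dimension formula.} Let $X_0$ be an irreducible component of $X$. Then $Y_0=\phi(X_0)$ is closed and irreducible. Apply \cref{thm:harris-dim} to $\phi|_{X_0}$: this gives $\dim X_0=\dim Y_0+r'$, where $r'$ is the minimum over $p\in X_0$ of $\mu_0(p)\coloneqq\dim_p(\phi|_{X_0})^{-1}(\phi(p))$. It remains to show $r'$ equals the minimum $r$ of the fibre dimensions over $Y_0$.

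\textbf{Main obstacle: matching $r'$ with $r$.} Two issues arise. First, the fibres of $\phi|_{X_0}$ are $\phi^{-1}(q)\cap X_0$, which may be smaller than the full fibres $\phi^{-1}(q)$ used to define $\lambda$. Second, a minimum of local dimensions must be compared with a minimum of global fibre dimensions. I would handle both as follows.

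\emph{Generic fibres of $X_0$.} The set of $p\in X_0$ with $\mu_0(p)\geq r'+1$ is a proper closed subset $F\subset X_0$, and $\dim\phi(F)<\dim Y_0$. To see this, apply \cref{thm:harris-dim} to each component $F_i$ of $F$: every fibre of $F_i$ sits inside a fibre of $X_0$, giving $\dim F_i\geq\dim\phi(F_i)+r'+1$. Since $\dim F_i<\dim X_0=\dim Y_0+r'$, it follows that $\dim\phi(F_i)<\dim Y_0$. So for $q$ in the dense open subset $Y_0\setminus\phi(F)$, every component of $\phi^{-1}(q)\cap X_0$ has dimension exactly $r'$.

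\emph{Comparison with the full fibres.} Let $X_1,\dots,X_c$ be the other components of $X$. Some of them, say $X_j$, may have $\phi(X_j)\supseteq Y_0$. Over generic $q\in Y_0$, the full fibre $\phi^{-1}(q)$ is the union of $\phi^{-1}(q)\cap X_0$ and the pieces $\phi^{-1}(q)\cap X_j$, and these pieces could have dimension larger than $r'$. In that case $r>r'$ and the formula with the full-fibre minimum would fail.

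So the correct reading of the statement is that $\lambda$ in the second part is computed for the restricted map $\phi|_{X_0}$. Equivalently, the second part is meant for irreducible $X$, which is the form in which \cite[Corollary~11.13]{Harris-93} is stated. In that setting, the generic fibre dimension of $X_0$ equals $r'$. By upper semicontinuity, applied to $\phi|_{X_0}$ using the first part of the proof, $r'$ is also the minimum of $\dim\phi^{-1}(q)\cap X_0$ over $Y_0$. This proves $\dim X_0=\dim Y_0+r$.
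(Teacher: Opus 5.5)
Your approach is the one the paper intends. The paper gives no argument beyond citing Harris and noting that the lemma follows from \cref{thm:harris-dim} together with closedness of images of projective varieties. Your semicontinuity argument, $\{q\in Y\mid\lambda(q)\ge r\}=\phi(E_r)$ with $E_r$ closed in $X$ and hence projective, is exactly that.

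Your diagnosis of the second part is also right, and it can be made concrete. Let $X\subset\PP^2$ be a line together with a point off it, let $\phi$ be constant, and let $X_0$ be the isolated point. Then $\dim X_0=0$ but $\dim Y_0+r=0+1$. So the formula holds only with $\lambda$ replaced by $q\mapsto\dim(\phi^{-1}(q)\cap X_0)$, or equivalently for irreducible $X$. This is all the paper needs: it applies the lemma only to the incidence varieties $\cI$ and $\cI_k$, after proving them irreducible via \cref{lem:irred}. Drop the remark that the cited corollary is stated only for irreducible $X$. The lemma follows that corollary's phrasing, which also allows a component of a reducible $X$, so you are correcting the statement rather than recovering its original form.

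One step of your argument for the corrected version is justified backwards. The inclusion of fibres of $F_i$ into fibres of $X_0$ only bounds the former from above, so it cannot give $\dim F_i\ge\dim\phi(F_i)+r'+1$. The right reason is as follows. Let $p\in F$, and let $Z$ be an irreducible component of $(\phi|_{X_0})^{-1}(\phi(p))$ through $p$ with $\dim Z\ge r'+1$. Every $z\in Z$ lies in the same fibre, so $\mu_0(z)\ge\dim Z\ge r'+1$, and hence $Z\subset F$. Therefore the fibre of $\phi|_F$ through $p$ has local dimension at least $r'+1$ at $p$. Now \cref{thm:harris-dim}, applied to $\phi|_F$ with $F_i$ as one of its components, gives the inequality. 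Apply it to $\phi|_F$ and not to $\phi|_{F_i}$, since $Z$ may lie in a different component of $F$.

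With this repair the rest of your argument goes through. That includes the final use of semicontinuity to identify the minimum of $\dim(\phi^{-1}(q)\cap X_0)$ over $Y_0$ with the generic value $r'$.
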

\noindent
We note that if $Y$ is irreducible (which is the case if $X$ is irreducible), then upper semi-continuity of $\lambda(q)$ implies that its minimum value coincides with its generic value.

The following is a useful condition for the irreducibility of $X$.
\begin{lemma}[{\cite[Corollary~11.14]{Harris-93}}]
\label{lem:irred}
Suppose $X$ is a projective variety, $\phi:X\rightarrow \PP^n$ is a map and $Y=\phi(X)$.
    If $Y$ is irreducible and all fibers $\phi^{-1}(q)$ are irreducible of the same dimension, then $X$ is irreducible.
\end{lemma}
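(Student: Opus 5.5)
We argue by decomposing $X$ into irreducible components and using the upper semi-continuity of fiber dimension from \cref{lem:dimension}. Write $X = X_1\cup\dots\cup X_c$ for the irreducible decomposition, and let $r$ be the common dimension of the fibers $\phi^{-1}(q)$, $q\in Y$. For each $i$, let $\phi_i\coloneqq \phi|_{X_i}: X_i\to\PP^n$. Since $X_i$ is closed in the projective variety $X$, it is itself projective, so $\phi_i(X_i)$ is closed in $\PP^n$ and hence closed in $Y$.

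The key observation is the following. For $q\in Y$, the fiber $\phi^{-1}(q)=\bigcup_i \phi_i^{-1}(q)$ is a finite union of closed subsets. Since this fiber is irreducible, it equals $\phi_i^{-1}(q)$ for some $i$, i.e.\ $\phi^{-1}(q)\subset X_i$. Moreover, for a fixed $i$ we have
\[
\phi^{-1}(q)\subset X_i \iff \dim\phi_i^{-1}(q)\ge r .
\]
The forward direction is clear. For the converse, $\phi_i^{-1}(q)$ is a closed subset of the irreducible $r$-dimensional variety $\phi^{-1}(q)$. A closed subset of an irreducible variety having full dimension must be the whole variety, so $\phi_i^{-1}(q)=\phi^{-1}(q)$.

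Set $Y_i\coloneqq\{q\in Y\mid \phi^{-1}(q)\subset X_i\}$. By the equivalence above, $Y_i=\{q\in\phi_i(X_i)\mid \dim\phi_i^{-1}(q)\ge r\}$; note that for $q\notin\phi_i(X_i)$ the fiber $\phi_i^{-1}(q)$ is empty, so such $q$ cannot lie in $Y_i$. Applying \cref{lem:dimension} to the projective variety $X_i$ and the map $\phi_i$, this superlevel set is Zariski closed in $\phi_i(X_i)$, and hence closed in $Y$.

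By the first paragraph, $Y=Y_1\cup\dots\cup Y_c$ is a finite union of closed subsets. Since $Y$ is irreducible, $Y=Y_i$ for some $i$. Then every fiber of $\phi$ lies in $X_i$, so $X=\bigcup_{q\in Y}\phi^{-1}(q)\subset X_i$. Hence $X=X_i$ is irreducible.

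The only delicate point is the middle step, which converts the set-theoretic condition $\phi^{-1}(q)\subset X_i$ into a dimension condition so that semi-continuity applies. This is exactly where the hypotheses that all fibers are irreducible and of the same dimension are used.
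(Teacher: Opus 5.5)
Your proof is correct and is essentially the standard argument from the cited source (Harris, Corollary~11.14). The paper itself gives no proof and only cites Harris. The argument runs as follows: each irreducible fiber lies in a single component $X_i$; the loci $Y_i=\{q\mid \phi^{-1}(q)\subset X_i\}$ are closed by upper semicontinuity of fiber dimension for $\phi|_{X_i}$, using the equal-dimension hypothesis; and irreducibility of $Y$ forces $Y=Y_i$, hence $X=X_i$.
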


\subsection{Resultant variety}
We discuss the existence of the resultant variety, its irreducibility, and  its dimension.

Let $s$ and $n$ be two natural numbers with $s\geq n$ and assume $d_1,d_2,\dots,d_s\geq 1$.
We denote by $
\cP^{d_1,d_2,\dots,d_s}\coloneqq \KK[\xx]_{d_1}\times \KK[\xx]_{d_2}\times\dots\times \KK[\xx]_{d_s}
$ the vector space of $s$-tuples of polynomials in $n$ variables of degrees $d_1,d_2,\dots,d_s$.
\begin{proposition}[Resultant variety]
\label{prop:resultant_variety}
The set of tuples $\ff\in\cP^{d_1,\dots,d_s}$ with a common non-zero solution is an irreducible affine variety $\cR_{d_1,\dots,d_s}$ in $\cP^{d_1,\dots,d_s}$ of codimension $s-n+1$.
\end{proposition}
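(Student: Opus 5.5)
We will use the standard incidence correspondence. Let
\[
\widetilde{\cR}\coloneqq\{([x],\ff)\in\PP^{n-1}\times\cP^{d_1,\dots,d_s}\mid f_1(x)=\dots=f_s(x)=0\},
\]
with the two projections $\pi_1:\widetilde{\cR}\to\PP^{n-1}$ and $\pi_2:\widetilde{\cR}\to\cP^{d_1,\dots,d_s}$. By definition $\cR_{d_1,\dots,d_s}=\pi_2(\widetilde{\cR})$. Since the results quoted above are stated for projective varieties, we first replace the affine space $\cP^{d_1,\dots,d_s}$ by its projectivization $\PP(\cP^{d_1,\dots,d_s})$. The defining conditions are homogeneous (linear) in the coefficients of $\ff$, so this loses nothing: $\cR_{d_1,\dots,d_s}$ is the affine cone over the projectivized image. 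We therefore work with $\widetilde{\cR}\subset\PP^{n-1}\times\PP(\cP)$. This is a projective variety via the Segre embedding, cut out by bihomogeneous equations.

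\textbf{Irreducibility and dimension of $\widetilde{\cR}$.} Fix $[x]\in\PP^{n-1}$. The condition $f_i(x)=0$ is a single nontrivial linear condition on the coefficients of $f_i$, namely evaluation at $x\neq0$ of a form of degree $d_i$. These $s$ conditions involve disjoint sets of coefficients, so they are independent. Hence the fiber $\pi_1^{-1}([x])$ is a linear subspace of codimension $s$ in $\cP$, and its projectivization is an irreducible projective linear space of dimension $\dim\cP-s-1$. Every fiber of $\pi_1$ is therefore irreducible of the same dimension, and $\PP^{n-1}$ is irreducible. By \cref{lem:irred}, $\widetilde{\cR}$ is irreducible, and by \cref{lem:dimension} we get $\dim\widetilde{\cR}=(n-1)+\dim\cP-s-1$.

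\textbf{The image.} The image $\pi_2(\widetilde{\cR})$ is closed, being the image of a projective variety, and it is irreducible, being the image of an irreducible variety. Its affine cone $\cR_{d_1,\dots,d_s}$ is then an irreducible affine variety. To obtain the codimension, we show that the generic fiber of $\pi_2$ is finite. By \cref{lem:dimension}, $\dim\pi_2(\widetilde{\cR})=\dim\widetilde{\cR}-r$, where $r$ is the minimal fiber dimension, so it suffices to exhibit one point of the image with a finite fiber.

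This is the only step needing care. Take $f_i=\ell_i^{d_i}$ for $i\le n-1$, where $\ell_1,\dots,\ell_{n-1}$ are independent linear forms vanishing exactly at the point $p=[0:\dots:0:1]$, for example $\ell_i=x_i$. Take $f_i=x_1^{d_i}$ for $i\ge n$. These forms have $p$ as their only common zero, so the fiber over this point is a single point and $r=0$. Upper semicontinuity guarantees that the minimum fiber dimension is $0$, so
\[
\dim\pi_2(\widetilde{\cR})=\dim\widetilde{\cR}=\dim\PP(\cP)-(s-n+1).
\]
Passing to affine cones preserves codimension, so $\cR_{d_1,\dots,d_s}$ has codimension $s-n+1$ in $\cP^{d_1,\dots,d_s}$.

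We expect the main obstacle to be this bookkeeping between the affine and projective settings. The cited results are stated for maps to $\PP^n$ and for projective $X$, so the argument must check that the projectivized incidence variety and its image behave as described, and that the codimension count carries over to the affine cone.
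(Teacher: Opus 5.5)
Your proposal is correct and follows essentially the same route as the paper. Both build the incidence variety in $\PP(\cP^{d_1,\dots,d_s})\times\PP^{n-1}$, whose fibers over $\PP^{n-1}$ are codimension-$s$ linear spaces, and get irreducibility and dimension from \cref{lem:irred} and \cref{lem:dimension}. Both then show the second projection has generic fiber dimension zero using the same witness system ($f_i=x_i^{d_i}$ for $i\le n-1$, $f_i=x_1^{d_i}$ for $i\ge n$), and finally pass to the affine cone.
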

\begin{proof}
Let $\PP(\cP^{d_1,\dots,d_s})$ denote the projectivization of $\cP^{d_1,\dots,d_s}$ and consider the incidence variety \[
	\cI\coloneqq \{ ([\ff],[x])\in \PP(\cP^{d_1,\dots,d_s})\times\PP^{n-1}\mid f_1(x) = f_2(x) =\dots =f_s(x)=0 \}
	\] 
    and the double fibration 	
    \[
	\begin{tikzcd}
	& \cI \arrow[dl] \arrow[dr] & \\
	\PP(\cP^{d_1,\dots,d_s}) && \PP^{n-1}
	\end{tikzcd}
	\]
	given by the canonical projections. 
    We define $\cX\subset\PP(\cP^{d_1,\dots,d_s})$ to be the image of the first projection.
	Since $\cI$ is a projective variety, its image $\cX$ is closed, i.e., also a projective variety.

    We now prove that $\cX$ is irreducible of codimension $s-n+1$.
	Note that $\cI$ surjects onto $\PP^{n-1}$, since we can always find polynomials that vanish on any point $[x]\in\PP^{n-1}$. 
    Moreover, the fiber above $[x]$ is a linear subspace of $\PP(\cP^{d_1,\dots,d_s})$ of codimension $s$.
    By \cref{lem:irred}, $\cI$ is irreducible.
    Hence, \cref{lem:dimension} yields $\dim\cI=\dim\PP(\cP^{d_1,\dots,d_s})+\dim\PP^{n-1}-s$.
    Since $\cI$ is irreducible and $\cX$ is its image, $\cX$ is irreducible.
	Moreover, a generic system $\ff$ with a common solution has only finitely many solutions, i.e., the generic fiber dimension of the map $\cI\rightarrow\cX$ is zero.\footnote{Since $\cX$ is irreducible, the generic fiber dimension coincides with the minimum fiber dimension, hence it is enough to establish one system with finitely many solutions, e.g., one may take $f_i\coloneqq x_i^{d_i}$ for $i=1,\dots,n-1$ and $f_i\coloneqq x_1^{d_i}$ for $i\geq n$.}
    Thus, another application of \cref{lem:dimension} yields $\dim\cX = \dim \cI$, i.e., $\codim\cX=s-n+1$. 

    We just proved that $\cX\subset\PP(\cP^{d_1,\dots,d_s})$ is an irreducible projective variety of codimension $s-n+1$.
    Since $\cR_{d_1,\dots,d_s}\subset\cP^{d_1,\dots,d_s}$ is the affine cone over $\cX$, it is an irreducible affine variety of the same codimension.
\end{proof}

\subsection{The resultant ideal}
\label{sec:resultant-ideal}

We now introduce the universal coefficient ring attached to the degrees
$d_1,\ldots,d_s$. This gives a convenient way to define the resultant variety
and the notion of a resultant system.
We follow the development by Jouanolou \cite{Jouanolou-80,Jouanolou-91}.

For every $i\in [s]$ and every multi-index $\alpha \in\NN^n$ with
$|\alpha|=d_i$, let $y_{i,\alpha}$ be an independent variable.
The following ring, 
\[
	A_{\dd}
	\coloneqq
	\KK \big[y_{i,\alpha} \,:\, i\in [s] ,  |\alpha|=d_i\big],
	\qquad
	\text{ where } \ 
	\dd=(d_1,\ldots,d_s),
\]
is the universal ring of coefficients. 
We define the (generic) homogeneous forms
\[	
	F_i(\xx) \coloneqq
	\sum_{|\alpha|=d_i} y_{i,\alpha}\xx^\alpha
	\in A_{\dd}[x_1,\ldots,x_n],
	\qquad i \in [s].
\]
Thus $\Spec(A_{\dd})$ is the affine space parametrizing all
$s$-tuples of homogeneous forms of degrees $d_1,\ldots,d_s$.

A concrete set of polynomials (or a concrete polynomial system), 
say 
\[
	\Big\{ f_i(\xx)=\sum\nolimits_{|\alpha|=d_i} c_{i,\alpha}\xx^\alpha, \,:\,
		i \in [s] \Big\} \enspace,
\]
is obtained from the universal system by the specialization homomorphism
\[
	\sigma_{\ff} : A_{\dd}\longrightarrow \KK,
	\qquad \text{ where } \quad
	y_{i,\alpha}\longmapsto c_{i,\alpha}.
\]
In particular, for every polynomial $R\in A_{\dd}$, the value $R(\ff)$ means
$\sigma_{\ff}(R)$.

Let
\[
	C_{\dd}=A_{\dd}[x_1,\ldots,x_n],
	\qquad
	\mathfrak m= \ideal{x_1,\ldots,x_n} \subset C_{\dd},
	\qquad \text{ and } \ 
	I_{\dd}= \ideal{F_1,\ldots,F_s}\subset C_{\dd}.
\]
The condition that the specialized system has a non-zero solution is a projective
condition in the variables $\xx$. 
Hence, we obtain the corresponding elimination ideal
after saturating by $\mathfrak m$. We define the resultant ideal by
\[
	\mathfrak R_{\dd}
	\coloneqq
	\bigl(I_{\dd}:\mathfrak m^\infty\bigr)\cap A_{\dd}
	\enspace .
\]
\begin{remark}
The saturation by $\mathfrak m$ removes the spurious affine solution $\xx=0$ and keeps only the projective condition. Equivalently, $\mathfrak R_{\dd}$ consists of the polynomial conditions on the coefficients that vanish whenever the specialized system has a common point in projective space. 
\end{remark}
The zero set of $\mathfrak R_{\dd}$, over the algebraically 
closed field $\KK$,  is the \emph{resultant variety}, that is 
\[
\mathcal R_{\dd} =
	\cZ(\mathfrak R_{\dd}) =
\Big\{
	\ff=(f_1,\ldots,f_s)
	\in
	\prod_{i=1}^s \KK[\xx]_{d_i}
	\;:\;
	\cZ_{\PP}(f_1,\ldots,f_s)\neq \varnothing
\Big\}.
\]

This is the affine version of Jouanolou's universal construction of resultant
ideals: the coefficient ring parametrizes all systems and the resultant ideal cuts
out the locus of systems having a common projective zero
\cite{Jouanolou-80,Jouanolou-91}. It is also compatible with the usual
specialization property of resultants: the identities proved for the universal forms
specialize to identities for any concrete system of forms; see, for instance,
\cite[Chapter~6]{Buse-21}.

We can now define resultant systems intrinsically.

\begin{definition}[Resultant system]
\label{def:resultant-system-intrinsic}
A finite collection of polynomials $\{R_1,\ldots,R_N \} \subset A_{\dd}$
is called a resultant system for $s$ homogeneous forms of degrees
$d_1,\ldots,d_s$ in $n$ variables,
in the sense of \cref{def:resultant-system},  if
\[ 
	\sqrt{\ideal{R_1,\ldots,R_N}}= \sqrt{\mathfrak R_{\dd}}.
\]
Equivalently, for every system
$\ff=(f_1,\ldots,f_s)\in \prod_{i=1}^s\KK[\xx]_{d_i}$, one has
\[
	\cZ_{\PP}(f_1,\ldots,f_s)\neq\varnothing
	\quad\Longleftrightarrow\quad
	R_1(\ff)=\cdots=R_N(\ff)=0.
\]
\end{definition}

When $s=n$, the resultant variety is a hypersurface and the resultant ideal is
principal:
\[
	\mathfrak R_{d_1,\ldots,d_n} = 
	\ideal{\Res(F_1,\ldots,F_n)} \enspace.
\]
Thus, the classical Macaulay resultant is a resultant system with one polynomial.

When $s>n$, the resultant variety has higher codimension. In the generic
homogeneous case, this codimension is $s-n+1$. Therefore one should not expect a
single resultant polynomial. The natural object is the resultant ideal
$\mathfrak R_{\dd}$ and a resultant system is a finite set of polynomials
defining this ideal set-theoretically. The main purpose of this paper is to construct
 such systems of small cardinality by evaluating classical square resultants on suitable linear
combinations of the original forms.

\subsection{Resultant system as coefficients of Macaulay's resultant}
\label{sec:prelim-system}
In this section, we give a proof of \cref{thm:intro-abramov} using Perron's theorem, which states that we can obtain the common zero set of a homogeneous system 
from sufficiently generic square systems of linear combinations:

\begin{theorem}[{\cite[Satz~3]{Perron-41}}]
\label{thm:perron}
    Let $F_1,F_2,\dots,F_s\in\KK[x_1,\dots,x_n]_d$ be $s$ homogeneous polynomials in $n$ variables of degree $d$. 
    For a matrix $\Lambda\in\KK^{n\times s}$ consider the system of $n$ polynomials $G_1,\dots,G_n$, obtained as linear combinations of $F_1,\dots,F_s$: \[
    \begin{split}
    G_1 &\coloneqq \Lambda_{11} F_1 + \dots + \Lambda_{1s}F_s \\
    &\;\vdots\\
    G_n &\coloneqq \Lambda_{n1}F_1 +\dots+\Lambda_{ns}F_s
    \end{split}
    \] If $\Lambda$ is generic, then \[
    \cZ_{\PP}(F_1,\dots,F_s) = \cZ_{\PP}(G_1,\dots,G_n).
    \]
\end{theorem}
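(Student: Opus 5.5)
The inclusion $\cZ_{\PP}(F_1,\dots,F_s)\subseteq\cZ_{\PP}(G_1,\dots,G_n)$ holds for every $\Lambda$, because each $G_i$ is a linear combination of the $F_j$. So only the reverse inclusion needs proof for generic $\Lambda$. I would prove it by a dimension count on an incidence variety, in the same way as \cref{prop:resultant_variety}. Set $Z\coloneqq\cZ_{\PP}(F_1,\dots,F_s)$ and let $U\coloneqq\PP^{n-1}\setminus Z$, a quasi-projective variety of dimension $n-1$ (if $U$ is empty there is nothing to prove). Consider
\[
\cI\coloneqq\{([x],\Lambda)\in U\times\KK^{n\times s}\mid \Lambda\,F(x)=0\},
\]
where $F(x)=(F_1(x),\dots,F_s(x))^T\in\KK^s$. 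The condition $\Lambda F(x)=0$ is homogeneous in $x$, so $\cI$ is well defined. The goal is to show that the projection of $\cI$ to $\KK^{n\times s}$ is not dominant. Then a generic $\Lambda$ lies outside the closure of the image, so it has no common zero of $G_1,\dots,G_n$ outside $Z$, which is exactly the claim.

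For the fiber count, fix $[x]\in U$. Then $F(x)\neq 0$, so the linear map $\Lambda\mapsto\Lambda F(x)$ from $\KK^{n\times s}$ to $\KK^n$ is surjective: each row of $\Lambda$ must lie in the hyperplane $F(x)^{\perp}\subset\KK^s$. The fiber over $[x]$ is therefore a linear space of dimension $ns-n$, irreducible and of constant dimension. By the quasi-projective version of the fiber-dimension argument (\cref{thm:harris-dim}, \cref{cor:quasi-proj-dim}, applied after projectivizing the $\Lambda$-factor, or by working directly with the vector bundle structure), we get $\dim\cI=(n-1)+(ns-n)=ns-1$.

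It follows that the closure of the image of $\cI\to\KK^{n\times s}$ has dimension at most $ns-1<ns$. So it is a proper closed subset, and any $\Lambda$ outside it works.

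The only delicate step is the dimension bound for the image of a quasi-projective variety. It needs nothing more than ``the dimension of the image is at most the dimension of the source,'' and this follows from \cref{cor:quasi-proj-dim} once all fibers are nonempty of nonnegative dimension. One could also avoid this by working with $\PP(\KK^{n\times s})$, since the condition on $\Lambda$ is homogeneous. A further point to check is that ``generic'' means $\Lambda$ lies in a nonempty Zariski open set, namely the complement of that closure, which is exactly what this argument produces.
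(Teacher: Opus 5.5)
Your proof is correct. The paper itself gives no proof of \cref{thm:perron}: it quotes Perron's Satz~3 and only remarks that Perron's proof gives a generic $\Lambda$. So yours is a self-contained argument, built on the same incidence-variety template the paper uses in \cref{prop:resultant_variety}, \cref{cor:linear-resultant-system} and \cref{thm:main-optimal}. Over each $[x]\notin Z$ the condition $\Lambda F(x)=0$ imposes $n$ independent linear conditions, so the bad locus has dimension at most $(n-1)+(ns-n)<ns$.

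The classical route, followed by the constructive versions of Chistov--Grigoriev and Koiran that the paper cites, is inductive. One chooses $G_{k+1}$ so that it does not vanish identically on any irreducible component of $\cZ_{\PP}(G_1,\dots,G_k)$ that is not contained in $Z$. For each such component this only excludes a proper linear subspace of coefficient vectors, because some $F_j$ is not identically zero on it. The components outside $Z$ then drop in dimension at each step, and after $n$ steps none remain. This is exactly the mechanism the paper reuses in \cref{prop:dimension-drop}.

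What each approach buys:
\begin{itemize}
\item Your one-shot count is shorter and directly gives the genericity the paper relies on: the good $\Lambda$ contain the complement of the proper closed set $\overline{\pi(\cI)}$, where $\pi\colon\cI\to\KK^{n\times s}$ is the projection. This is the right notion of ``generic'', since the bad set is in general only constructible. For example, with $F=(x^2,xy,0)$, the matrices with both rows $(1,\epsilon,0)$ are bad for $\epsilon\neq 0$ but not for $\epsilon=0$. Your argument also works in any characteristic.
\item The inductive argument yields explicit finite candidate sets of coefficients via degree bounds and pigeonhole. That is what Section~4 exploits.
\end{itemize}

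One citation needs fixing. The inequality $\dim\overline{\pi(\cI)}\le\dim\cI$ does not follow from \cref{cor:quasi-proj-dim}, because that corollary requires all nonempty fibers to have the same dimension, and this fails for $\pi$. Instead apply \cref{thm:harris-dim} componentwise after embedding $\KK^{n\times s}\subset\PP^{ns}$: for each component, $\dim X_0=\dim Y_0+r\ge\dim Y_0$. Your use of \cref{cor:quasi-proj-dim} for the other projection $\cI\to U$ is fine, since all fibers there are linear spaces of dimension $ns-n$.
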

\noindent 
We remark that Perron only stated the existence of $\Lambda\in\KK^{n\times s}$, but the proof shows that a generic~$\Lambda$ has this property.
See also \cite{Chistov-Grigoriev-83} and \cite[Proposition~1]{Koiran-00} for constructive proofs of Perron's theorem and its applications.

\abramov*

\begin{proof}   
If $\cZ_{\PP}(f_1,\dots,f_s)\neq\varnothing$, then we have $\cZ_{\PP}(f_1,\dots,f_s)\subset\cZ_{\PP}(\sum_\ell \Lambda_{1\ell}f_\ell, \dots, \sum_{\ell}\Lambda_{n\ell} f_\ell)\neq \varnothing$ for every $\Lambda\in\KK^{n\times s}$. 
Hence, the resultant $\Res(\sum_\ell \Lambda_{1\ell}f_\ell, \dots, \sum_{\ell}\Lambda_{n\ell} f_\ell)$ identically vanishes as a polynomial in variables $\Lambda_{ij}, i\in [n], j\in [s]$. 
Hence, the coefficients $R_{\alpha_1,\dots,\alpha_n}$ of the expansion of this resultant vanish.

Conversely, suppose $\cZ_{\PP}(f_1,\dots,f_s)=\varnothing$.
    By Perron's \cref{thm:perron}, there exists a matrix $\Lambda\in\KK^{n\times s}$ such that $\cZ_{\PP}(\sum_{\ell=1}^s \Lambda_{1\ell}f_\ell, \dots,\sum_{\ell=1}^s \Lambda_{n\ell} f_\ell)=\varnothing$.
    Hence, the corresponding resultant $\Res(\sum_{\ell=1}^s \Lambda_{1\ell}f_\ell, \dots,\sum_{\ell=1}^s \Lambda_{n\ell} f_\ell)$ does not vanish.
    In particular, at least one of the coefficients $R_{\alpha_1,\dots,\alpha_n}$ does not vanish.
    This finishes the proof.
\end{proof}

\section{Proof of the main theorem}
\label{sec:main}

In this section, we prove 
\cref{thm:main-optimal}.
\subsection{Systems of linear equations}
\label{sec:linear-case}
First, we consider the linear case, $d=1$; \cref{cor:linear-resultant-system}.
We will prove it without using \cref{thm:main-optimal} as a preparation for the proof of the general case.

Given a matrix $C\in\KK^{s\times n}$, we consider the projective variety \[
\Sigma_C \coloneqq \{[\Lambda]\in\PP(\KK^{n\times s})\mid \det(\Lambda C)=0\}.
\] Note that $\Sigma_C$ is well-defined since $\det(\Lambda C)$ is a homogeneous polynomial.
\begin{lemma}
\label{lem:grassmannian}
$\Sigma_C$ depends only on the column space of $C$.
\end{lemma}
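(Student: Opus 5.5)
The plan is to show that if $C$ and $C'$ in $\KK^{s\times n}$ have the same column space, then $\det(\Lambda C)$ and $\det(\Lambda C')$ agree up to a non-zero scalar (when the column space has dimension $n$), or both vanish identically (otherwise). Either way their zero loci in $\PP(\KK^{n\times s})$ coincide.

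First I treat the rank-deficient case. If $\rk C<n$, then $\rk(\Lambda C)\le \rk C<n$ for every $\Lambda$, so $\det(\Lambda C)=0$ identically and $\Sigma_C=\PP(\KK^{n\times s})$. Rank is determined by the column space, so any $C'$ with the same column space also has $\Sigma_{C'}=\PP(\KK^{n\times s})$.

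Next I treat the full-rank case, where the column space has dimension $n$. If $C$ and $C'$ have the same column space, then both sets of $n$ columns are bases of the same $n$-dimensional subspace of $\KK^s$. Hence there is an invertible $g\in\GL(n)$ with $C'=Cg$, where $g$ is the change-of-basis matrix. Then
\[
\det(\Lambda C')=\det(\Lambda C g)=\det(g)\,\det(\Lambda C),
\]
and $\det(g)\neq 0$. So the two homogeneous polynomials in $\Lambda$ differ by a non-zero constant factor and define the same hypersurface, giving $\Sigma_{C'}=\Sigma_C$.

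Nothing here is a real obstacle. The only point needing care is to split into the rank-deficient and full-rank cases, since the factorization $C'=Cg$ with $g$ invertible holds only when the columns form a basis. One should also note, as the paper does, that $\Sigma_C$ is well defined because the polynomial is homogeneous of degree $n$ in $\Lambda$.
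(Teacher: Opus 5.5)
Your proof is correct and is essentially the paper's argument: the paper also writes $C'=Cg$ with $g\in\GL(n)$ and uses $\det(\Lambda Cg)=\det(\Lambda C)\det(g)$. The only difference is your case split, which is harmless but unnecessary. Contrary to your closing remark, two $s\times n$ matrices with the same column space $W$ differ by an invertible $g$ on the right even when $\dim W<n$, since both are surjections $\KK^n\to W$ with kernels of equal dimension; this is why the paper can do it in one line.
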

\begin{proof}
A matrix has the same column space as $C$ if and only if it equals $Cg$ for some $g\in \GL(n)$.
Then, $\det(\Lambda Cg)=\det(\Lambda C)\det(g)$ so $\Lambda \in \Sigma_C$ if and only if $\Lambda \in\Sigma_{Cg}$, i.e., $\Sigma_C=\Sigma_{Cg}$.
\end{proof}
\cref{lem:grassmannian} hints at considering the \emph{Grassmannian} \[
\GG(n,s) \coloneqq \{ L \mid L\text{ is an }n\text{ dimensional subspace of }\KK^s\}.
\] 
The Grassmannian $\GG(n,s)$ is an irreducible projective variety of dimension $n(s-n)$, see \cite[Lectures 6 and 11]{Harris-93}.
For a fixed subspace $L\in\GG(n,s)$, we define the variety \[
\Sigma_L\coloneqq \Sigma_{C_L}= \{ [\Lambda]\in\PP(\KK^{n\times s}) \mid \det(\Lambda C_L)=0 \},
\]
where $C_L\in\KK^{s\times n}$ denotes any matrix with column space $L$.
By \cref{lem:grassmannian}, $\Sigma_L$ only depends on $L$ and not on a specific choice of $C_L$.

\begin{lemma}
\label{lem:det-codim}
$\Sigma_L$ is an irreducible projective variety of codimension one.
\end{lemma}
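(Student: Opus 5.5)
Fix $L\in\GG(n,s)$ and a matrix $C_L\in\KK^{s\times n}$ with column space $L$, so $C_L$ has rank $n$. The plan is to show that $\Lambda\mapsto\det(\Lambda C_L)$ is, after a linear change of coordinates on $\KK^{n\times s}$, the determinant of an $n\times n$ block of coordinates. Then $\Sigma_L$ is a cone over the classical determinantal hypersurface, and both irreducibility and codimension one follow from well-known facts about $\det$.

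\textbf{Step 1: normalize $C_L$.} Since $\rk C_L=n$, there is $h\in\GL(s)$ with $hC_L=E\coloneqq\begin{pmatrix} I_n\\ 0\end{pmatrix}$. For instance, extend a basis of $L$ to a basis of $\KK^s$ and let $h$ be the inverse of the corresponding change of basis. The map $\Lambda\mapsto\Lambda h^{-1}$ is a linear automorphism of $\KK^{n\times s}$, and it induces a projective automorphism of $\PP(\KK^{n\times s})$. Since
\[
\det(\Lambda C_L)=\det\bigl((\Lambda h^{-1})(hC_L)\bigr)=\det(\Lambda h^{-1}E),
\]
this automorphism carries $\Sigma_L$ onto $\Sigma_E$. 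Projective automorphisms preserve irreducibility and codimension, so it suffices to treat $C=E$.

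\textbf{Step 2: the case $C=E$.} Write $\Lambda=(\Lambda'\mid\Lambda'')$ with $\Lambda'\in\KK^{n\times n}$. Then $\det(\Lambda E)=\det\Lambda'$. So $\Sigma_E$ is the zero locus of the single homogeneous polynomial $\det\Lambda'$ in the coordinates of $\PP(\KK^{n\times s})$, where the coordinates of $\Lambda''$ do not appear.

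The polynomial $\det\Lambda'$ is non-constant (of degree $n\geq 1$) and irreducible. This is classical; one quick argument is that $\det$ is linear in each entry. A factorization $\det=pq$ would force each variable $\lambda_{ij}$ to appear in exactly one factor. If $\lambda_{11}$ lies in $p$, then every entry of row $1$ and column $1$ also lies in $p$, because the monomials containing $\lambda_{11}\lambda_{1j}$ vanish and the variables appearing in a product must separate accordingly. Propagating this through rows and columns puts all variables in $p$, so $q$ is constant.

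\textbf{Step 3: conclude.} Since $\det\Lambda'$ is an irreducible polynomial, its zero set is an irreducible hypersurface, by the Nullstellensatz (the ideal $\ideal{\det\Lambda'}$ is prime). A hypersurface cut out by a single non-constant form has codimension exactly one. Transporting back by the automorphism of Step 1 gives the statement for $\Sigma_L$.

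The only non-formal point is the irreducibility of the determinant. I expect to simply cite it as standard; if a self-contained argument is desired, I would use the multilinearity argument sketched in Step 2.
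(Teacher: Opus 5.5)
Your proposal is correct and follows essentially the same route as the paper: you normalize $C_L$ to $\begin{pmatrix} I_n\\ 0\end{pmatrix}$ by a $\GL(s)$ change of coordinates $[\Lambda]\mapsto[\Lambda h^{-1}]$ on $\PP(\KK^{n\times s})$, and then use the irreducibility of the determinant of the leading $n\times n$ block. Your multilinearity sketch is a valid self-contained proof that $\det$ is irreducible, a fact the paper simply cites.
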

\begin{proof}
If $L,L'\in\GG(n,s)$, then $C_L$ and $C_{L'}$ are $s\times n$ matrices of rank $n$.
Hence, there exists an invertible matrix $h\in\GL(s)$ with $C_L=hC_{L'}$ by Gaussian elimination.
This shows that the varieties $\Sigma_L$ and $\Sigma_{L'}$ are isomorphic via the map $[\Lambda]\mapsto [\Lambda h]$.
Thus, it is enough to prove the theorem for the block matrix $C_L\coloneqq\begin{pmatrix}
    I_n \\
    0
\end{pmatrix}$, whose column space $L$ is the span of the first $n$ standard basis vectors.
But, $\Sigma_{C_L}$ then consists of points $[\Lambda]$ such that the determinant of the $n\times n$-block obtained by taking the first $n$ columns of $\Lambda$ is zero.
Since the determinant is an irreducible polynomial, $\Sigma_{C_L}$ is an irreducible projective variety of codimension $1$.
\end{proof}

We can now prove \cref{cor:linear-resultant-system}.

\linearcor*

\begin{proof}
If $\rk C<n$, then $\det(\Lambda_i C)=0$ for every $i=1,\dots,N$.
Hence, we only need to prove that if $\rk C=n$, then there exists $i$ such that $\det(\Lambda_i C)\neq 0$.

For $k\in\ZZ_{>0}$ consider the incidence variety \[
    \cI_k \coloneqq \{(L,[\Lambda_1],\dots,[\Lambda_k])\in \GG(n,s)\times\PP(\KK^{n\times s})^{\times k}\mid \det(\Lambda_1 C_L)=\dots=\det(\Lambda_k C_L)=0\},
    \] where $C_L\in\KK^{s\times n}$ denotes any matrix with column space $L$.
    For a fixed $L\in\GG(n,s)$, the fiber above $L$ of the projection of $\cI_k$ onto $\GG(n,s)$ is isomorphic to the $k$-fold Cartesian product $\Sigma_L^{\times k}$, which is irreducible of codimension $k$ by \cref{lem:det-codim}. 
    Hence, \cref{lem:irred} implies that $\cI_k$ is irreducible and an application of
    \cref{lem:dimension} yields \[
    \dim\cI_k = \dim\GG(n,s)+\dim\left(\PP(\KK^{n\times s})^{\times k}\right)-k = n(s-n)+(sn-1)k-k.
    \] If $k> n(s-n)$, then $\dim\cI_k<(sn-1)k$.
    Hence, the image of $\cI_k$ is a proper closed subset of $\PP(\KK^{n\times s})^{\times k}$.

    This proves for $N\coloneqq n(s-n)+1$ that if $(\Lambda_1,\dots,\Lambda_N)$ is a generic point in $(\KK^{n\times s})^N$, then $([\Lambda_1],\dots,[\Lambda_N])$ is not contained in the image of $\cI_N$.
    Hence, for every $L\in\GG(n,s)$ there exists $i$ such that $\det(\Lambda_i C_L)\neq 0$. 
    Equivalently, for every matrix $C$ of rank $n$, there exists~$i$ such that $\det(\Lambda_i C)\neq 0$.
    This finishes the proof.
\end{proof}

\begin{remark}
    We note that the assumption $\mathrm{char}(\KK)=0$ is unnecessary for this proof.
\end{remark}

\subsection{Projective GIT quotient for systems of polynomials}
\label{sec:git}
The proof of \cref{thm:main-optimal} closely follows the outline of the proof in \cref{sec:linear-case}.
The main complication stems from finding the correct analog of the Grassmannian $\GG(n,s)$ for systems of $s$ homogeneous forms of degree $d$ in $n$ variables. 
We will achieve this by considering Mumford's \emph{projective GIT quotient} $\PP(\cP^{d,\dots,d})\sslash\SL(n)$ \cite{MFK-94}.
This is an irreducible projective variety of dimension $\binom{d+n-1}{n-1}s-n^2$ and contains a dense open subset that parametrizes systems with no common non-trivial solution up to a linear change of variables.
When $d=1$, we have $\PP(\cP^{d,\dots,d})\cong\PP(\KK^{s\times n})$ and the GIT quotient with respect to the action of $\SL(n)$ on $\KK^{s\times n}$ by right--multiplication coincides with the Grassmannian $\GG(n,s)\cong\PP(\KK^{s\times n})\sslash \SL(n)$.

The main theorem of this section is \cref{thm:git}, which states the existence and the properties of the projective GIT quotient for systems of homogeneous forms of arbitrary degree.
Our presentation of the subject follows the lecture notes by Victoria Hoskins on GIT quotients, we will use in particular \cite[Theorem~4.11]{Hoskins-12}.
The same results can also be found in \cite{MFK-94}.

We denote by $\SL(n)\coloneqq\{g\in\KK^{n\times n}\mid \det (g)=1\}$ the group of $n\times n$-matrices of unit determinant with entries in $\KK$. 
We view $\SL(n)$ as an $(n^2-1)$-dimensional affine variety in $\KK^{n\times n}$ whose defining equation is $\det(g)=1$.
It acts on $\KK[\xx]_d$ via $
(g\cdot f)(\xx)\coloneqq f(g^{-1}\cdot \xx)$.\footnote{The inversion makes this a left--action.} For example, \[
\begin{pmatrix}
    1 & 0\\
    1 & 1
\end{pmatrix} \cdot (x_1^2 + x_2^2) = (x_1-x_2)^2 + x_2^2 = x_1^2 -2 x_1 x_2 + 2 x_2^2.
\] The action of $\SL(n)$ on $\KK[\xx]_d$ extends to $s$-tuples in $\KK[\xx]_d^{s}$ by defining \[
g\cdot (f_1,f_2,\dots,f_s) \coloneqq (g\cdot f_1, g\cdot f_2,\dots, g\cdot f_s).
\]
Our first observation is that the existence of a non-zero solution stays invariant under the action of $\SL(n)$:
\begin{lemma}
\label{lem:orbit-solution}
    Let $\ff=(f_1,\dots,f_s)\in\cP^{d,\dots,d}$ and $g\in\SL(n)$.
    Then, $\ff$ has a non-zero solution if and only if $g\cdot \ff$ has a non-zero solution.
\end{lemma}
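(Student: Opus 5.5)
The plan is to exhibit an explicit bijection between the common zero sets of $\ff$ and $g\cdot\ff$, induced by the linear automorphism $g$ of $\KK^n$. By definition of the action, $(g\cdot f_i)(\xx)=f_i(g^{-1}\xx)$ for each $i\in[s]$.

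Suppose first that $x\in\KK^n\setminus\{0\}$ is a common zero of $f_1,\dots,f_s$. We set $y\coloneqq g x$. Then
\[
(g\cdot f_i)(y)=f_i(g^{-1}gx)=f_i(x)=0 \quad\text{for all } i\in[s],
\]
so $y$ is a common zero of $g\cdot\ff$. Since $g$ is invertible and $x\neq 0$, we also have $y\neq 0$. Hence $g\cdot\ff$ has a non-zero solution.

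For the converse, we apply the same argument with $g^{-1}\in\SL(n)$ in place of $g$ and $g\cdot\ff$ in place of $\ff$. This uses $g^{-1}\cdot(g\cdot\ff)=\ff$, which holds because the action is a left action: explicitly, $(g^{-1}\cdot(g\cdot f))(\xx)=(g\cdot f)(g\xx)=f(\xx)$.

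Together, the two directions show more precisely that $\cZ_{\PP}(g\cdot\ff)=g\cdot\cZ_{\PP}(\ff)$. The argument has no real obstacle; the only points to check are the direction of the inversion in the action and the fact that invertibility of $g$ keeps solutions non-zero.
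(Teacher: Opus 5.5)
Your proof is correct and is essentially the paper's argument. The paper observes in one line that $[x]$ is a common zero of $\ff$ if and only if $[gx]$ is a common zero of $g\cdot\ff$, which is your substitution $y=gx$, with the converse obtained from $g^{-1}$.
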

\begin{proof}
    $[x]\in\PP^{n-1}$ is a common solution of $f_1,\dots,f_s$ if and only if $[g x]\in\PP^{n-1}$ is a common solution of $g\cdot f_1,\dots,g\cdot f_s$.
\end{proof}
We say a set $S\subset\cP^{d,\dots,d}$ is $\SL(n)$-invariant if $g\cdot S\subset S$ for every $g\in\SL(n)$.
\cref{lem:orbit-solution} implies that the set of systems $\ff$ with a common non-zero solution, i.e., the resultant variety, is $\SL(n)$-invariant.
This suggests to \emph{mod out} the action of $\SL(n)$ on $\cP^{d,\dots,d}$ if we want to study the existence of non-zero solutions.
We achieve this via the projective GIT quotient $
\cX\coloneqq \PP(\cP^{d,\dots,d})\sslash\SL(n)
$, where $\PP(\cP^{d,\dots,d})\cong\PP^{\binom{d+n-1}{n-1}s-1}$ denotes the projectivization of $\cP^{d,\dots,d}$.

In order to construct the quotient, we need to consider the \emph{ring of invariants} of $\cP^{d,\dots,d}$:
Let $\KK[\cP^{d,\dots,d}]$ denote the affine coordinate ring of $\cP^{d,\dots,d}$.
The action of $\SL(n)$ on $\cP^{d,\dots,d}$ induces an action on $\KK[\cP^{d,\dots,d}]$ via \[
(g\cdot F)(\ff) = F(g^{-1}\cdot \ff), \quad F\in \KK[\cP^{d,\dots,d}], \; \ff\in\cP^{d,\dots,d}, \; g\in \SL(n).
\] We call $F$ an invariant if $g\cdot F=F$ for every $g\in\SL(n)$.
We denote the $\KK$-algebra of invariants by $\KK[\cP^{d,\dots,d}]^{\SL(n)}$.

Our aim in the remainder of this section is to prove the following theorem:
\begin{theorem}
\label{thm:git}
    There exist a projective variety $\cX$, an open subset $\PP(\cP^{d,\dots,d})^{ss}\subset\PP(\cP^{d,\dots,d})$ (called the semistable locus) and a surjective map $\phi:\PP(\cP^{d,\dots,d})^{ss}\rightarrow\cX$ with the following properties:
    \begin{enumerate}
        \item If $Z\subset \PP(\cP^{d,\dots,d})^{ss}$ is an $\SL(n)$-invariant closed set, then $\phi(Z)$ is closed.
        Moreover, a set $V\subset \cX$ is open if and only if $\phi^{-1}(V)$ is open.
        \item For $[\ff],[\ff']\in\PP(\cP^{d,\dots,d})^{ss}$ we have $\phi([\ff])=\phi([\ff'])$ if and only if the following holds: For every homogeneous invariant $F\in \KK[\cP^{d,\dots,d}]^{\SL(n)}$ we have $F(\ff)= 0 \iff F(\ff')=0$.
        \item If $\ff=(f_1,\dots,f_s)\in\cP^{d,\dots,d}$ has no common non-zero solution, then $[\ff]\in \PP(\cP^{d,\dots,d})^{ss}$.
        Moreover, the set of all $[\ff]$ with this property forms a non-empty $\SL(n)$-invariant open subset of $\PP(\cP^{d,\dots,d})^{ss}$.
        \item There exists a non-empty open dense subset $\cX^s\subset \cX$ such that $\phi:\phi^{-1}(\cX^s)\rightarrow\cX^s$ is an orbit space, i.e., for every $p\in\cX^s$, the fiber $\phi^{-1}(p)$ consists of a single $\SL(n)$-orbit.
        \item $\cX$ is an irreducible projective variety of dimension $\dim\cX = \binom{d+n-1}{n-1}s-n^2$.
    \end{enumerate}
\end{theorem}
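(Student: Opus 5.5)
We will take $\cX \coloneqq \Proj \KK[\cP^{d,\dots,d}]^{\SL(n)}$ and let $\PP(\cP^{d,\dots,d})^{ss}$ be the complement of the nullcone, i.e.\ the set of $[\ff]$ at which some homogeneous invariant of positive degree is non-zero. The map $\phi$ is the rational map given by a collection of homogeneous invariants generating a Veronese subring. Since $\SL(n)$ is reductive and $\mathrm{char}\,\KK=0$, Hilbert's finiteness theorem shows that the invariant ring is finitely generated, so $\cX$ is a projective variety and $\phi$ is a morphism on the semistable locus. Properties (1) and (2) are then the standard statements of \cite[Theorem~4.11]{Hoskins-12}: $\phi$ is a good categorical quotient, hence it is surjective, it sends invariant closed sets to closed sets, and it carries the quotient topology. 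Moreover, two semistable points have the same image if and only if their orbit closures meet. On $\Proj$, this is equivalent to saying that the two points lie in the same homogeneous prime of invariants, which is the criterion stated in (2). We will simply quote these facts.

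For (3) we use the Macaulay resultant. Suppose $\ff$ has no common zero. By Perron's \cref{thm:perron} there is a $\Lambda$ with $\Res(\Lambda\ff)\neq0$. The function $F_\Lambda(\ff)\coloneqq \Res(\sum_\ell\Lambda_{1\ell}f_\ell,\dots)$ is a homogeneous polynomial in the coefficients, and it is $\SL(n)$-invariant: the resultant transforms by a power of $\det g$ under a linear change of variables, and this power equals $1$ on $\SL(n)$. Hence $F_\Lambda$ is a non-vanishing invariant at $[\ff]$, so $[\ff]$ is semistable. The set of such $\ff$ is the complement of the resultant variety, which is closed and invariant by \cref{prop:resultant_variety} and \cref{lem:orbit-solution}; it is non-empty since $s\ge n$ (take $f_i=x_i^d$).

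For (4) and (5) we show that these same points are in fact stable, i.e.\ their orbits are closed in the semistable locus and their stabilizers are finite. Finite stabilizer: if $g\cdot\ff=\ff$ with no common zero, then $g$ preserves the finite-codimension ideal $\langle f_1,\dots,f_s\rangle$, and the stabilizer of a system with empty zero set is finite. This is a classical fact (e.g.\ the stabilizer of a general $n$-tuple of forms of degree $d\ge2$ is finite). Note that for $d=1$ the stabilizer is not finite in $\SL(n)$ acting on $s\times n$ matrices; there, however, the action of $\SL(n)$ on full-rank matrices is free, so this case also works. Closed orbit: we use the Hilbert--Mumford criterion. If a one-parameter subgroup $\lambda$ had a limit $\lim_{t\to0}\lambda(t)\cdot\ff$, then after diagonalizing $\lambda$ with weights $w_1\ge\dots\ge w_n$, $\sum w_i=0$, every monomial occurring in the $f_i$ would have non-negative weight. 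All the $f_i$ would then vanish at the coordinate point of smallest weight, contradicting the absence of a common zero. Therefore these points are properly stable. Since the stable locus is open and the points of (3) are stable, its image $\cX^s$ is a non-empty open set, and over it $\phi$ is a geometric quotient, which gives (4). Finally, $\cX$ is irreducible because it is the image of the irreducible variety $\PP(\cP^{d,\dots,d})^{ss}$, so $\cX^s$ is dense. Applying \cref{cor:quasi-proj-dim} to $\phi$ restricted to $\phi^{-1}(\cX^s)$, all fibers are orbits of dimension $n^2-1$, since the stabilizers are finite. This gives $\dim\cX=\binom{d+n-1}{n-1}s-1-(n^2-1)$, which is (5).

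The main obstacle is the stability step: proving that the stabilizer is finite and the orbit is closed for every system with no common zero. The Hilbert--Mumford argument only shows that no one-parameter subgroup has a limit, i.e.\ closedness in the affine cone. Finiteness of the stabilizer should then follow because a positive-dimensional stabilizer contains a torus, or a unipotent group, acting trivially; in the torus case we can repeat the weight argument with equality, and we would have to treat the unipotent case separately. We would first try to avoid this issue by noting that for (5) it suffices to exhibit one system with finite stabilizer and closed orbit, and then to use upper semicontinuity of fiber dimension.
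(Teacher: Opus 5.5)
Your proposal is correct. For (1)--(3) and for the dimension count in (5) it matches the paper: \cite[Theorem~4.11]{Hoskins-12} for the good quotient, points of $\Proj$ as homogeneous primes for (2), the invariant $\Res(\Lambda\cdot\ff)$ for semistability, and fibers of dimension $n^2-1$ over $\cX^s$.

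The genuine difference is how you produce stable points for (4). The paper proves only that a \emph{generic} tuple is stable. It checks by hand that $(x_1^d,\dots,x_n^d,0,\dots,0)$ has finite stabilizer (\cref{lem:stabilizer}), spreads this to a generic tuple by semicontinuity of stabilizer dimension, and then quotes Luna's theorem: for a semisimple group, finite generic stabilizers force generic orbits to be closed. You instead run the Hilbert--Mumford weight computation on an arbitrary $\ff$ with $\cZ_{\PP}(\ff)=\varnothing$. For a nontrivial diagonalized one-parameter subgroup whose limit exists, the pure power $x_j^d$ at the extreme weight cannot occur in any $f_i$, so $e_j$ would be a common zero.

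This bypasses Luna and proves more: the whole open set $U$ from (3) lies in the stable locus. So $\phi|_U$ is already an orbit space onto $\phi(U)\subset\cX^s$, which also gives $U=\phi^{-1}(\phi(U))$, used in the proof of \cref{thm:main-optimal}, for free. It is the natural analogue of the Grassmannian picture in the linear case. The price is reliance on the full Hilbert--Mumford criterion rather than an explicit computation at one point.

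The obstacle you raise in your last paragraph is not real, so drop the hedging and the fallback. The Hilbert--Mumford criterion for stability is an equivalence: $\ff$ has closed orbit \emph{and} finite stabilizer if and only if no nontrivial one-parameter subgroup has a limit at $\ff$. Indeed, the stabilizer of a closed orbit is reductive by Matsushima, so if it were positive-dimensional it would contain a nontrivial torus, whose limit at $\ff$ is $\ff$ itself. Your weight computation therefore already gives finite stabilizers.

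The ``classical fact'' you cite is unnecessary. As stated, for \emph{general} $n$-tuples, it would not cover every system anyway. If you want to avoid Matsushima, the unipotent case takes two lines. Suppose $\exp(tN)$ fixes all $f_i$ with $N\neq0$ nilpotent, and take $x$ and $k\ge1$ with $N^kx\neq0=N^{k+1}x$. The coefficient of $t^{kd}$ in the constant polynomial $f_i(\exp(tN)x)$ is $f_i(N^kx)/(k!)^d$, so $[N^kx]$ is a common zero.

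Your $d=1$ aside is self-contradictory. A full-rank $s\times n$ matrix has trivial, hence finite, stabilizer in $\SL(n)$, so no exception is needed.

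Finally, the fiber count needs finiteness of the stabilizer of $[\ff]$, not just of $\ff$. Your computation covers this too: applied to $\lambda$ and $\lambda^{-1}$, it shows the weights occurring in $\ff$ take both strict signs, so no torus rescales $\ff$. The paper instead cites \cite[Proposition~5.1]{Hoskins-12}.
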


By the Hilbert--Nagata finiteness theorem (\cite[\S 3.3]{Hoskins-12}), the invariant ring
$\KK[\cP^{d,\dots,d}]^{\SL(n)}$ is generated by a finite set of homogeneous polynomials in $\KK[\cP^{d,\dots,d}]$.
We denote by $\cX\coloneqq \Proj(\KK[\cP^{d,\dots,d}]^{\SL(n)})$ the projective variety that corresponds to $\KK[\cP^{d,\dots,d}]^{\SL(n)}$, see \cref{sec:varieties}.
The inclusion $\KK[\cP^{d,\dots,d}]^{\SL(n)}\xhookrightarrow{} \KK[\cP^{d,\dots,d}]$ induces a map $\phi:\PP(\cP^{d,\dots,d})^{ss}\rightarrow\cX$ on the locus of \emph{semistable} points defined as follows:
\begin{definition}
    A tuple $\ff\in\cP^{d,\dots,d}$ is called
    \begin{itemize}
        \item \emph{semistable} if there exists a non-constant homogeneous invariant $F\in \KK[\cP^{d,\dots,d}]^{\SL(n)}$ such that $F(\ff)\neq 0$.
    We denote by $(\cP^{d,\dots,d})^{ss}$ the set of all semistable tuples.
    \item It is called \emph{stable} if it is semistable, its orbit $\SL(n)\cdot\ff$ is Zariski closed, and its stabilizer \[
    \SL(n)_{\ff} \coloneqq \{g\in\SL(n)\mid \forall \ell=1,\dots,s, \; g\cdot f_\ell = f_\ell\}
    \] is finite. 
    We denote by $(\cP^{d,\dots,d})^{s}$ the set of all stable tuples. 
    \end{itemize}
\end{definition}

It easily follows from the definition that $(\cP^{d,\dots,d})^{ss}$ is an open subset of $\cP^{d,\dots,d}$ whose complement is the zero locus of all non-constant homogeneous invariants.\footnote{This locus is called the \emph{nullcone} of the action in the literature.}
It is known that the stable locus $(\cP^{d,\dots,d})^s$ is also an open subset of $\cP^{d,\dots,d}$, see \cite[Proposition~3.19]{Hoskins-12}.

If $\ff$ is semistable and $\lambda\in\KK^{\times}$ is a non-zero scalar, then $\lambda\ff$ is also semistable.
We will denote \[
\PP(\cP^{d,\dots,d})^{ss}\coloneqq \{[\ff]\in\PP(\cP^{d,\dots,d})\mid \ff\text{ is semistable}\},
\] which is an open subset of $\PP(\cP^{d,\dots,d})$.
We will now prove that the systems $\ff$ with no common solution are semistable.
To this end, we consider the invariants of $\cP^{d,\dots,d}$ that arise from Macaulay's resultant of linear combinations of the original system.
\begin{lemma}
\label{lem:resultant-invariance}
    Let $\Lambda\in\KK^{n\times s}$ be a matrix.
    Then the following is an $\SL(n)$-invariant: \[
    \Res(\Lambda\cdot\ff)\coloneqq \Res(\Lambda_{11}f_1+\dots+\Lambda_{1s}f_s,\,\dots,\, \Lambda_{n1}f_1+\dots+\Lambda_{ns}f_s)
    \] 
\end{lemma}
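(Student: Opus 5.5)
The plan is to combine two ingredients: the behaviour of Macaulay's resultant under a linear change of variables, and the fact that every element of $\SL(n)$ has determinant one. Set $G_i \coloneqq \sum_{j}\Lambda_{ij} f_j$ for $i=1,\dots,n$. The $\SL(n)$-action is linear and acts componentwise on tuples, so for $g\in\SL(n)$ we get
\[
\Lambda\cdot(g\cdot\ff) = \Bigl(\sum\nolimits_j \Lambda_{1j}\, (g\cdot f_j),\dots,\sum\nolimits_j \Lambda_{nj}\,(g\cdot f_j)\Bigr) = (g\cdot G_1,\dots,g\cdot G_n) = g\cdot(\Lambda\cdot \ff).
\]
In other words, forming linear combinations commutes with the action. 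This reduces the lemma to the square case: we need $\Res(g\cdot G_1,\dots,g\cdot G_n)=\Res(G_1,\dots,G_n)$ for all $g\in\SL(n)$. The claimed invariance $F(g^{-1}\cdot\ff)=F(\ff)$ then follows by applying this identity with $g^{-1}$ in place of $g$.

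For the square case I would use the classical change-of-variables formula for Macaulay's resultant. For forms $G_1,\dots,G_n$ of degree $d$ and any $h\in\KK^{n\times n}$,
\[
\Res\bigl(G_1(h\xx),\dots,G_n(h\xx)\bigr)=\det(h)^{d^{n}}\,\Res(G_1,\dots,G_n);
\]
see e.g. Jouanolou, or Cox--Little--O'Shea, where the exponent is $d_1\cdots d_n$. Taking $h=g^{-1}$, which has determinant $1$, gives the result at once.

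If I want to avoid citing this formula, I would argue through irreducibility. The polynomial $P(h,\mathbf{G}) \coloneqq \Res(\mathbf{G}\circ h)$ vanishes exactly where $\det h=0$ or $\Res(\mathbf{G})=0$. Since $\Res$ is irreducible, this gives $P=c\,\det(h)^a\Res(\mathbf{G})^b$. Comparing degrees in the coefficients of $\mathbf{G}$ forces $b=1$, and the specialization $h=I$ forces $c=1$.

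The main subtle point is this irreducibility and degree bookkeeping. On $\SL(n)$ alone there is a cleaner route: $g\mapsto \Res(g\cdot \mathbf{G})/\Res(\mathbf{G})$ is, on the open set where $\Res(\mathbf{G})\neq 0$, a regular nowhere-vanishing function on $\SL(n)$. Because $\SL(n)$ has no nontrivial characters, and every unit of $\KK[\SL(n)]$ is a constant, this function is constant. It equals $1$ at $g=I$, so the identity holds on a dense set and therefore everywhere by continuity.
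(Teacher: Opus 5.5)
Your proposal is correct and follows the paper's proof: both note that forming the linear combinations commutes with the $\SL(n)$-action, which reduces the lemma to the $\SL(n)$-invariance of Macaulay's resultant for square systems, and both take that invariance from the classical change-of-variables formula $\Res(\mathbf{G}\circ h)=\det(h)^{d^n}\Res(\mathbf{G})$ (Jouanolou, Cox--Little--O'Shea), using $\det h=1$. Your optional self-contained arguments, via irreducibility of $\Res$ and $\det$ or via units of $\KK[\SL(n)]$ being constant, are also sound; they go slightly beyond the paper, which simply cites the invariance.
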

\begin{proof}
   Consider $h_i\coloneqq \sum_{\ell=1}^s \Lambda_{i\ell} f_\ell$.
    For $g\in\SL(n)$ we have $g\cdot h_i = g\cdot (\sum_{\ell}\Lambda_{i\ell} f_\ell)=\sum_{\ell}\Lambda_{i\ell}(g\cdot f_\ell)$.
    Hence,
    \[
    \Res\big(\sum_{\ell}\Lambda_{1\ell} (g\cdot f_\ell), \dots, \sum_{\ell}\Lambda_{n\ell} (g\cdot f_\ell)\big) = \Res(g\cdot h_1,\dots,g\cdot h_n) = \Res(h_1,\dots,h_n)=\Res(\Lambda\cdot\ff),
    \] where the second equality follows from the $\SL(n)$-invariance of Macaulay's resultant, see e.g. \cite[Theorem (3.5)]{Cox-Little-Oshea-05} or \cite[\S~5.13]{Jouanolou-91}.
    This finishes the proof.
\end{proof}
\begin{corollary}
\label{cor:semistable}
    Suppose $\ff\in\cP^{d,\dots,d}$ has no common non-zero solution.
    Then $\ff$ is semistable.
\end{corollary}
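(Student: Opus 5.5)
The plan is to exhibit an explicit non-constant homogeneous invariant that does not vanish at $\ff$. The natural candidates are the invariants $\Res(\Lambda\cdot\ff)$ of \cref{lem:resultant-invariance}. First, since $\ff$ has no common non-zero solution, Perron's \cref{thm:perron} (applied with $F_\ell=f_\ell$) provides a matrix $\Lambda\in\KK^{n\times s}$ such that
\[
\cZ_{\PP}\Bigl(\sum\nolimits_\ell \Lambda_{1\ell}f_\ell,\dots,\sum\nolimits_\ell\Lambda_{n\ell}f_\ell\Bigr)=\cZ_{\PP}(f_1,\dots,f_s)=\varnothing .
\]
Macaulay's theorem (\cref{ex:macaulay}) then gives $\Res(\Lambda\cdot\ff)\neq 0$.

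Next, we view $F_\Lambda(\ff')\coloneqq\Res(\Lambda\cdot\ff')$ as a function on $\cP^{d,\dots,d}$ for this fixed $\Lambda$. Each entry of $\Lambda\cdot\ff'$ is linear in the coefficients of $\ff'$. Macaulay's resultant of $n$ forms of degree $d$ is multihomogeneous of degree $d^{n-1}$ in the coefficients of each form. Hence $F_\Lambda$ is a homogeneous polynomial in the coordinates of $\cP^{d,\dots,d}$ of total degree $n d^{n-1}\geq 1$.

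Then $F_\Lambda$ is non-constant. To see this, either use homogeneity of positive degree together with $F_\Lambda(\ff)\neq 0$, or note directly that $F_\Lambda(0)=0\neq F_\Lambda(\ff)$. By \cref{lem:resultant-invariance}, $F_\Lambda$ is an $\SL(n)$-invariant. So $F_\Lambda$ is a non-constant homogeneous invariant with $F_\Lambda(\ff)\neq0$, which is exactly semistability of $\ff$.

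There is no real obstacle here. The only point needing care is homogeneity: I must check that substituting linear combinations into a multihomogeneous polynomial yields a homogeneous polynomial in the coefficients of $\ff'$. This holds because the substitution is linear in those coefficients, so total degree is preserved.
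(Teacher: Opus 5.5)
Your proof is correct and follows the paper's argument. The paper obtains $\Lambda$ with $\Res(\Lambda\cdot\ff)\neq 0$ from \cref{thm:intro-abramov}, which it proves via Perron's \cref{thm:perron} (you invoke Perron directly), and concludes with \cref{lem:resultant-invariance}; your check that $\Res(\Lambda\cdot\ff')$ is homogeneous of degree $nd^{n-1}\geq 1$, hence non-constant, spells out a detail the paper leaves implicit.
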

\begin{proof}
Suppose $\cZ_{\PP}(f_1,\dots,f_s)=\varnothing$.
    By \cref{thm:intro-abramov}, there exists $\Lambda\in\KK^{n\times s}$ with $\Res(\Lambda\cdot \ff)\neq 0$.
    By \cref{lem:resultant-invariance}, $\Res(\Lambda\cdot\ff)$ is an $\SL(n)$-invariant that does not vanish on $\ff$.
\end{proof}

We now prove that a generic element $\ff\in\cP^{d,\dots,d}$ is stable.
\begin{lemma}
\label{lem:stabilizer}
    Suppose $d\geq 1$.
    Then, the stabilizer of a generic $\ff\in\cP^{d,\dots,d}$ is finite.
\end{lemma}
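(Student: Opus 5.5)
The plan is to use upper semicontinuity of fiber dimension. This reduces the claim to exhibiting a single tuple $\ff_0\in\cP^{d,\dots,d}$ whose stabilizer is finite. Since the statement concerns $s$-tuples with $s\geq n$ (the standing assumption), I will take the first $n$ entries of $\ff_0$ to be pure powers of the coordinates and let the remaining entries be arbitrary.

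\emph{Step 1 (semicontinuity).} Consider the closed incidence set
\[
\mathcal{S}\coloneqq\{(g,\ff)\in\SL(n)\times\cP^{d,\dots,d}\mid g\cdot\ff=\ff\},
\]
together with its projection $\pi:\mathcal{S}\to\cP^{d,\dots,d}$. The fiber over $\ff$ is $\SL(n)_{\ff}\times\{\ff\}$. The stabilizer $\SL(n)_{\ff}$ is an algebraic group, so its local dimension at every point, in particular at the point $(\mathrm{id},\ff)$, equals $\dim\SL(n)_{\ff}$.
\begin{itemize}
\item Apply the upper semicontinuity statement of \cref{thm:harris-dim} to $\pi$. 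The cited statement is for maps to projective space, so I will compose with an open embedding $\cP^{d,\dots,d}\hookrightarrow\PP^{M}$. This shows that $\{(g,\ff)\in\mathcal{S}\mid \dim_{(g,\ff)}\pi^{-1}(\ff)\geq 1\}$ is closed in $\mathcal{S}$.
\item Intersect this set with the closed section $\{(\mathrm{id},\ff)\}\cong\cP^{d,\dots,d}$. The result is that $\{\ff\mid\dim\SL(n)_{\ff}\geq 1\}$ is Zariski closed in $\cP^{d,\dots,d}$.
\item Since $\cP^{d,\dots,d}$ is irreducible, this closed set is a proper subset, hence nowhere dense, as soon as it misses one point.
\item A zero-dimensional algebraic group is finite. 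So it suffices to find one $\ff_0$ with finite stabilizer.
\end{itemize}

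\emph{Step 2 (explicit point).} Take $f_i=x_i^d$ for $i=1,\dots,n$ and $f_i=x_1^d$ for $i>n$. Suppose $g\in\SL(n)$ fixes $\ff_0$. Write $\ell_i(\xx)$ for the $i$-th coordinate of $g^{-1}\xx$, which is a linear form. Then $g\cdot x_i^d=\ell_i^d$, so the condition is $\ell_i^d=x_i^d$ for all $i\leq n$. By unique factorization in $\KK[\xx]$, this forces $\ell_i=\zeta_i x_i$ with $\zeta_i^d=1$. Hence $g^{-1}$ is diagonal with $d$-th roots of unity on the diagonal. There are at most $d^n$ such matrices, so the stabilizer is finite. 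This argument works uniformly for all $d\geq 1$.

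\emph{Main obstacle.} The only delicate point is Step 1, namely justifying semicontinuity of the stabilizer dimension as a function of $\ff$ from the fiber-dimension theorem stated earlier. I handle this by restricting to the identity section, where the local dimension of the fiber equals the dimension of the stabilizer group. Alternatively, one can use the orbit map $g\mapsto g\cdot\ff$: then $\dim\SL(n)_{\ff}=n^2-1-\dim(\SL(n)\cdot\ff)$, and orbit dimension is lower semicontinuous since it is the rank of the differential of the orbit map at the identity.
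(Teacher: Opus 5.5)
Your proposal is correct and follows essentially the same route as the paper. Upper semicontinuity of $\dim\SL(n)_{\ff}$ reduces the claim to exhibiting one tuple, and the pure-power tuple $(x_1^d,\dots,x_n^d,\dots)$ has stabilizer contained in the diagonal matrices with $d$-th roots of unity on the diagonal. The differences are only cosmetic: the paper cites semicontinuity from Hoskins (Prop.~2.27) where you derive it from \cref{thm:harris-dim} via the identity section, it pads the tuple with $0$ rather than $x_1^d$, and it compares coefficients of $x_j^d$ where you use unique factorization.
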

\begin{proof}
The finiteness of the stabilizer is equivalent to $\dim\SL(n)_{\ff}=0$.
Now, $\dim\SL(n)_{\ff}$ is an upper semi-continuous function of $\ff$, see, e.g., \cite[Proposition~2.27]{Hoskins-12}.
Since $\cP^{d,\dots,d}$ is irreducible, it is enough to exhibit one tuple $\ff\in\cP^{d,\dots,d}$ with a finite stabilizer.

Consider the tuple $\ff=(x_1^d, x_2^d,\dots,x_n^d,0,\dots,0)\in\cP^{d,\dots,d}$. 
We claim that $\SL(n)_{\ff}$ is finite:
If $g\cdot \ff=\ff$, then for each $i=1,\dots,n$ we have $g\cdot x_i^d=x_i^d$. 
Hence, for $h=g^{-1}$ we have \[
\forall i=1,\dots,n,\quad \left(\sum_{j=1}^n h_{ji} x_j\right)^d = x_i^d.
\] By comparing the coefficient of $x_j^{d}$ in the left-- and the right--hand side we deduce that \[
h_{ji}^d = \begin{cases}
    0 & \text{ if }j\neq i\\
    1 & \text{ otherwise. }
\end{cases}
\] Thus, $h=g^{-1}$ is a diagonal matrix whose diagonal entries are $d$-th roots of unity.
This proves that $\SL(n)_{\ff}$ is finite.
\end{proof}

\begin{remark}
In fact, the stabilizer of a single generic polynomial ($s=1$) is already finite in the following cases:
Matsumura and Monsky proved that the stabilizer of a generic polynomial $f$ is finite if $n\geq 4$ and $d\geq 3$ in arbitrary characteristic, see \cite[Theorem~5]{Matsumura-Monsky-63}.
B{\"u}rgisser and Ikenmeyer extended this result to $n\geq 1$ and $d\geq 3$ when $\mathrm{char}(\KK)=0$, see \cite[Theorem~2.3]{BI-17}.
For $d=2$, the stabilizer of a generic quadratic form is isomorphic to the orthogonal group, and not finite.
However, a straightforward calculation shows that the stabilizer of a generic pair of quadratic forms is finite.
\end{remark}

\begin{corollary}
    A generic tuple $\ff\in\cP^{d,\dots,d}$ is stable.
\end{corollary}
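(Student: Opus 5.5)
We need to show that a generic tuple $\ff$ is stable, i.e.\ semistable, with closed orbit and finite stabilizer. The plan is to pick a nonempty open set on which all three properties hold simultaneously, then intersect.

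\textbf{Semistability.} By \cref{cor:semistable}, every tuple without a common non-zero solution is semistable. That set is the complement of the resultant variety $\cR_{d,\dots,d}$, which is a proper closed subset by \cref{prop:resultant_variety}. So semistability holds on a nonempty open set $U_1$.

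\textbf{Finite stabilizer.} By \cref{lem:stabilizer} and upper semicontinuity of $\dim\SL(n)_{\ff}$, the locus $U_2=\{\ff : \dim \SL(n)_{\ff}=0\}$ is a nonempty open set.

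\textbf{Closed orbit.} This is the main step. For $\ff\in U_1\cap U_2$, I would argue as follows.

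\begin{itemize}
\item Since $\dim\SL(n)_{\ff}=0$, the orbit $\SL(n)\cdot\ff$ has dimension $n^2-1$.
\item Its boundary $\overline{\SL(n)\cdot\ff}\setminus \SL(n)\cdot\ff$ is a union of orbits of strictly smaller dimension.
\item The complement $U_1$ of $\cR_{d,\dots,d}$ is $\SL(n)$-invariant by \cref{lem:orbit-solution}. Any orbit in $U_1$ must meet $U_2$ badly or be contained in it, so we need to control boundary orbits lying in $U_1$.
\end{itemize}

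To do this, I would use the characterization of the stable locus via the fiber dimension of the quotient map $\pi:(\cP^{d,\dots,d})^{ss}\to \Spec \KK[\cP^{d,\dots,d}]^{\SL(n)}$. Each fiber of $\pi$ contains a unique closed orbit. Consider the open set where the fiber dimension is minimal; this is open by \cref{thm:harris-dim}.

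A cleaner route is to use that, on $U_1$, any limit point $\ff'$ of $g_t\cdot\ff$ still satisfies $\Res(\Lambda\cdot\ff')=\Res(\Lambda\cdot\ff)$ for every $\Lambda$, by \cref{lem:resultant-invariance}. Hence $\ff'$ also has no common zero.

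Next, apply the Hilbert--Mumford criterion. If the orbit were not closed, there would be a one-parameter subgroup $\lambda(t)=\mathrm{diag}(t^{a_1},\dots,t^{a_n})$ (after a change of coordinates) with $\lim_{t\to0}\lambda(t)\cdot\ff=\ff'$ existing and $\ff'$ in a smaller orbit. Moreover, $\ff'$ is fixed by $\lambda$, so its stabilizer is positive-dimensional.

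The key claim is that a tuple with no common zero cannot be fixed by a nontrivial torus $\lambda$. To see this, write the weights $a_1\ge\dots\ge a_n$, with $\sum a_i=0$ and not all zero. A fixed form must be a sum of monomials $\xx^\alpha$ with $\sum a_i\alpha_i=0$. Taking $x_1$ with $a_1>0$, no monomial $x_1^d$ appears in any $f'_\ell$. The aim is then to show that the point $e_1$, or more generally a coordinate point of extreme weight, is a common zero.

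I expect this step to be the obstacle, because a monomial like $x_1^{d-1}x_n$ can have weight zero. If this direct route fails, the fallback is to bypass fixed points and argue by dimension instead.

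\textbf{Fallback via dimension.} The set $U_1\cap U_2$ is open. The set of non-closed orbits inside $U_1$ maps into the lower-dimensional orbit-type strata of the quotient. For generic $\ff$, the fiber $\pi^{-1}(\pi(\ff))$ has dimension exactly $n^2-1$, since $\dim\pi(\text{image})\le \dim\cP-(n^2-1)$ and fiber dimension is minimized generically. A fiber of dimension $n^2-1$ containing the $(n^2-1)$-dimensional orbit $\SL(n)\cdot\ff$ could only strictly contain its closure's boundary orbits. So it suffices to show that the closed orbit in that fiber also has dimension $n^2-1$.

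This follows because that closed orbit lies in $U_1$ (resultant values are preserved), and the finite-stabilizer locus meets every fiber over an open dense subset of the quotient. Hence generically the unique closed orbit is the full-dimensional one, namely $\SL(n)\cdot\ff$ itself.

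Intersecting the three open conditions yields a nonempty open set of stable tuples, which proves the corollary.
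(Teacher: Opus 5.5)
\textbf{Gap: the closed-orbit step is not established.} Your semistability and finite-stabilizer steps are fine. However, you leave the torus claim of your first route unproved, and the fallback does not work.

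The inequality $\dim\pi(\cdot)\le\dim\cP^{d,\dots,d}-(n^2-1)$ only shows that generic fibers of $\pi$ have dimension \emph{at least} $n^2-1$, not exactly $n^2-1$. More seriously, the last inference is a non sequitur. That the finite-stabilizer locus $U_2$ meets a generic fiber only says that $\SL(n)\cdot\ff$ itself has finite stabilizer. It says nothing about the unique closed orbit $\SL(n)\cdot\ff_0\subseteq\overline{\SL(n)\cdot\ff}$, which lies in $U_1$ but not a priori in $U_2$, and may have smaller dimension. The inference as stated fails for $\KK^{\times}$ scaling $\KK$: the generic stabilizer is trivial, yet the closed orbit in the only fiber is $\{0\}$. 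Going from finite generic stabilizer to closed generic orbit is exactly Luna's theorem for semisimple groups. That is what the paper invokes, together with \cref{lem:stabilizer}, in its two-line proof.

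\textbf{The fix is in your first route.} The obstacle you anticipated there is illusory. In coordinates where $\lambda(t)=\mathrm{diag}(t^{a_1},\dots,t^{a_n})$, a tuple $\ff'$ fixed by $\lambda$ contains only monomials $\xx^\alpha$ with $\sum_j a_j\alpha_j=0$. Choose $i$ with $a_i\neq 0$, which exists since $\lambda$ is nontrivial. Then $x_i^d$ has weight $da_i\neq 0$, so it occurs in no $f'_\ell$. Since $x_i^d$ is the only degree-$d$ monomial not vanishing at $e_i$, we get $f'_\ell(e_i)=0$ for all $\ell$. Monomials like $x_1^{d-1}x_n$ are irrelevant whatever their weight, because they vanish at $e_1$.

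Combine this with the Hilbert--Mumford (Richardson--Kempf) degeneration theorem and your observation that limit points in $\overline{\SL(n)\cdot\ff}$ still have no common zero. Your observation holds because invariants are constant on orbit closures and some $\Res(\Lambda\cdot\ff)\neq 0$ by \cref{thm:intro-abramov}. This gives a contradiction, so the orbit is closed, and you can drop the fallback entirely.

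\textbf{Comparison with the paper.} Your repaired argument trades Luna's theorem for the degeneration theorem plus an explicit weight computation. It also proves more: every tuple with no common zero has a closed orbit. By Matsushima's theorem and the same torus argument, such a tuple also has finite stabilizer. So the whole complement of $\cR_{d,\dots,d}$ consists of stable tuples, not just a generic subset.
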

\begin{proof}
A theorem by Luna \cite{Luna-73} (see also \cite[\S II.4.3D, Folgerung, p.142]{Kraft-84}) states for an action of a semisimple group (such as $\SL(n)$) that if the stabilizer of a generic element is finite, then the orbit of a generic element is also closed.
By \cref{lem:stabilizer}, this proves that a generic element is stable.
\end{proof}

We can now prove \cref{thm:git}.
\begin{proof}[Proof of \cref{thm:git}]
We define $\cX=\Proj \KK[\cP^{d,\dots,d}]^{\SL(n)}$.
By \cite[Theorem~4.11]{Hoskins-12}, the inclusion $\KK[\cP^{d,\dots,d}]^{\SL(n)}\xhookrightarrow{} \KK[\cP^{d,\dots,d}]$ induces a surjective map \[
\phi: \PP(\cP^{d,\dots,d})^{ss} \rightarrow \cX
\] on the semistable locus $\PP(\cP^{d,\dots,d})^{ss}$ which satisfies the first property, i.e., if $U\subset\PP(\cP^{d,\dots,d})^{ss}$ is an $\SL(n)$-invariant open (resp. closed) subset, then $\phi(U)$ is open (resp. closed).

Since $\cX$ is a projective variety, two points $[p],[p']\in\cX$ are equal if and only if the homogeneous prime ideals $\mathfrak{p}_{p}, \mathfrak{p}_{p'}$ that correspond to $p,p'$ are equal, i.e., we have $\{F\in\KK[\cX] \text{ homogeneous}\mid F(p)=0\}=\{F\in\KK[\cX]\text{ homogeneous}\mid F(p')=0\}$.
Since $\KK[\cX]=\KK[\cP^{d,\dots,d}]^{\SL(n)}$, the second property follows.

To prove the third property, we recall that if $\ff$ is a system with no common solution, then it is semistable (\cref{cor:semistable}).
Hence, $[\ff]\in \PP(\cP^{d,\dots,d})^{ss}$. 
Moreover, by \cref{prop:resultant_variety}, the systems with a common solution form a proper affine variety $\cR_{d,\dots,d}$ in $\cP^{d,\dots,d}$.
Hence, the set of points $[\ff]$ with $\ff$ not admitting a non-zero solution form an open subset $U$ of $\PP(\cP^{d,\dots,d})^{ss}$.
By \cref{lem:orbit-solution}, $U$ is $\SL(n)$-invariant.

By \cite[Theorem~4.11 (iii)]{Hoskins-12}, the map $\phi$ has the fourth property on the locus of stable elements, i.e., if we define $\cX^s=\phi(\PP(\cP^{d,\dots,d})^s)$, then $\phi^{-1}(\cX^s)=\PP(\cP^{d,\dots,d})^s$ and the induced map $\phi:\PP(\cP^{d,\dots,d})^s\rightarrow\cX^{s}$ is an orbit space.
Since $\PP(\cP^{d,\dots,d})^s$ is an $\SL(n)$-invariant open subset, $\cX^s=\phi(\PP(\cP^{d,\dots,d})^s)$ is an open subset.

Since $\cX$ is the image of an open subset of the projective space, it is irreducible.
Hence, every non-empty open subset of $\cX$ is dense.
In particular, $\cX^s$ is dense in $\cX$.

We now prove that $\dim\cX=\binom{d+n-1}{n-1}s-n^2$.
By \cite[Proposition~5.1]{Hoskins-12}, the stability of  $\ff$ is equivalent to the stability of $[\ff]$, i.e., the stabilizer $\SL_{[\ff]}$ is finite and its orbit $\SL(n)\cdot [\ff]$ is closed in $\cX^{s}$. 
This implies that the dimension of the orbit $\SL(n)\cdot [\ff]$ of a stable point $\ff$ equals $\dim\SL(n)-\dim\SL(n)_{[\ff]}=n^2-1$.
Hence, the dimension of the fiber of a point $p\in\cX^s$ equals $n^2-1$.
Since $\cX^s$ is dense in $\cX$, this implies that the fiber dimension of a generic point in $\cX$ is $n^2-1$.
An application of \cref{thm:harris-dim} to the quasi-projective variety $\PP(\cP^{d,\dots,d})^{ss}$ yields \[
\dim\cX = \dim \PP(\cP^{d,\dots,d}) - (n^2-1) =\binom{d+n-1}{n-1}s- 1 - (n^2-1)=\binom{d+n-1}{n-1}s-n^2.
\] 
\end{proof}

\subsection{Proof of the main theorem}
\label{sec:proof-main}
We can now prove \cref{thm:main-optimal}.
By \cref{thm:git}, we have a surjective map \[
\phi:\PP(\cP^{d,\dots,d})^{ss}\rightarrow \cX 
\] where $\cX=\PP(\cP^{d,\dots,d})\sslash \SL(n)$ is a projective variety and $\PP(\cP^{d,\dots,d})^{ss}$ denotes the semistable locus of $\PP(\cP^{d,\dots,d})$.

\mainthm*

\begin{proof}
It is easy to see that if $f_1,f_2,\dots,f_s$ have a common non-zero solution, then every resultant in \eqref{eq:resultant-system} vanishes. 
Hence, we need to prove for every $\ff=(f_1,\dots,f_s)\in\cP^{d,\dots,d}$ that if $\cZ_{\PP}(f_1,\dots,f_s)=\varnothing$ then there exists a polynomial in \eqref{eq:resultant-system} that does not vanish.

For a tuple $\ff=(f_1,\dots,f_s)\in\cP^{d,\dots,d}$ with $\cZ_{\PP}(f_1,\dots,f_s)=\varnothing$, we consider the map \[
\rho_{\ff} : \KK^{n\times s}\rightarrow \KK[\xx]_d^{\times n},\quad \Lambda\mapsto \Lambda \cdot \ff,
\] where $\Lambda\cdot \ff$ denotes the (square) system \[
\Lambda\cdot \ff = (\Lambda_{11}f_1+\dots+\Lambda_{1s}f_s, \dots, \Lambda_{n1}f_1+\dots+\Lambda_{ns}f_s).
\]
We denote by $\Sigma$ the resultant variety $\Sigma\coloneqq \{(g_1,\dots,g_n)\in\KK[\xx]_d^{\times n}\mid \Res(g_1,\dots,g_n)=0\}$ for square systems, where $\Res$ denotes Macaulay's resultant.
Then, $\Sigma$ has codimension $1$.
Since $f_1,\dots,f_s$ have no non-zero solution, Perron's \cref{thm:perron} implies that there exists $\Lambda\in\KK^{n\times s}$ such that $\cZ_{\PP}(\Lambda\cdot\ff)=\varnothing$.
This implies $\rho_{\ff}(\Lambda)\not\in\Sigma$ so the image of $\rho_{\ff}$ is not fully contained in $\Sigma$.
Thus, 
\begin{equation}
\label{eq:codim}
\forall \ff\in\cP^{d,\dots,d} \text{ with }\cZ_{\PP}(f_1,\dots,f_s)=\varnothing\text{ we have } \codim \rho_{\ff}^{-1}(\Sigma)=1.
\end{equation}

Let $U\subset\PP(\cP^{d,\dots,d})$ denote the set of all points $[\ff]$ with $\cZ_{\PP}(\ff)=\varnothing$, and set $V\coloneqq \phi(U)$.
We claim that $U=\phi^{-1}(V)$:
Assume that there exists $[\ff],[\ff']$ with $[\ff]\in U$ and $\phi([\ff])=\phi([\ff'])$.
We will prove that $[\ff']\in U$.
By \cref{thm:perron}, there exists a $\Lambda$ such that $\Res(\Lambda\cdot \ff)\neq 0$.
By \cref{lem:resultant-invariance}, $\cR(\Lambda\cdot\ff)$ is an invariant so $\cR(\Lambda\cdot\ff')\neq 0$ by \cref{thm:git}.
This proves that $\ff'$ has no common solution so $\ff'\in U$.

By \cref{thm:git} (1) and (3), $V$ is a non-empty open subset of $\cX$ so $V$ is a quasi-projective variety.
Moreover, $\overline{V}=\cX$ since $\cX$ is irreducible.
Consider the incidence variety \[
    \cI_k\coloneqq \left\{\, (\phi(\ff) ,\Lambda_1,\dots,\Lambda_k) \in V \times \left(\KK^{n\times s}\right)^{k} \mid \Res(\Lambda_i\cdot\ff)=0\right\}
    \] and the double fibration
        \[
	\begin{tikzcd}
	& \cI_k \arrow[dl] \arrow[dr] & \\
	V && \left(\KK^{n\times s}\right)^k
	\end{tikzcd}
	\]
given by the canonical projections.
    By \cref{lem:resultant-invariance}, $\Res(\Lambda_i\cdot\ff)$ is an $\SL(n)$-invariant so $\cI_k$ is well-defined by \cref{thm:git} (2).
    By \eqref{eq:codim}, the fibers of the first projection all have codimension $k$.
    Then, \cref{cor:quasi-proj-dim} yields \[
    \dim \cI_k = \dim\overline{V} + kns-k=\dim\cX+kns-k=N-1+kns-k.
    \] 
    Since $\cI_k$ is a quasi-projective variety, its image in $(\KK^{n\times s})^k$ is a constructible set.
    If $k>N-1$, then we have $\dim \cI_k < \dim \left((\KK^{n\times s})^{\times k}\right)=kns$, so the image is contained in a proper affine variety of codimension at least $1$.
    Hence, if $(\Lambda_1,\dots,\Lambda_{N})$ is generic, it is not contained in the image of $\cI_k$.
    This implies that it has the property that $\cZ_{\PP}(f_1,\dots,f_s)=\varnothing\implies\exists i, \Res(\Lambda_i\cdot \ff)\neq 0$.
    This finishes the proof.
\end{proof}

\section{Punctured resultant systems}
\label{sec:multiproj}
In this section, we prove \cref{thm:bivariate-explicit,thm:intro-explicit-multivariate}.
We will rely on the following useful lemma on linear combinations of vectors using coefficients from the Vandermonde matrix:

\begin{lemma}
\label{lem:vandermonde}
Suppose $m, s>0$ and $v_1,\dots,v_s\in\KK^m$ are $s$ vectors.
If $\lambda_1,\dots,\lambda_{s}\in\KK$ are $s$ distinct scalars, then the linear span of $v_1,\dots,v_s$ coincides with the linear span of the vectors
$w_1, \dots, w_s \in \KK^m$, where
 \[
\begin{split}
w_1 &= v_1 + \lambda_1 v_2 + \dots + \lambda_1^{s-1} v_s\\
& \qquad\vdots \\
w_s &= v_1 + \lambda_s v_2 + \dots + \lambda_s^{s-1} v_s.
\end{split}
\]
\end{lemma}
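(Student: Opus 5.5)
The plan is to write the passage from the $v_\ell$ to the $w_i$ as multiplication by a Vandermonde matrix and then use that this matrix is invertible. Let $V\in\KK^{m\times s}$ be the matrix whose columns are $v_1,\dots,v_s$, and let $W\in\KK^{m\times s}$ be the matrix whose columns are $w_1,\dots,w_s$. Let $A\in\KK^{s\times s}$ be the Vandermonde matrix with entries $A_{\ell i}\coloneqq\lambda_i^{\ell-1}$. By definition,
\[
w_i=\sum_{\ell=1}^s\lambda_i^{\ell-1}v_\ell ,
\]
which says exactly that $W=VA$.

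First, every $w_i$ is a linear combination of $v_1,\dots,v_s$. Hence the span of the $w_i$ lies inside the span of the $v_\ell$, and this inclusion holds whether or not the $\lambda_i$ are distinct.

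For the reverse inclusion we show that $A$ is invertible. Its determinant is the Vandermonde determinant
\[
\det A=\prod_{1\leq i<j\leq s}(\lambda_j-\lambda_i),
\]
and this is non-zero because the scalars $\lambda_1,\dots,\lambda_s$ are distinct. We can avoid quoting the determinant formula. Suppose $Ac=0$ for some $c\in\KK^s$. Then the polynomial $\sum_\ell c_\ell t^{\ell-1}$, which has degree at most $s-1$, would vanish at the $s$ distinct points $\lambda_1,\dots,\lambda_s$. It must therefore be the zero polynomial, so $c=0$ and $A$ is invertible.

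Since $A$ is invertible, $V=WA^{-1}$. So each $v_\ell$ is a linear combination of $w_1,\dots,w_s$, and the two spans coincide.

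There is no real obstacle here. The only point needing care is the convention for which index of $A$ is the power and which picks out the scalar. Once this is fixed as above, the identity $W=VA$ is immediate.
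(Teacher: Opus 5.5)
Your argument is correct and matches the paper's, which stacks the $v_\ell$ and $w_i$ as rows, writes $W=XV$ with $X$ the invertible Vandermonde matrix, and concludes that the row spans agree. One harmless slip: in your determinant-free check, the vanishing of $\sum_\ell c_\ell t^{\ell-1}$ at every $\lambda_i$ is the condition $c^{T}A=0$ rather than $Ac=0$, but since $A$ is square, either form of injectivity gives invertibility.
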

\begin{proof}
    Let $W\in\KK^{s\times m}$ be the matrix whose $i$-th row is $w_i^T$ and $V\in\KK^{s\times m}$ be the matrix whose $i$-th row is $v_i^T$.
    Then, we have $W = X V$ where $X=[\lambda_{i}^j]$ ($i=1,\dots,s, j=0,\dots,s-1$) is the Vandermonde matrix.
    Since $\lambda_1,\dots,\lambda_s$ are distinct, $X$ is invertible.
    Hence, $W$ and $V$ have the same row-span and this finishes the proof.
\end{proof}

\subsection{Explicit equations for bivariate systems}
We can now prove \cref{thm:bivariate-explicit}.

\puncturedbivariate*

\begin{proof}
    Set $N\coloneqq (s-2)d+1$.
    Let $\lambda_1,\lambda_2,\dots, \lambda_{N}\in \KK$ be an arbitrary collection of $N$ distinct scalars, and consider the following linear combinations of $f_1,\dots,f_{s-1}$:\[
    \begin{split}
        & f^{\lambda_1}\coloneqq f_1 + \lambda_1 f_2 + \dots + \lambda_1^{s-2} f_{s-1}\\
        &\qquad\vdots\\
        & f^{\lambda_N}\coloneqq f_1 + \lambda_N f_2 + \dots + \lambda_N^{s-2} f_{s-1}.
    \end{split}
    \]
   By \cref{lem:vandermonde}, any subcollection of $s-1$ of these polynomials have the same linear span as $f_1,\dots,f_{s-1}$.
    
    We now claim that the Sylvester resultants \begin{equation}
    \label{eq:bivariate-explicit}
    \Res(f^{\lambda_i}, f_s ), \quad i=1,\dots,N        
    \end{equation}
     form a punctured resultant system.
   If $f_1,\dots,f_s$ have a common non-zero solution $p$, then $f_s(p)=f^{\lambda_i}(p)=0$ so every polynomial in \eqref{eq:bivariate-explicit} vanishes.

    We now prove the opposite implication.
    Since $f_s$ is a non-zero polynomial of degree $d$, it vanishes on at most $d$ points in $\PP^1$, say $\{p_1,\dots,p_k\}\subset\PP^1$ with $k\leq d$.
    Since every resultant in \eqref{eq:bivariate-explicit} vanishes, for every $i=1,\dots,N$ there exists $j=1,\dots,k$ such that $f^{\lambda_i}(p_j)=0$.
    By pigeonhole principle, there exist $s-1$ linear combinations $f^{\lambda_{i_1}},\dots,f^{\lambda_{i_{s-1}}}$ that vanishes on the same root.
    However, by \cref{lem:vandermonde}, these polynomials have the same linear span as $f_1,\dots,f_{s-1}$, so $f_1(p_j)=\dots=f_{s-1}(p_j)=0$.
    Since $p_j$ was already a root of $f_s$, we have that $p_j$ is a common non-zero solution of the original system.
    This finishes the proof.
\end{proof}

\begin{remark}
    Suppose $N\geq s$.
    The \emph{spark} of a matrix $X\in\KK^{s\times N}$ is defined as the smallest integer $k$ such that there exists a set of $k$ columns which are linearly dependent. 
    \cref{lem:vandermonde} implies that the matrix $X_{ij}=[\lambda_j^{i}]$ with $i=0,\dots,s-1$ and  $j=1,\dots,N$ has $\mathrm{spark}(X)=s+1$.
    This is equivalent to non-vanishing of all maximal minors of $X$.
    From the proof of \cref{thm:bivariate-explicit}, it is apparent that any matrix $X$ with full spark can be used to construct a punctured resultant system.
    An example is the block matrix $X=\begin{pmatrix}
        I_s & C
    \end{pmatrix}$
    where $C$ is the \emph{Cauchy matrix}, i.e., $C_{ij}=\frac{1}{x_i-y_j}$ where $x_1,\dots,x_s,y_1,\dots,y_{N-s}$ are distinct scalars.
    Among all matrices with full spark, $X$ has the largest possible number of zero entries.
    Indeed, if an $s\times N$-matrix has more than $s^2-s$ zeros, then it must have a row with $s$ zeros and the corresponding maximal minor vanishes.
\end{remark}

\begin{example}
    We consider the first non-trivial example of $s=3$ quadratic forms in two variables \[
    \begin{split}
            f_1 &= a_0 x^2 + a_1 xy + a_2 y^2\\
            f_2 &= b_0 x^2 + b_1 xy + b_2 y^2\\
            f_3 &= c_0 x^2 + c_1 xy + c_2 y^2.
    \end{split}
    \]
    From the proof of \cref{thm:bivariate-explicit}, we see that the resultants \[
    R_1\coloneqq \Res(f_1, f_3), \; R_2\coloneqq\Res(f_1+f_2,f_3),\; R_3\coloneqq \Res(f_1 - f_2, f_3)
    \] are a punctured resultant system.
    We note that these polynomials do not form a resultant system in the ordinary sense since they always vanish if $f_3=0$.
    A sparser system is $\Res(f_1,f_3), \Res(f_2,f_3)$ and $\Res(f_1+f_2,f_3)$. 
    These polynomials form a punctured resultant system since any pair of $f_1,f_2,f_1+f_2$ has the same linear span as $f_1,f_2$.
\end{example}

\subsection{Explicit equations for general systems}

There is a strategy to generalize the construction for the bivariate case to $n>2$:
If the zero locus of the last $n-1$ polynomials $f_{s-n+1},\dots,f_{s}$ is finite, then it contains at most $d^{n-1}$ points by the B\'{e}zout theorem. 
Hence, if we enforce the vanishing of $(s-n)d^{n-1}+1$ resultants of the form \[
\Res(f^{\lambda_i},f_{s-n+1},\dots,f_s) = 0, \quad i=1,2,\dots,(s-n)d^{n-1}+1
\] for $f^{\lambda_i}\coloneqq f_1 + \lambda_i f_2 + \dots + \lambda_i^{s-n-1}f_{s-n}$, then by pigeonhole principle, $s-n+1$ of the linear combinations necessarily vanish on the same root.
By a similar argument as in the proof of \cref{thm:bivariate-explicit}, we then deduce that $f_1,\dots,f_s$ share a common non-zero root. 

Unfortunately, this strategy fails when the zero locus of $f_{s-n+1},\dots,f_s$ is not finite.
Instead, we will show that if we consider sufficiently many linear combinations, we can enforce that a collection of linear combinations have a zero dimensional solution set, and then we can apply this strategy.

We recall that a variety is said to be pure $m$-dimensional if all of its irreducible components are $m$-dimensional.

\begin{lemma}
\label{prop:dimension-drop}
    Suppose $C\subset\PP^{n-1}$ is pure $m$-dimensional of degree~$D$.
    Let \[
    f^\lambda \coloneqq f_1 + \lambda f_2 + \dots + \lambda^{q-1} f_q
    \] for $q$ forms $f_1,f_2,\dots,f_q$ and $\lambda\in\KK$.
    Suppose:
    \begin{enumerate}
        \item $\dim\left(C\cap\cZ_{\PP}(f_1,f_2,\dots,f_q)\right)<m$;
        \item $C\subset\cZ_{\PP}(f^{\tau_1},f^{\tau_2},\dots,f^{\tau_r})$ for distinct scalars $\tau_1,\dots,\tau_r$;
        \item $S\subset\KK$ is disjoint from $\{\tau_1,\dots,\tau_r\}$ and satisfies \[
        |S|\geq (q-r-1)D+1.
        \]
    \end{enumerate}
    Then some $\lambda\in S$ satisfies \[
    \dim\left(\, C\cap\cZ_{\PP}(f^\lambda) \,\right) = m-1.
    \]
\end{lemma}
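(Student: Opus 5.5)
We argue by contradiction and pigeonhole over the irreducible components of $C$, in the spirit of the proof of \cref{thm:bivariate-explicit}. Write $C=C_1\cup\dots\cup C_t$ for the decomposition into irreducible components. Each $C_j$ has dimension $m$, and since $\sum_j\deg C_j=D$ we have $t\leq D$. For any form $h$ and any irreducible $m$-dimensional $C_j$, either $C_j\subset\cZ_{\PP}(h)$ or $C_j\cap\cZ_{\PP}(h)$ is pure of dimension $m-1$. This is Krull's principal ideal theorem, or the projective dimension theorem, and we use that $m\geq 1$ is implicit, since otherwise the conclusion is vacuous or the argument is adapted. Hence $\dim(C\cap\cZ_{\PP}(f^\lambda))=m-1$ holds as soon as $C\not\subset\cZ_{\PP}(f^\lambda)$, i.e.\ as soon as at least one component $C_j$ is not contained in $\cZ_{\PP}(f^\lambda)$. (If $\lambda$ makes some $C_j\not\subset\cZ_{\PP}(f^\lambda)$, the intersection with that component already has dimension $m-1$, and the whole intersection cannot have dimension $m$ unless some component is contained.)

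Suppose, for contradiction, that for every $\lambda\in S$ we have $C\subset\cZ_{\PP}(f^\lambda)$. Fix one component, say $C_1$. Then $C_1\subset\cZ_{\PP}(f^\lambda)$ for all $\lambda\in S\cup\{\tau_1,\dots,\tau_r\}$. This set has at least $(q-r-1)D+1+r\geq q$ distinct elements, whenever $(q-r-1)D+1\geq q-r$, which holds because $D\geq1$ (and trivially when $q-r-1<0$). By \cref{lem:vandermonde}, any $q$ of the forms $f^{\lambda}$ with distinct $\lambda$ span the same space as $f_1,\dots,f_q$. Hence $C_1\subset\cZ_{\PP}(f_1,\dots,f_q)$, contradicting hypothesis (1), which says $C\cap\cZ_{\PP}(f_1,\dots,f_q)$ has dimension $<m$.

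This crude argument seems to need only $q-r$ elements of $S$, so the stated bound $(q-r-1)D+1$ suggests the intended conclusion is stronger or more delicate. It should presumably hold even when one only asks that some $\lambda$ cuts every component properly, so that $C\cap\cZ_{\PP}(f^\lambda)$ is pure $(m-1)$-dimensional, which is what the iteration later needs. I would therefore aim to prove the stronger statement that some $\lambda\in S$ satisfies $C_j\not\subset\cZ_{\PP}(f^\lambda)$ for every $j$. Say $\lambda$ is bad for $C_j$ if $C_j\subset\cZ_{\PP}(f^\lambda)$. By the Vandermonde argument, and since all $\tau_i$ are bad for every $C_j$ by (2), each $C_j$ admits at most $q-r-1$ bad elements in $S$; otherwise $q$ distinct bad parameters would force $C_j\subset\cZ_{\PP}(f_1,\dots,f_q)$. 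Summing over the $t\leq D$ components, at most $(q-r-1)D$ elements of $S$ are bad for some component. Since $|S|\geq(q-r-1)D+1$, some $\lambda\in S$ is good for all components, and then $C\cap\cZ_{\PP}(f^\lambda)$ is pure of dimension $m-1$ (nonempty when $m\geq1$, since a hypersurface meets any positive-dimensional projective variety).

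The main point requiring care is the dimension claim for a hypersurface section of an irreducible projective variety not contained in it: it is nonempty of pure dimension $m-1$. I would cite the standard result in \cite{Harris-93}. The bound $t\leq D$ follows from the additivity of degree over the components of a pure-dimensional variety.
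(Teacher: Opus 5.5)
Your second argument is correct and is exactly the paper's proof. It shows that each component $C_j$ admits at most $q-r-1$ bad parameters in $S$: otherwise those parameters, together with $\tau_1,\dots,\tau_r$, give $q$ distinct values with $C_j\subset\cZ_{\PP}(f^{\lambda})$, and \cref{lem:vandermonde} forces $C_j\subset\cZ_{\PP}(f_1,\dots,f_q)$, contradicting hypothesis (1). A union bound over the $t\le D$ components then finishes. The paper phrases this last step as pigeonhole by contradiction, which is the same thing.

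Your preliminary ``crude argument'', however, rests on a false reduction. The equality $\dim\bigl(C\cap\cZ_{\PP}(f^\lambda)\bigr)=m-1$ does \emph{not} follow from $C\not\subset\cZ_{\PP}(f^\lambda)$. If $f^\lambda$ vanishes on even one component $C_j$, then $C_j\subset C\cap\cZ_{\PP}(f^\lambda)$ and the intersection has dimension $m$. For instance, take $C$ to be two lines in $\PP^2$ and $f^\lambda$ vanishing on one of them; your own parenthetical remark actually states the correct condition. So the crude argument, using only $q-r$ elements of $S$, does not prove the lemma. The bound $(q-r-1)D+1$ is not slack: finding a $\lambda$ that is good for \emph{every} component is what the statement as written requires, not merely a strengthening for the later iteration.
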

\begin{proof}
    Let $C=C_1 \cup C_2 \cup\dots \cup C_c$ be the irreducible decomposition of $C$. 
    Since $\deg C = \sum_\ell \deg C_\ell = D$, we have $c\leq D$.

    To reach a contradiction, assume that for every $\lambda\in S$ there exists $1\leq \ell\leq c$ such that $f^\lambda$ vanishes identically on $C_\ell$.
    By pigeonhole principle, there exist at least $q-r$ scalars $\lambda_1,\dots,\lambda_{q-r}$ such that $f^{\lambda_1},\dots,f^{\lambda_{q-r}}$ vanish on the same component, say $C_\ell$.
    Together with the linear combinations $\tau_1,\dots,\tau_{r}$, we deduce that $q$ distinct linear combinations vanish on $C_\ell$.
    By \cref{lem:vandermonde}, these linear combinations have the same zero locus as $f_1,\dots,f_q$.
    Hence, $C_\ell\subset\cZ_{\PP}(f_1,\dots,f_q)$.
    This contradicts the assumption that $\dim\left(C\cap \cZ(f_1,\dots,f_q)\right)<m$.

    We proved that there exists a linear combination $f^{\lambda}$ that does not vanish on any component $C_j$, $j=1,\dots,c$.
    Hence, we have $\dim(C\cap \cZ(f^{\lambda}))=m-1$ and this finishes the proof.
\end{proof}

We can now prove \cref{thm:intro-explicit-multivariate}.
\begin{theorem}
\label{thm:explicit-multivariate}
Suppose $S_1,S_2,\dots,S_{n-1}\subset\KK$ are pairwise disjoint sets of scalars of cardinalities \[
\forall \ell=1,2,\dots,n-1, \; |S_{\ell}| = (s-\ell-1)d^{\ell}+1.
\] Then the following collection of polynomials is a punctured resultant system: \begin{equation}
\label{eq:explicit-system-multi}
\Res(f^{\lambda_1}, f^{\lambda_2},\dots, f^{\lambda_{n-1}}, f_s),\quad (\lambda_1,\dots,\lambda_{n-1})\in S_1 \times S_2\times\dots\times S_{n-1},
\end{equation} where we denote $f^{\lambda_i}\coloneqq f_1 + \lambda_i f_2 +\dots + \lambda_i^{s-2} f_{s-1}$.

In particular, there exists a punctured resultant system of cardinality \[
|S_1\times\dots\times S_{n-1}| = \prod_{\ell=1}^{n-1} (s-\ell-1) d^{\ell} + 1.
\] 
\end{theorem}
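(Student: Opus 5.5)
The easy direction is immediate. If $f_1,\dots,f_s$ share a root $p$, then every $f^{\lambda}$ vanishes at $p$, and so does $f_s$. Hence every resultant in \eqref{eq:explicit-system-multi} vanishes.

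For the converse, assume every $f_\ell\neq 0$ and that all the resultants in \eqref{eq:explicit-system-multi} vanish. We build, one step at a time, scalars $\lambda_1\in S_1,\dots,\lambda_{n-1}\in S_{n-1}$ and varieties
\[
C_0\coloneqq\cZ_{\PP}(f_s),\qquad C_\ell\coloneqq C_{\ell-1}\cap \cZ_{\PP}(f^{\lambda_\ell}),
\]
such that each $C_\ell$ is pure of dimension $n-2-\ell$ with $\deg C_\ell\le d^{\ell+1}$. The base case holds because $f_s\neq 0$ makes $C_0$ a hypersurface, pure of dimension $n-2$ and of degree $d$. If the process never stops, then $C_{n-2}$ is a finite set of at most $d^{n-1}$ points. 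We then take the last family $S_{n-1}$, of size $(s-n)d^{n-1}+1$. For every $\lambda\in S_{n-1}$ the resultant $\Res(f^{\lambda_1},\dots,f^{\lambda_{n-2}},f^{\lambda},f_s)$ vanishes, so $f^{\lambda}$ has a zero on $C_{n-2}$. By pigeonhole, $s-n+1$ of the $f^{\lambda}$ vanish at a common point $p\in C_{n-2}$. That point is also a zero of the $n-2$ combinations $f^{\lambda_1},\dots,f^{\lambda_{n-2}}$, because it lies on $C_{n-2}$; and the scalars $\lambda_1,\dots,\lambda_{n-2}$ are distinct from the new ones since the $S_\ell$ are pairwise disjoint. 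This gives $s-1$ distinct Vandermonde combinations vanishing at $p$. By \cref{lem:vandermonde} they span $f_1,\dots,f_{s-1}$, and $p\in\cZ_{\PP}(f_s)$, so $p$ is a common root.

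The inductive step is where \cref{prop:dimension-drop} is used, with $q=s-1$, $C=C_{\ell-1}$ of dimension $m=n-1-\ell$, $D\le d^{\ell}$, the already chosen scalars $\tau=(\lambda_1,\dots,\lambda_{\ell-1})$ so that $r=\ell-1$, and $S=S_\ell$. Its hypothesis (2) holds by construction, since $C_{\ell-1}$ lies in the zero set of the previously chosen combinations. Hypothesis (3) is the size condition: we need $|S_\ell|\ge(s-1-(\ell-1)-1)d^{\ell}+1=(s-\ell-1)d^{\ell}+1$, which is exactly what the statement assumes. This is why the cardinalities in the theorem take the form they do. Hypothesis (1) is the dichotomy. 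If $\dim\bigl(C_{\ell-1}\cap\cZ_{\PP}(f_1,\dots,f_{s-1})\bigr)=m\ge 0$, then that intersection is nonempty and already consists of common roots of all of $f_1,\dots,f_s$, so we stop and are done. Otherwise the lemma supplies $\lambda_\ell\in S_\ell$ with $\dim C_\ell=m-1$, and the degree bound $\deg C_\ell\le d^{\ell+1}$ follows from Bézout.

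The main obstacle is the purity of $C_\ell$. Cutting an equidimensional variety by a hypersurface that contains none of its components gives a pure $(m-1)$-dimensional set when $m\ge1$, by Krull's principal ideal theorem. The proof of \cref{prop:dimension-drop} shows exactly this, namely that $f^{\lambda}$ vanishes on no component, and I would make that explicit. I would also count degrees of components, not with multiplicity, so that the bound $c\le D$ used in the lemma holds for the reduced $C_\ell$.

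It remains to justify that the resultants vanishing forces the zeros claimed above, that is, that when $C_{\ell-1}$ is nonempty and we pass to $\ell$ we only need dimension, not nonemptiness, from the resultants. The resultants are used only at the final step, through the vanishing of $\Res(f^{\lambda_1},\dots,f^{\lambda_{n-1}},f_s)$, which gives a common zero of these $n$ forms in $\PP^{n-1}$, i.e. a point of $C_{n-2}\cap\cZ_{\PP}(f^{\lambda})$. The cardinality claim is then simply $|S_1\times\dots\times S_{n-1}|$.
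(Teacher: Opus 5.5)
Your proposal is correct and follows the paper's proof essentially step for step. You iterate \cref{prop:dimension-drop} on $C_{\ell-1}$ with $q=s-1$, $r=\ell-1$ and $S=S_\ell$, which is exactly where the cardinalities $(s-\ell-1)d^\ell+1$ come from, stopping early if $C_{\ell-1}$ meets $\cZ_{\PP}(f_1,\dots,f_{s-1})$; you then pigeonhole the $(s-n)d^{n-1}+1$ scalars of $S_{n-1}$ over the at most $d^{n-1}$ points of $C_{n-2}$ and conclude with \cref{lem:vandermonde}. Your explicit justification of the purity of $C_\ell$ (via Krull, using that $f^{\lambda_\ell}$ contains no component and $m\ge 1$) and of the degree bound for the reduced intersection fills in details the paper only asserts.
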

\begin{proof}
We need to prove that if every resultant in \eqref{eq:explicit-system-multi} vanishes, then $f_1,\dots,f_s$ have a common non-zero solution.
Since $f_s\neq 0$, the hypersurface $C_0\coloneqq \cZ_{\PP}(f_s)\subset\PP^{n-1}$ has dimension $n-2$ and its degree is bounded by $d$. 
If $\cZ(f_1,\dots,f_{s-1})\cap C_0\neq\varnothing$, then we are done.
Otherwise, an application of \cref{prop:dimension-drop} to $C_0$ and to the polynomials $f_1,\dots,f_{s-1}$ with $q=s-1$ and $r=0$ implies that there exists $\lambda^{(1)}\in S_1$ such that \[
\dim\left(\cZ(f_s,f^{\lambda^{(1)}})\right) = n-3.
    \] 
    Hence, the intersection $C_1\coloneqq\cZ_{\PP}(f_s,f^{\lambda^{(1)}})$ is a pure $(n-3)$-dimensional projective algebraic variety and its degree is bounded by $d^2$ by B\'{e}zout's theorem.
    If $C_1$ intersects $\cZ_{\PP}(f_1,\dots,f_{s-1})$, then we are done. 
    Otherwise, another application of \cref{prop:dimension-drop} to $C_1$ with $q=s-1$ and $r=1$ yields a $\lambda^{(2)}\in S_2$ such that \[
    C_2\coloneqq \cZ_{\PP}(f_s,f^{\lambda^{(1)}},f^{\lambda^{(2)}}),\quad \dim C_2 = n-4.
    \] By iterating this process, we can find points $\lambda^{(i)}\in S_i$ such that \[
    C_{n-2}\coloneqq \cZ_{\PP}(f_s,f^{\lambda^{(1)}},f^{\lambda^{(2)}},\dots,f^{\lambda^{(n-2)}}),\quad \dim C_{n-2} = 0.
    \] By B\'{e}zout's theorem, the cardinality of $C_{n-2}$ is at most $\leq d^{n-1}$.

    By the assumption, the resultant \[
    \Res(f^{\mu},f^{\lambda^{(1)}},f^{\lambda^{(2)}},\dots,f^{\lambda^{(n-2)}},f_s) = 0
    \] vanishes for every $\mu\in S_{n-1}$.
    Hence, for every $\mu\in S_{n-1}$ there exists a point $p\in C_{n-2}$ such that $f^{\mu}$ vanishes on $p$.
    By pigeonhole principle, there exist $\mu_1,\dots,\mu_{s-n+1}\in S_{n-1}$ such that $f^{\mu_1},\dots,f^{\mu_{s-n+1}}$ vanish on the same point $p\in C_{n-2}$. 
    By \cref{lem:vandermonde},
    the polynomials $f^{\mu_1},\dots,f^{\mu_{s-n+1}}, f^{\lambda^{(1)}},\dots,f^{\lambda^{(n-2)}}$ have the same linear span as $f_1,\dots,f_{s-1}$.
    Hence, $p$ is in the common zero locus of $f_1,\dots,f_{s-1},f_s$.
    This finishes the proof.
\end{proof}

\section{Conclusions and outlook}
We have constructed small resultant systems by evaluating the classical Macaulay's resultant on linear combinations of the input forms.
For homogeneous systems of equal degree, this yields a resultant system of cardinality $\binom{d+n-1}{n-1}s-n^2+1$; in the linear case, this specializes to an optimal set-theoretic description of the corresponding determinantal variety.
By assuming that none of the input forms is zero, we have also obtained fully explicit punctured resultant systems, consisting of $(s-2)d+1$ polynomials in the bivariate case and polynomially number of equations for every fixed number of variables in general.
A natural direction for the future work is making the choice of the matrices in \cref{thm:main-optimal} explicit.

\section*{Acknowledgements}
L. D. was supported by the European Union (ERC Grant SYMOPTIC, 101040907) and by the Deutsche Forschungsgemeinschaft (DFG, German Research Foundation, 556164098).
ET is  partially supported by the PGMO grant SOAP, 
the PHC PROCOPE project ``Quantum  games and polynomial optimization'',
and ANR PRC ZADyG (ANR-25-CE48-7058).

\bibliographystyle{amsalpha}
\bibliography{references}

@ARTICLE{Abramov-13,
  title     = {A resultant system as the set of coefficients of a single resultant},
  author    = {Abramov, Ya. V.},
  journal   = {Functional Analysis and Its Applications},
  publisher = {Springer Science and Business Media LLC},
  volume    =  47,
  number    =  3,
  pages     = {233--237},
  month     =  jul,
  year      =  {2013},
  copyright = "https://www.springernature.com/gp/researchers/text-and-data-mining"
}

@article{Encarnacion-98,
	title = {An Efficient Method for Computing Resultant Systems},
	volume = {9},
	issn = {0938-1279, 1432-0622},
	url = {http://link.springer.com/10.1007/s002000050105},
	doi = {10.1007/s002000050105},
	number = {3},
	urldate = {2023-07-13},
	journal = {Applicable Algebra in Engineering, Communication and Computing},
	author = {Encarnaci{\'o}n, M. J.},
	month = nov,
	year = {1998},
	pages = {243--245}
}

@article{Lyubeznik-95,
title = {Minimal Resultant Systems},
journal = {Journal of Algebra},
volume = {177},
number = {2},
pages = {612-616},
year = {1995},
issn = {0021-8693},
doi = {https://doi.org/10.1006/jabr.1995.1315},
url = {https://www.sciencedirect.com/science/article/pii/S0021869385713154},
author = {Lyubeznik, G.}
}

@article{CMSS-25,
title={Polynomial systems admitting a simultaneous solution}, 
author={Conner, A. and Micha{\l}ek, M. and Schindler, M. and Szendr{\H{o}}i, B.},
journal = {Journal of Algebra},
volume = {667},
pages = {412--424},
year = {2025},
issn = {0021-8693},
doi = {https://doi.org/10.1016/j.jalgebra.2024.12.015},
url = {https://www.sciencedirect.com/science/article/pii/S0021869324006902}
}

@article{Kakie-76,
 ISSN = {00029939, 10886826},
 URL = {http://www.jstor.org/stable/2040735},
 author = {Kaki{\'{e}}, K.},
 journal = {Proceedings of the American Mathematical Society},
 number = {1},
 pages = {1--7},
 publisher = {American Mathematical Society},
 title = {The Resultant of Several Homogeneous Polynomials in Two Indeterminates},
 urldate = {2026-05-26},
 volume = {54},
 year = {1976}
}

@article{Orsinger-51,
author = {Orsinger, H.},
title = {Zur {K}onstruktion von {T}r{\"{a}}gheitsformen als {K}oeffizienten algebraischer {G}leichungen},
journal = {Mathematische Nachrichten},
volume = {5},
number = {6},
pages = {355-370},
doi = {https://doi.org/10.1002/mana.19510050604},
url = {https://onlinelibrary.wiley.com/doi/abs/10.1002/mana.19510050604},
eprint = {https://onlinelibrary.wiley.com/doi/pdf/10.1002/mana.19510050604},
year = {1951}
}

@article{Bruns-Schwanzl-90,
    author = {Bruns, W. and Schw{\"{a}}nzl, R.},
    title = {The Number of Equations Defining a Determinantal Variety},
    journal = {Bulletin of the London Mathematical Society},
    volume = {22},
    number = {5},
    pages = {439--445},
    year = {1990},
    month = sep,
    issn = {0024-6093},
    doi = {10.1112/blms/22.5.439},
    url = {https://doi.org/10.1112/blms/22.5.439}
}

@InProceedings{Bruns-89,
author={Bruns, W.},
editor={Hochster, M.
and Huneke, C.
and Sally, J. D.},
title={Additions to the Theory of Algebras with Straightening Law},
booktitle={Commutative Algebra},
year={1989},
publisher={Springer New York},
address={New York, NY},
pages={111--138},
isbn={978-1-4612-3660-3}
}

@book{Kapferer-29,
author  = {Kapferer, H.},
title   = {{\"{U}}ber {R}esultanten und {R}esultanten-{S}ysteme},
year    = {1929},
address = {M\"{u}nchen},
series  = {Sitzungsberichte der mathematisch-naturwissenschaftlichen Abteilung der Bayerischen Akademie der Wissenschaften},
volume  = {1929,12},
publisher = {{V}erlag der {B}ayerischen {A}kademie der {W}issenschaften},
pages = {179--200}
}

@book{vdWaerden-40,
  author    = {van der Waerden, B. L.},
  title     = {Modern Algebra},
  volume     = {2},
  translator = {Theodore J. Benac},
  publisher  = {Frederick Ungar Publishing Co.},
  address    = {New York},
  year       = {1940},
  note        = {Translated from the second revised German edition}
}

@book{Yap-99,
  title     = {Fundamental {P}roblems of {A}lgorithmic {A}lgebra},
  author    = "Yap, C. K.",
  publisher = {Oxford University Press},
  month     =  dec,
  year      =  {1999},
  address   = {New York, NY}
}

@article{Perron-41,
  author          = {Perron, O.},
  title           = {Beweis und {V}ersch{\"a}rfung eines {S}atzes von {K}ronecker},
  journal         = {Mathematische Annalen},
  year            = {1941},
  month           = dec,
  day             = {01},
  volume          = {118},
  number          = {1},
  pages           = {441--448},
  issn            = {1432-1807},
  doi             = {10.1007/BF01487380},
  url             = {https://doi.org/10.1007/BF01487380}
}

@article{Macaulay-02,
author = {Macaulay, F. S.},
title = {Some {F}ormul{\ae} in {E}limination},
journal = {Proceedings of the London Mathematical Society},
volume = {s1-35},
number = {1},
pages = {3--27},
doi = {https://doi.org/10.1112/plms/s1-35.1.3},
url = {https://londmathsoc.onlinelibrary.wiley.com/doi/abs/10.1112/plms/s1-35.1.3},
eprint = {https://londmathsoc.onlinelibrary.wiley.com/doi/pdf/10.1112/plms/s1-35.1.3},
year = {1902}
}

@book{Cox-Little-Oshea-05,
  title     = "Using algebraic geometry",
  author    = {Cox, D. A. and Little, J. B. and O'Shea, D.},
  publisher = {Springer},
  series    = {Graduate Texts in Mathematics},
  edition   =  {2nd},
  month     =  mar,
  year      =  {2005},
  address   = {New York, NY}
}

@book{Hodge-Pedoe-94, 
place={Cambridge}, 
series={Cambridge Mathematical Library}, 
title={Methods of Algebraic Geometry}, 
publisher={Cambridge University Press}, 
author={Hodge, W. V. D. and Pedoe, D.}, 
year={1994}, 
collection={Cambridge Mathematical Library}
}

@book{Macaulay-16,
  author    = {Macaulay, F. S.},
  title     = {The Algebraic Theory of Modular Systems},
  publisher = {Cambridge University Press},
  address   = {Cambridge},
  year      = {1916}
}

@article{Sylvester-40,
  author  = {Sylvester, J. J.},
  title   = {A method of determining by mere inspection the derivatives from two equations of any degree},
  journal = {The London, Edinburgh, and Dublin Philosophical Magazine and Journal of Science},
  volume  = {16},
  year    = {1840},
  pages   = {132--135}
}

@book{Harris-93,
  series          = {Graduate texts in mathematics},
  publisher       = {Springer},
  isbn            = 0387977163,
  year            = {1993},
  title           = {Algebraic geometry : a first course},
  edition         = {Corr. 2nd printing},
  address         = {New York u.a.},
  author          = {Harris, J.},
}

@book{Lang-12,
  title     = {Algebra},
  author    = {Lang, S.},
  publisher = {Springer},
  series    = {Graduate Texts in Mathematics},
  edition   =  {3rd},
  month     =  nov,
  year      =  {2002},
  address   = {New York, NY}
}

@article{Nie-10,
  title={An exact {J}acobian {SDP} relaxation for polynomial optimization},
  author={Nie, J.},
  journal={Mathematical Programming},
  year={2013},
  volume={137},
  pages={225--255}
}

@book{MFK-94,
    author = {Mumford, D. and Fogarty, J. and Kirwan, F.},
     title = {Geometric invariant theory},
    series = {Ergebnisse der Mathematik und ihrer Grenzgebiete (2) [Results
              in Mathematics and Related Areas (2)]},
    volume = {34},
   edition = {3rd},
publisher = {Springer-Verlag, Berlin},
      year = {1994},
       DOI = {10.1007/978-3-642-57916-5},
       URL = {https://doi.org/10.1007/978-3-642-57916-5},
}

@article{BI-17,
title = {Fundamental invariants of orbit closures},
journal = {Journal of Algebra},
volume = {477},
pages = {390--434},
year = {2017},
issn = {0021-8693},
doi = {https://doi.org/10.1016/j.jalgebra.2016.12.035},
url = {https://www.sciencedirect.com/science/article/pii/S0021869317300327},
author = {B\"{u}rgisser, P. and Ikenmeyer, C.}
}

@book{Shafarevich-13,
  title     = {Basic algebraic geometry 1},
  author    = {Shafarevich, I. R.},
  publisher = {Springer},
  edition   =  {3rd},
  month     =  aug,
  year      =  {2013},
  address   = "Berlin, Germany",
  copyright = "https://www.springernature.com/gp/researchers/text-and-data-mining"
}

@misc{Hoskins-12,
title = {Geometric invariant theory and symplectic quotients},
author = {Hoskins, V.},
year = {2012},
howpublished = {\url{https://userpage.fu-berlin.de/hoskins/GITnotes.pdf}},
note = "[Online; accessed 10-June-2026]"
}

@incollection{Luna-73,
     author = {Luna, D.},
     title = {Slices {\'{e}}tales},
     booktitle = {Sur les groupes alg{\'e}briques},
     series = {Bulletin de la Soci{\'e}t{\'e} math{\'e}matique de France. M{\'e}moire},
     pages = {81--105},
     year = {1973},
     publisher = {Soci{\'e}t{\'e} math{\'e}matique de France},
     number = {33},
     doi = {10.24033/msmf.110},
     zbl = {0286.14014},
     url = {https://www.numdam.org/articles/10.24033/msmf.110/}
}

@book{Kraft-84,
  title     = {Geometrische {M}ethoden in der {I}nvariantentheorie},
  author    = {Kraft, H.},
  publisher = {Springer Vieweg},
  series    = {Aspects of Mathematics},
  month     =  jan,
  edition = {1st},
  year      =  {1984},
  address   = {Wiesbaden, Germany}
}

@book{Cox-Little-Oshea-25,
  title     = {Ideals, varieties, and algorithms},
  author    = {Cox, D. A. and Little, J. B. and O'Shea, D.},
  publisher = {Springer International Publishing},
  series    = {Undergraduate Texts in Mathematics},
  edition   =  {5th},
  month     =  aug,
  year      =  {2025},
  address   = {Cham, Switzerland},
  copyright = "https://www.springernature.com/gp/researchers/text-and-data-mining"
}

@article{Jouanolou-91,
title = {Le formalisme du r{\'e}sultant},
journal = {Advances in Mathematics},
volume = {90},
number = {2},
pages = {117--263},
year = {1991},
issn = {0001-8708},
doi = {https://doi.org/10.1016/0001-8708(91)90031-2},
url = {https://www.sciencedirect.com/science/article/pii/0001870891900312},
author = {Jouanolou, J.-P.}
}

@article{Matsumura-Monsky-63,
  title     = {On the automorphisms of hypersurfaces},
  author    = {Matsumura, H. and Monsky, P.},
  journal   = {Kyoto J. Math.},
  publisher = {Duke University Press},
  volume    =  {3},
  number    =  {3},
  pages     = {347--361},
  month     =  jan,
  year      =  {1963}
}

@techreport{Chistov-Grigoriev-83,
  title={Sub--exponential time solving systems of algebraic equations {I}, {II}},
  author={Chistov, A. L. and Grigoriev, D. Yu.},
  year={1983},
  institution={LOMI preprint E-9-83, E-10-83, Steklov Institute, Leningrad}
}

@inproceedings{Koiran-00, 
author = {Koiran, P.}, 
title = {Circuits versus Trees in Algebraic Complexity}, 
year = {2000}, 
isbn = {3540671412}, 
publisher = {Springer-Verlag}, 
address = {Berlin, Heidelberg}, booktitle = {Proceedings of the 17th Annual Symposium on Theoretical Aspects of Computer Science}, 
pages = {35--52}, 
numpages = {18}, 
series = {STACS '00} 
}

@article{Jouanolou-80,
  author  = {Jouanolou, J.-P.},
  title   = {Id{\'e}aux r{\'e}sultants},
  journal = {Advances in Mathematics},
  volume  = {37},
  year    = {1980},
  pages   = {212--238}
}

@misc{Buse-21,
  author = {Bus{\'e}, Laurent},
  title  = {Computational Algebraic Geometry},
  note   = {Master 2 lecture notes, Universit{\'e} C{\^o}te d'Azur and Inria},
  year   = {2021},
  url    = {https://inria.hal.science/hal-03708355}
}

\end{document}